\newtheorem*{thm*}{Theorem}
\newtheorem{thm}{Theorem}[section]{\bf}{\it}
\newtheorem{prop}[thm]{Proposition}
\newtheorem{lemma}[thm]{Lemma}
\newtheorem{cor}[thm]{Corollary}
\theoremstyle{definition}
\newtheorem{dfn}[thm]{Definition}
\theoremstyle{remark}
\newtheorem{rmk}[thm]{Remark}
\theoremstyle{remark}
\newtheorem{exm}[thm]{Example}
\newtheorem{assu}[thm]{Assumption}
\newcommand{\A}{\mathbb{A}}
\newcommand{\B}{\mathbb{B}}
\newcommand{\C}{\mathbb{C}}
\newcommand{\F}{\mathbb{F}}
\newcommand{\G}{\mathbb{G}}
\newcommand{\LL}{\mathbb{L}}
\newcommand{\Q}{\mathbb{Q}}
\newcommand{\RR}{\mathbb{R}}
\newcommand{\R}{\mathbb{R}}
\newcommand{\T}{\mathbb{T}}
\newcommand{\Z}{\mathbb{Z}}
\newcommand{\cat}{\mathbf{C}}
\newcommand{\catD}{\mathbf{D}}
\newcommand{\catT}{\mathbf{T}}
\newcommand{\ra}{\rightarrow}
\newcommand{\mcB}{\mathcal{B}}
\newcommand{\mcF}{\mathcal{F}}
\newcommand{\mcO}{\mathcal{O}}
\newcommand{\mcU}{\mathcal{U}}
\newcommand{\mfB}{\mathfrak{B}}
\newcommand{\mfG}{\mathfrak{G}}
\newcommand{\mfT}{\mathfrak{T}}
\newcommand{\mfX}{\mathfrak{X}}
\newcommand{\mfY}{\mathfrak{Y}}
\newcommand{\adj}[4]{#1\negmedspace: #2\rightleftarrows #3:\negmedspace #4}
\DeclareMathOperator{\an}{an}
\DeclareMathOperator{\Aff}{Aff}
\DeclareMathOperator{\br}{br}
\DeclareMathOperator{\car}{char}
\DeclareMathOperator{\cp}{cp}
\DeclareMathOperator{\dR}{dR}
\DeclareMathOperator{\eff}{eff}
\DeclareMathOperator{\et}{\acute{e}t}
\DeclareMathOperator{\Et}{Et}
\DeclareMathOperator{\FormDA}{\bf{FormDA}}
\DeclareMathOperator{\FormDM}{\bf{FormDM}}
\DeclareMathOperator{\Frac}{Frac}
\DeclareMathOperator{\Frob}{Frob}
\DeclareMathOperator{\Frobet}{Frob\acute{e}t}
\DeclareMathOperator{\Gal}{Gal}
\DeclareMathOperator{\gr}{gr}
\DeclareMathOperator{\Hom}{Hom}
\DeclareMathOperator{\id}{id}
\DeclareMathOperator{\perf}{{perf}}
\DeclareMathOperator{\Perf}{Perf}
\DeclareMathOperator{\PerfSm}{PerfSm}
\DeclareMathOperator{\sPerfSm}{sPerfSm}
\DeclareMathOperator{\rig}{rig}
\DeclareMathOperator{\Rig}{Rig}
\DeclareMathOperator{\RigCor}{RigCor}
\DeclareMathOperator{\RigSm}{RigSm}
\DeclareMathOperator{\Sing}{Sing}
\DeclareMathOperator{\Sm}{Sm}
\DeclareMathOperator{\Spa}{Spa}
\DeclareMathOperator{\Spec}{Spec}
\DeclareMathOperator{\Spf}{Spf}
\DeclareMathOperator{\tr}{tr}
\DeclareMathOperator{\Ch}{\bf{Ch}}
\DeclareMathOperator{\DA}{\bf{DA}}
\DeclareMathOperator{\DM}{\bf{DM}}
\DeclareMathOperator{\FormSm}{{FormSm}}
\DeclareMathOperator{\PerfDA}{\bf{PerfDA}}
\DeclareMathOperator{\sPerfDA}{\bf{sPerfDA}}
\DeclareMathOperator{\PerfDM}{\bf{PerfDM}}
\DeclareMathOperator{\RigDA}{\bf{RigDA}}
\DeclareMathOperator{\RigDM}{\bf{RigDM}}
\DeclareMathOperator{\Psh}{\bf{Psh}}
\DeclareMathOperator{\PST}{\bf{PST}}
\DeclareMathOperator{\Sh}{\bf{Sh}}
\DeclareMathOperator{\Un}{{\bf U}^n\!}
\DeclareMathOperator{\UU}{{\bf U}^1\!}
\begin{document}
\title{Rigid cohomology via the tilting equivalence} 
\author{Alberto Vezzani}
\address{LAGA - Universit\'e Paris 13, Sorbonne Paris Cit\'e, 
	99 av. Jean-Baptiste Cl\'ement, 93430 Villetaneuse, France}

\email{vezzani@math.univ-paris13.fr}

\begin{abstract}
We define a de Rham cohomology theory for analytic varieties over a valued field $K^\flat$ of equal characteristic $p$ with coefficients in a chosen untilt of the perfection of $K^\flat$ by means of the motivic version of Scholze's tilting equivalence. We show that this definition generalizes the usual rigid cohomology in case the variety has good reduction. We also prove a conjecture of Ayoub yielding an equivalence between rigid analytic motives with good reduction and unipotent algebraic motives over the residue field, also in mixed characteristic.
\end{abstract}

\maketitle

\tableofcontents

\section{Introduction}

Rigid cohomology can be considered as a substitute of the pathological (at least for non-proper varieties) de Rham cohomology over a field $k$ of positive characteristic. Its definition is based on the idea of associating to a variety $\bar{X}/k$  another variety $X$ over a field $K$ of characteristic $0$ and then considering its well-behaved de Rham cohomology.

The classic path to reach this goal, elaborating on the work of Monsky and Washnitzer, is to find a smooth formal model $\mfX$ of $\bar{X}$ over a valuation ring $\mcO_K$ of mixed characteristic, then to consider its (rigid analytic) generic fiber $X$. %

From Fontaine's and Scholze's work in $p$-adic Hodge theory, we are now accustomed with another strategy to change characteristic: the so-called \emph{tilting equivalence}. This equivalence is built between perfectoid spaces (a certain kind of adic spaces, see \cite{scholze}) over a perfect,  non-archimedean field $K^\flat$  of characteristic $p$, and perfectoid spaces over a fixed \emph{untilt} $K$ of $K^\flat$. We recall that such \emph{untilts} are perfectoid fields (that is, complete with respect to a non-discrete valuation of rank $1$, with  residue characteristic equal to $p>0$ and such that the Frobenius map is surjective over $\mcO_K/p$)  of mixed characteristic, have the same absolute Galois group as $K^\flat$ and are parametrized by principal ideals of $W(\mcO_{K^\flat})$ generated by a primitive element of degree one (see \cite{fontaine-bourbaki} and  \cite{scholze}). Needless to say, this perfectoid framework of Scholze has been reaping many interesting results in $p$-adic Hodge theory, and in arithmetic geometry in general (see \cite{scholze-icm}).

An alternative method for making $\bar{X}$  ``change characteristic'' is then at reach: we can base change  $\bar{X}$ to a perfectoid field $K^\flat$ then we can take its analytification, followed by its perfection. This defines a perfectoid space $\widehat{X}^\flat$ over $K^\flat$ that we can finally switch (by Scholze's equivalence) to a perfectoid space $\widehat{X}$ over a chosen untilt $K$ of $K^\flat$ having characteristic $0$. 

The initial problem, namely the definition of a de Rham cohomology for $\bar{X}$, is not yet solved with the above procedures. Indeed, for  rigid analytic varieties as well as for perfectoid spaces, the de Rham complex is still problematic (its cohomology groups can be oddly infinite-dimensional for smooth, affinoid rigid analyitic varieties).  
Nonetheless, the results of \cite{gk-dR} and \cite{vezz-fw}  show that   some natural de Rham cohomology groups for both smooth rigid analytic varieties as well as smooth perfectoid spaces can truly be defined.

One aim of this paper is that the two recipes mentioned above are actually equivalent. We now rephrase this statement in terms of motives. This allows for a more precise result, and further corollaries that we examine afterwards.

Out of the categories of smooth varieties over a field, smooth rigid analytic varieties over a non-archimedean field, smooth perfectoid spaces over a perfectoid field, or smooth formal schemes over a valuation ring, one can construct the associated category of motives, written as $\DM(K)$ (or $\RigDM(K)$, $\PerfDM(K)$ and $\FormDM(\mcO_K)$ in the various cases). We consider here derived, effective motives with rational coefficients following Voevodsky. In particular, these categories are Verdier quotients of the classical derived categories of \'etale sheaves with $\Q$-coefficients defined on each big site.

The special fiber and the generic fiber functors on varieties induce some functors also at the level of motives:
\[
\xi\colon \DM(k)\stackrel{\sim}{\leftarrow}\FormDM(\mcO_K)\ra\RigDM(K)
\]
(the first one has a natural quasi-inverse as proven in \cite{ayoub-rig}). The composition from left to right is then the motivic version of the first recipe (Monsky-Washnitzer's) sketched above.

Suppose now $K$ is perfectoid. It is possible to rephrase Scholze's tilting equivalence in motivic terms by saying that the categories $\PerfDM(K)$ and $\PerfDM(K^\flat)$ are equivalent, where $K^\flat$ is its unique \emph{tilt} in characteristic $p$ (see \cite{scholze}). In \cite{vezz-fw} we descended this result to the rigid analytic situation, by proving the following:

\begin{thm*}[{\cite{vezz-fw}}]Let $K$ be a perfectoid field. 
	There is a canonical monoidal, triangulated equivalence 
	$$\mfG 
	\colon
	 \RigDM(K)\cong\PerfDM(K).$$
	In particular, for any perfectoid field $K$ of mixed characteristic, we obtain a canonical monoidal, triangulated equivalence 
	$$
	\mfG_{K,K^\flat}\colon \RigDM(K)\cong\RigDM(K^\flat).
	$$
\end{thm*}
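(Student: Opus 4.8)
The plan is to factor the desired equivalence through the category $\PerfDM(K)$ of motives of smooth perfectoid spaces over $K$, and to establish two independent equivalences: \textbf{(I)} a ``perfectoidification'' equivalence $\mfG\colon\RigDM(K)\cong\PerfDM(K)$ valid for \emph{every} perfectoid field $K$, and \textbf{(II)} the motivic tilting equivalence $\PerfDM(K)\cong\PerfDM(K^\flat)$. Granting both, the last assertion is immediate: if $K$ has mixed characteristic then its tilt $K^\flat$ is a perfectoid field of characteristic $p$, and one sets $\mfG_{K,K^\flat}$ to be the composite $\RigDM(K)\overset{\mathrm{(I)}}{\cong}\PerfDM(K)\overset{\mathrm{(II)}}{\cong}\PerfDM(K^\flat)\overset{\mathrm{(I)}}{\cong}\RigDM(K^\flat)$. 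All functors involved are built as derived left Kan extensions of symmetric monoidal functors of sites followed by Bousfield localizations, so monoidality and the triangulated structure are preserved automatically; I will only need to track these properties through steps (I) and (II).

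Step (II) is essentially formal. Scholze's tilting equivalence identifies the big \'etale sites of perfectoid spaces over $K$ and over $K^\flat$ compatibly with fibre products and with the ``closed ball'' objects used to form the $\B^1$-localization on each side. I would transport the projective model structure along this equivalence of sites, check that the two localizing classes --- the $\B^1$-homotopies and the \'etale hypercovers --- correspond, and conclude that the homotopy categories, hence $\PerfDM(K)$ and $\PerfDM(K^\flat)$, are equivalent as monoidal triangulated categories.

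For step (I) in equal characteristic $p$, the relative Frobenius exhibits every smooth rigid analytic variety $X/K$ as the source of a cofiltered system whose limit is the smooth perfectoid space $\widehat X^{\perf}$, so perfection defines a functor $\RigSm(K)\to\PerfSm(K)$ and, after localization, a functor $\mfG\colon\RigDM(K)\to\PerfDM(K)$. To see it is an equivalence I would present $\PerfSm(K)$ as the $2$-colimit of the tower $\RigSm(K)\xrightarrow{(-)^{(1/p)}}\RigSm(K)\to\cdots$, deduce that $\PerfDM(K)$ is the filtered colimit of the corresponding tower of motive categories whose transition functor is pullback along the relative Frobenius, and then invoke the key lemma that \emph{this pullback is already an autoequivalence of $\RigDM(K,\Q)$}. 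The lemma reduces to the standard fact that the relative Frobenius, being a universal homeomorphism, induces an equivalence on \'etale motives with rational coefficients; since a filtered colimit of categories along an equivalence is canonically the original category, $\mfG$ is an equivalence.

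The hard part will be step (I) in mixed characteristic, where there is no Frobenius to perfect along. Here the plan is to replace the Frobenius tower by a toric one: after adjoining compatible $p$-power roots of unity, the perfectoid torus is $\widehat{\T}^{\,n}_K=\lim_m\T^n_K$ with transition maps the $p$-power map on coordinates, which in characteristic $0$ is \emph{finite \'etale}. Using that $\RigDM(K)$ is generated by motives of smooth affinoids carrying \'etale coordinates, I would define $\mfG$ on such a chart $X\to\T^n_K$ by perfectoidification along it, $X\mapsto X\times_{\T^n_K}\widehat{\T}^{\,n}_K$, and then extend by left Kan extension and localization. Two obstacles remain. First, one must show this is independent of the chart in $\DM$: this comes down to the invertibility with rational coefficients of the pullback along a $p$-power map of tori --- on $M(\T^n)$ it acts by a matrix whose determinant is a power of $p$, hence invertible over $\Q$ --- together with an \'etale-descent argument to globalize. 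Second, one must identify $\mfG$ with the equivalence of step (I) in characteristic $p$ under the tilting equivalence (II), i.e.\ prove that the square relating $\RigDM(K)$, $\PerfDM(K)$, $\PerfDM(K^\flat)$ and $\RigDM(K^\flat)$ commutes; once that compatibility is in place, $\mfG$ is forced to be an equivalence because the other three sides are, and its quasi-inverse is then monoidal and triangulated for the same formal reasons.
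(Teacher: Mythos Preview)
The theorem is quoted from \cite{vezz-fw}; the present paper does not reprove it but does reveal the actual construction: $\mfG=\RR j_*\circ\LL\iota^*$, where $\iota\colon\RigSm/K\hookrightarrow\sPerfSm/K$ and $j\colon\PerfSm/K\hookrightarrow\sPerfSm/K$ are the inclusions into the auxiliary category of \emph{semi-perfectoid} spaces (Definition~\ref{defsemip}, Theorems~\ref{derij} and~\ref{mottilteq}). Your Step~(II) is correct and is exactly what is used. Your Step~(I) in characteristic $p$ is also essentially right: in that case $\RR j_*\LL\iota^*$ is canonically identified with $\LL\Perf^*$ (see Theorem~\ref{premain}(1)), and the Frobenius-tower argument you sketch is the mechanism behind that identification.

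The genuine divergence, and the gap, is in your Step~(I) for mixed characteristic. The paper's route avoids charts entirely: $\mfG=\RR j_*\LL\iota^*$ is a single categorical composite, and the hard work in \cite{vezz-fw} is to prove directly that $\LL\iota^*\Lambda(X_0)\cong\Lambda(X_0\times_{\T^N}\widehat{\T}^N)$ in $\sPerfDA^{\eff}_{\widehat{\B}^1}(K)$ for any chart $X_0\to\T^N$ (this is the content cited in the proof of Theorem~\ref{premain} as \cite[Proposition~5.4 and Corollary~7.15]{vezz-fw}). Chart-independence is then automatic, because $\mfG$ was never defined via a chart. By contrast, you \emph{define} $\mfG$ chart-by-chart and must then prove independence; your proposed argument---that $[p]^*$ on $M(\T^n)$ is an invertible matrix over $\Q$---only tells you that the transition maps in the tower $X_h=X_0\times_{\T^N,\varphi_h}\T^N$ are split injections on motives, not that two different perfectoidifications of the same $X_0$ along two unrelated charts agree. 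Establishing that equality is essentially equivalent to the cited technical core of \cite{vezz-fw}, and it does not follow from \'etale descent plus the torus computation alone.

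There is also a logical problem in your final ``2-out-of-3'' step. You write that once the square relating $\RigDM(K)$, $\PerfDM(K)$, $\PerfDM(K^\flat)$, $\RigDM(K^\flat)$ commutes, $\mfG$ is forced to be an equivalence because the other three sides are. But there is no fourth side: you have $\mfG_K$, the tilting equivalence, and $\mfG_{K^\flat}$, which form a zig-zag, not a square. To close it you would need a functor $\RigDM(K)\to\RigDM(K^\flat)$ independent of $\mfG_K$, and that is precisely what is being constructed. So the bootstrapping is circular as stated. The actual proof in \cite{vezz-fw} proves that $\mfG_K$ is an equivalence \emph{intrinsically} over any perfectoid $K$, without appeal to the characteristic-$p$ case; the tilting equivalence is only composed in afterwards.
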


We can then consider the following functors
\[
\xi^\flat\colon\DM(k)\ra\DM(K^\flat)\stackrel{\Rig^*}{\ra}\RigDM(K^\flat)\cong\RigDM(K)
\]
and their composition corresponds to the second recipe that we sketched above. We then prove (see Theorem \ref{main} and Proposition \ref{chiflat}):
\begin{thm*}
Let $K$ be a perfectoid field. The following diagram commutes, up to an invertible natural transformation.
		\[
		\xymatrix{
			&\DM(k)\ar[dl]_{\xi}\ar[dr]^{\xi^\flat}\\
			\RigDM(K) 
			\ar@{<->}[rr]^{\sim}_{\mfG_{K,K^\flat}}&&\RigDM(K^\flat) 
		}
		\] 
\end{thm*}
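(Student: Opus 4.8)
The plan is to reduce the asserted commutativity to a statement about generic fibers of smooth formal schemes, and then to extract that statement from the integral refinement of Scholze's tilting, namely the isomorphism $\mcO_K/\varpi\cong\mcO_{K^\flat}/\varpi^\flat$ and its perfectoid enhancement. \emph{Reduction to generators.} The functors $\xi$, $\xi^\flat$ and $\mfG_{K,K^\flat}^{\pm1}$ are triangulated and preserve small sums, and $\DM(k)$ is generated as a localizing subcategory by the motives $M(\bar X)$ of $\bar X\in\AffSm_k$; so it suffices to produce an isomorphism $\mfG_{K,K^\flat}\bigl(\xi(M(\bar X))\bigr)\cong\xi^\flat(M(\bar X))$, natural in $\bar X$. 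Such a natural isomorphism of functors $\AffSm_k\to\RigDM(K^\flat)$ extends to all of $\DM(k)$ by the universal property of $\DM(k)$ as the $\B^1$-type localization of the derived category of \'etale sheaves on $\Sm_k$, both composites preserving colimits and inverting the defining acyclic maps; monoidality is then automatic on the tensor-generating family $\{M(\bar X)\}$.

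\emph{Formal models on both sides.} For $\bar X\in\AffSm_k$ pick a smooth affine formal $\mcO_K$-scheme $\mfX$ with special fiber $\bar X$; these exist by the deformation theory of smooth affine schemes, and the choice becomes immaterial once one passes through the equivalence $\FormDM(\mcO_K)\simeq\DM(k)$ of \cite{ayoub-rig}, so that $\xi(M(\bar X))\cong M(\mfX_\eta)$ in $\RigDM(K)$. On the other side, the residue field $k$ of the perfectoid field $K^\flat$ is perfect, hence carries a canonical ring embedding $k\hookrightarrow\mcO_{K^\flat}$, and the $\varpi^\flat$-adic completion $\mfX'$ of $\bar X\times_k\mcO_{K^\flat}$ is a smooth formal $\mcO_{K^\flat}$-scheme with special fiber $\bar X$; comparing $\Rig^*$ with generic fibers of formal models (cf. Proposition \ref{chiflat}) yields $\xi^\flat(M(\bar X))\cong M(\mfX'_\eta)$ in $\RigDM(K^\flat)$. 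The claim is thereby reduced to the following: for smooth formal schemes $\mfX/\mcO_K$ and $\mfX'/\mcO_{K^\flat}$ with a fixed common special fiber there is a natural isomorphism $\mfG_{K,K^\flat}\bigl(M(\mfX_\eta)\bigr)\cong M(\mfX'_\eta)$ in $\RigDM(K^\flat)$.

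\emph{Passage through perfectoid spaces.} Writing $\mfG_{K,K^\flat}$ as the composite $\RigDM(K)\xrightarrow[\sim]{\mfG}\PerfDM(K)\xrightarrow[\sim]{(-)^\flat}\PerfDM(K^\flat)\xrightarrow[\sim]{\mfG^{-1}}\RigDM(K^\flat)$ of \cite{vezz-fw} and using that $\mfG$ is induced by perfectoidization $Y\mapsto\widehat Y$, the problem becomes the identification of $(\widehat{\mfX_\eta})^\flat$ with $\widehat{\mfX'_\eta}$ in $\PerfDM(K^\flat)$. Both $\widehat{\mfX_\eta}$ and $\widehat{\mfX'_\eta}$ are perfectoid spaces \emph{with good reduction}, and their reduced special fibers over $k$ are canonically isomorphic — to the perfection of $\bar X$ — independently of the chosen lifts, since perfectoidization at the integral level only adjoins $p$-power roots of coordinates and alters the special fiber by a universal homeomorphism. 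As Scholze's tilting is, on integral models, precisely reduction modulo a pseudo-uniformizer (through $\mcO_K/\varpi\cong\mcO_{K^\flat}/\varpi^\flat$), this matching of reduced special fibers propagates to an isomorphism $(\widehat{\mfX_\eta})^\flat\cong\widehat{\mfX'_\eta}$ of motives in $\PerfDM(K^\flat)$ — this is the point where a good-reduction comparison for motives of perfectoid spaces is invoked. The isomorphism is canonical in $\bar X$, the formal models $\mfX,\mfX'$ having been absorbed into the $\FormDM$-equivalences exactly as in the definitions of $\xi$ and $\xi^\flat$. Applying $\mfG^{-1}$ and retracing the previous step gives the commutative diagram, monoidally because it is so on $\{M(\bar X)\}$, all the functors in sight ($\mfG$, the tilt, the generic-fiber functors, $\Rig^*$, base change) being monoidal.

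\emph{Main obstacle.} The crux is this last step: reconciling the intrinsically motivic construction of $\mfG$ in \cite{vezz-fw} — built from a universal property and a localization rather than from formal models — with the reduction-preserving nature of the tilt. Concretely one must verify that $\mfG$ sends the motive of a rigid variety with good reduction to the motive of the perfectoid space whose canonical integral model is the completed perfection of the chosen smooth model, and establish the good-reduction comparison for perfectoid motives; moreover the mod-$\varpi$ reductions of the integral models of $\mfX$ and $\mfX'$ are not literally isomorphic for arbitrary lifts, only after perfectoidization, so the whole argument has to be carried out at the level of motives, via $\B^1$-invariance and \'etale descent, rather than at the level of spaces.
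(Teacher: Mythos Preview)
Your overall strategy --- reduce to generators, pass to the perfectoid side, and exploit that tilting is compatible with reduction modulo a pseudo-uniformizer --- matches the paper's (Proposition~\ref{premain2} is exactly your integral-tilting triangle for $\PerfDA$). The genuine gap is in how the natural transformation is produced.

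You propose to build isomorphisms $\mfG_{K,K^\flat}\bigl(\xi(M(\bar X))\bigr)\cong\xi^\flat(M(\bar X))$ object by object and then extend by the universal property of $\DM(k)$. But to invoke such a universal property you already need a natural transformation of functors on $\AffSm_k$, and your candidate isomorphisms pass through a non-functorial choice of lift $\mfX$ and, more seriously, through the assertion that ``$\mfG$ is induced by perfectoidization $Y\mapsto\widehat Y$''. That assertion is incorrect: $\mfG=\RR j_*\circ\LL\iota^*$ is not the derived functor of any geometric construction on rigid varieties, and even on motives of good reduction its values are only identified \emph{a posteriori}, via the explicit computations of \cite{vezz-fw}. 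This is precisely the content of Theorem~\ref{premain}(3), where the identification $\mfG\Lambda(\mfX_\eta)\cong\Lambda(\mfX_\eta\times_{\T^N}\widehat{\T}^N)$ is quoted from \cite[Proposition~5.4 and Corollary~7.15]{vezz-fw} and used to check that an \emph{already-constructed} transformation is invertible.

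The paper avoids your difficulty by constructing the natural transformation abstractly, before knowing what it does on objects. The auxiliary categories $\sPerfDA^{\eff}_{\widehat I}(\kappa)$ of semi-perfectoid motives are introduced for exactly this purpose: the inclusions $\iota$, $j$ yield adjunctions, and the transformation $\xi\circ\RR j_*\Rightarrow\RR j_*\circ\xi$ drops out formally from the counit $\LL j^*\RR j_*\Rightarrow\id$ (Theorem~\ref{premain}(2)). Only then is invertibility tested on the generators $\Lambda(\mfX_\sigma)$. Your ``Main obstacle'' paragraph correctly names this as the crux and leaves it open; what is missing is the mechanism --- the semi-perfectoid intermediate categories --- that makes the transformation exist in the first place.
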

This tells us in particular that the method based on Scholze's tilting functor
 is just as good as Monsky-Washnitzer's (and Berthelot's) for defining rigid cohomology, up to enlarging the coefficient ring to a perfectoid field $K$ (see Corollary \ref{rigtilt}).

Incidentally, we point out that this second method is directly generalizable to (algebraic, perfectoid or) analytic varieties over $K^{\flat}$ giving rise to a cohomology theory ``\`a la de Rham'' that takes values in $K$-vector spaces (see Section \ref{secdR}). To our knowledge, this is the first time that such a definition appears in the literature, the ring of coefficients  $K$ being ``smaller'' than the one used for rigid cohomology over $K^\flat$ and different from the fields of coefficients considered by Lazda and Pal \cite{lp} (but relations between these approaches are envisaged). This definition can be applied in particular to families $\{\bar{X}_t\}$ of varieties over a field of characteristic $p$ which are generically smooth, and coincides with rigid cohomology of the special fiber $\bar{X}_0$ in case the family is smooth everywhere.  We therefore obtain a generalization of the rigid cohomology of a log-scheme (in the case of  semi-stable reduction) without the need of introducing a log-structure.

The technical difficulty in the proof is the construction of a natural transformation between the two functors. Due to the intricate definition of $\mfG$ the way we obtain it is indirect, via the introduction of some auxiliary categories (of semi-perfectoid spaces over $\mcO_K$ and over $k$) and the generalization of some results in \cite{vezz-fw}. Once the transformation is defined, one can deduce it is invertible from some explicit computations on very special motives generating $\DM(k)$ (see Theorem \ref{premain}).

Our main theorem has further, non-obvious consequences that are not necessarily correlated with rigid cohomology. We briefly describe some of them.
\begin{enumerate}
\item In general, there is not an explicit, geometric formula giving the image of the motive of a variety $X$ by the motivic tilting equivalence $\mfG_{K,K^\flat}\colon \RigDM(K)\cong\RigDM(K^\flat)$. From the theorem above, we find such a description for varieties $X$ with good reduction, i.e. having a smooth model $\mfX$ over $\mcO_K$. Their tilt is simply the motive of (the base change of the analytification of) their reduction $\bar{X}$ i.e. the special fiber $\mfX_\sigma$ (see Proposition \ref{tiltgr}).
\item Ayoub \cite{ayoub-rig} has proved that in the equal-characteristic zero case, the category $\RigDM(K)$ has a convenient description as a full, triangulated subcategory of motives above $\G_{m,k}$. The same is true for the equal-characteristic $p$ case, at the cost of considering only motives with good reduction, and introducing some technical hypotheses on the field. The mixed-characteristic counterpart was conjectured in \cite{ayoub-rig} and we are able to prove it here (see Theorem \ref{ayoubgr+}).
\item The weak tannakian formalism  \cite{ayoub-h1}, \cite{ayoub-h2} allows to use the Betti realization functor $\DM(K)\ra\catD(\Q)$ to define a motivic Galois group for subfields $K$ of $\C$. Moreover, the equivalence of the previous point  also gives rise to a Galois group  for a non-archimedean field of equi-characteristic zero. This induces relations between the absolute motivic Galois group of a field,  the motivic Galois group of its completion over a non-archimedean place of equal characteristic $0$ and the motivic Galois group of the residue field, mimicking  the relations between the absolute Galois groups and the decomposition group associated to a non-archimedean place. We can use the de Rham realization to obtain similar statements in any characteristic. We plan to study this in detail in a later work.
\end{enumerate} 
 
 We warn the reader that we mostly use here the variant of motives \emph{without transfers}, denoted typically with $\DA$ rather than $\DM$. These two approaches are canonically equivalent whenever the base field is perfect and the ring of coefficients contains $\Q$ (see \cite{vezz-DADM}): the one without transfers has the quality of being more direct for definitions.
 
  This article is organized as follows. In Section \ref{secsp} we introduce some accessory geometric categories over a valued field $K$, its ring of valuation $\mcO_K$ as well as its residue field $k$ and we define the associated categories of motives. In Section \ref{secmain} we prove our main theorem. In Section \ref{secdR} we describe how to define a de Rham cohomology over a field of equi-characteristic $p$ while in Section \ref{secconj} we solve Ayoub's conjecture on rigid analytic motives of good reduction in mixed characteristic.

\section{Motives of semi-perfectoid spaces over $K^\circ$ and $k$}
\label{secsp}

We briefly recall (see \cite{scholze}) that a \emph{perfectoid field} is a complete field with respect to a non-discrete valuation of rank $1$, with  residue characteristic equal to $p>0$ and such that the Frobenius map $\Frob$ is surjective over $\mcO_K/p$. For example,  the completion of $\Q_p(\mu_{p^\infty})$ and  $\C_p$ are perfectoid. Such an object can be associated canonically to a perfectoid field of characteristic $p$, which is called its \emph{tilt}, obtained as the fraction field of $\varprojlim_{\Frob}\mcO_K/p$. For example, the tilt of the completion of $\Q_p(\mu_{p^\infty})$ is the completion of $\F_p(\!(t^{1/p^\infty})\!)$ and the tilt of $\C_p$ is the completion of an algebraic closure of $\F_p(\!(t)\!)$.

In this section we make the following hypotheses. 
\begin{assu}\label{assKisperf}
We let $K$ be a perfectoid field,  we denote its tilt by $K^\flat$  and we fix an invertible, topologically nilpotent element $\pi\in K$ (a so-called \emph{pseudo-uniformizer} of $K$).  The valuation ring $\mcO_K$ will be  denoted by $K^\circ$ and the residue field by $k$.  
\end{assu}

For the notions of perfectoid spaces over perfectoid fields and their properties we refer to \cite{scholze}. We will make constant use of Huber's notations \cite{huber} for rigid analytic spaces and their generalizations. In particular, we will use the notation $\Spa(R,R^\circ)$ (which we will typically abbreviate by $\Spa R$) to indicate the analytic spectrum associated to a (well-behaved) topological ring $R$ and its maximal integral structure, given by the subring of power-bounded elements $R^\circ$. Also, we will use the symbol $K\langle\nu\rangle$ (and similar variants) to denote the  $\pi$-adic completion of the ring of polynomials $K[\nu]$.
 
 We fix here some notations and examples. 

\begin{exm}
\begin{enumerate}
\item The space $\widehat{\B}^1\colonequals\Spa(K\langle \upsilon^{1/p^\infty}\rangle)$ is perfectoid. The global function $\upsilon$ induces a map $\widehat{\B}^1\ra\B^1$ where $\B^1 =\Spa K\langle \upsilon\rangle$ is the usual closed rigid disc. This map also induces a map of formal schemes $\widehat{\mfB}^1\colonequals\Spf K^\circ\langle \upsilon^{1/p^\infty}\rangle\ra\mfB^1\colonequals \Spf K^\circ\langle \upsilon\rangle$ with special fiber $\widehat{\A}^1\colonequals\Spec k[ \upsilon^{1/p^\infty}]\ra\A^1= \Spec k[\upsilon]$.
\item The maps above restrict to the rational subspaces defined by the equation $|\upsilon|=1$ that is, we have maps $\widehat{\T}^1\colonequals\Spa K\langle \upsilon^{\pm1/p^\infty}\rangle\ra\T^1\colonequals\Spa K\langle \upsilon^{\pm1}\rangle$ and $\mfT^1\colonequals\Spf K^\circ\langle \upsilon^{\pm1/p^\infty}\rangle\ra\mfT^1\colonequals\Spf K^\circ\langle \upsilon^{\pm1}\rangle$ with special fiber $\widehat{\G}_m^1\colonequals\Spec k[ \upsilon^{\pm1/p^\infty}]\ra\G_m= \Spec k[\upsilon^{\pm1}]$.
\item For any positive integer $N$ we denote by $X^N$ the $N$-fold product of $X$ therefore defining the spaces $\T^N$, $\widehat{\T}^N$, $\mfT^N$ etc.
\end{enumerate}
\end{exm}

In \cite{vezz-fw} we defined the category of semi-perfectoid spaces over $K$. These objects form a convenient category of  adic spaces containing both smooth rigid analytic spaces and  smooth perfectoid spaces. A crucial property they have is that they are \emph{sheafy} (see \cite{bv}) which means that the structural presheaf $\mcO_X$ is actually a sheaf of complete topological rings for the analytic topology. We recall briefly their definition.

\begin{dfn}\label{defsemip}
The category of \emph{smooth semi-perfectoid spaces over $K$} is the full subcategory of adic spaces formed by spaces which are  locally isomorphic to $X_0\times_{\T^N}\widehat{\T}^N$ for some  \'etale morphism of rigid analytic varieties $X_0\ra\T^N\times\T^M$. We remark that $X_0\times_{\T^N}\widehat{\T}^N\sim \varprojlim X_h$ where $X_h\colonequals X_0\times_{\T^N,\varphi_h}{\T}^N$ and the map $\varphi_h$ is induced by raising the coordinates $\upsilon$ to the $p^h$-th power (for the precise meaning of $\sim$ we refer to \cite[Section 2.4]{huber}).

If one imposes $N=0$ in the definition above, one recovers the usual full subcategory category $\RigSm/K$ of smooth rigid analytic varieties over $K$ while if one imposes $M=0$, one defines the full subcategory $\PerfSm/K$ of \emph{smooth perfectoid spaces }over $K$.
\end{dfn}

As remarked in \cite{vezz-fw}, for all the categories above we can define the \'etale topology by considering Scholze's definition  (see \cite[Definition 7.1]{scholze}).

We now make the analogous definition in the case of smooth formal schemes over $K^\circ$ and of smooth algebraic varieties over $k$. Such geometrical categories will only play an accessory role in our main theorem. On the other hand, we remark that the category $\PerfSm/K^\circ$ is at the core of the constructions made in \cite{bms}.

\begin{dfn}\label{defsemip2}
A \emph{smooth semi-perfectoid space over $k$} [resp. $K^\circ$] is a [formal] scheme which is Zariski locally isomorphic to $X_0\times_{\G_m^N}\widehat{\G}_m^N$ [resp. $\mfX_0\times_{\mfT^N}\widehat{\mfT}^N$] for some [formally] \'etale morphism $X_0\ra\G_m^N\times\G_m^M$ [resp. $\mfX_0\ra\mfT^N\times\mfT^M$]. The full subcategory of schemes they form will be denoted by $\sPerfSm/k$ [resp. $\sPerfSm/K^\circ$]. Locally, a smooth semi-perfectoid space is the inverse limit of the system $X_h\colonequals X_0\times_{\G_m^N,\varphi_h}\G_m^N $ [resp. $\mfX_h\colonequals \mfX_0\times_{\mfT_m^N,\varphi_h}\mfT_m^N$] where $\varphi_h$ is induced by raising the coordinates $\upsilon$  to the $p^h$-th power.

If one imposes $N=0$ in the definitions above, one recovers the usual full subcategory category $\Sm/k$ of smooth schemes over $k$ [resp. the category $\FormSm/K^\circ$  of smooth formal schemes topologically of finite type over $K^\circ$]. If one imposes $M=0$ in the definitions above, one defines the full subcategory $\PerfSm/k$ of \emph{smooth perfectoid spaces over $k$ }[resp. $\PerfSm/K^\circ$ of \emph{smooth formal perfectoid spaces over $K^\circ$}].
\end{dfn}

\begin{rmk}
	The terminology ``smooth [semi-]perfectoid space'' is not standard, and here means ``locally \'etale over the perfectoid poly-disc'', matching the notion of smoothness for rigid analytic varieties (see for example \cite[Corollary 1.1.51]{ayoub-rig}) . On the other hand,  smooth semi-perfectoid spaces over  $K^\circ$ are sometimes called in the literature ``locally small'' (see for example \cite[Remark 1.15]{bms}) following Faltings's terminology \cite{faltings-pS}.
\end{rmk}

For all the categories above, we can define the \'etale topology by specializing the definition of \'etale maps and coverings for schemes and formal schemes. %

\begin{prop}\label{spgenfib}
	Let $\Spf A$ be \'etale over $\widehat{\mfT}^N\times\mfT^M$. The generic fiber functor $(-)_\eta$ induced by $\Spf A\mapsto\Spa(A[1/\pi],A)$ and the special fiber functor $(-)_\sigma$ induced by $\Spf A\mapsto\Spec(A\otimes k)$ define well-defined functors  to the \'etale  topos of $\sPerfSm/K^\circ$ from the one of $\sPerfSm/K$ and of $\sPerfSm/k$ respectively. These functors restrict to functors  to the topos of  formal schemes of tft from the one of smooth rigid varieties over $K$ and of smooth schemes over $k$ respectively. They also restrict to  maps on the associated \'etale topoi of perfectoid spaces.
\end{prop}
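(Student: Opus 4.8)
The plan is to reduce the statement to the assertion that the geometric generic-fibre and special-fibre functors
\[
(-)_\eta\colon\sPerfSm/K^\circ\lra\sPerfSm/K,\qquad (-)_\sigma\colon\sPerfSm/K^\circ\lra\sPerfSm/k
\]
are continuous for the \'etale topologies. Once this is known, restriction of presheaves along either functor sends \'etale sheaves to \'etale sheaves, and the resulting functors $\mathcal F\mapsto\bigl(\mfX\mapsto\mathcal F(\mfX_\eta)\bigr)$ and $\mathcal F\mapsto\bigl(\mfX\mapsto\mathcal F(\mfX_\sigma)\bigr)$ are exactly the claimed functors of \'etale topoi. By the standard criterion for continuity of a functor of sites, it then suffices to check for each of the two functors that: (i) it is well defined on objects and morphisms; (ii) it commutes with base change along \'etale morphisms, under which the categories in play are stable; (iii) it carries \'etale coverings to \'etale coverings.

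First I would settle (i). One computes the fibres of the standard local models, namely $(\widehat{\mfT}^N\times\mfT^M)_\eta=\widehat{\T}^N\times\T^M$ and $(\widehat{\mfT}^N\times\mfT^M)_\sigma=\widehat{\G}_m^N\times\G_m^M$, directly from the definitions recalled above. Then, for $\Spf A$ formally \'etale over $\widehat{\mfT}^N\times\mfT^M$, base change along $K^\circ\to k$ shows that $A\otimes k$ is \'etale over $\widehat{\G}_m^N\times\G_m^M$, so $(\Spf A)_\sigma\in\sPerfSm/k$; and $\Spa(A[1/\pi],A)$ is \'etale over $\widehat{\T}^N\times\T^M$ because the generic-fibre functor takes (formally) \'etale morphisms of $\pi$-adic formal schemes to \'etale morphisms of adic spaces (see \cite{huber}), so $(\Spf A)_\eta\in\sPerfSm/K$. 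Functoriality is clear and both functors extend to all of $\sPerfSm/K^\circ$ by gluing along Zariski covers. The case $N=0$ yields the restrictions $\FormSm/K^\circ\to\RigSm/K$ and $\FormSm/K^\circ\to\Sm/k$, and the case $M=0$ the restrictions $\PerfSm/K^\circ\to\PerfSm/K$ and $\PerfSm/K^\circ\to\PerfSm/k$, hence the last two assertions of the statement. For (ii) the essential identities are $(A\hat\otimes_C B)\otimes k=(A\otimes k)\otimes_{C\otimes k}(B\otimes k)$ and $(A\hat\otimes_C B)[1/\pi]=A[1/\pi]\hat\otimes_{C[1/\pi]}B[1/\pi]$, together with the observation that a formal scheme \'etale over an object of $\sPerfSm/K^\circ$ is again such an object (and similarly over $K$ and over $k$), so the fibre products occurring in the \'etale topology remain inside the categories and are preserved. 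For (iii), an \'etale cover $\{\Spf A_i\to\Spf A\}$ is a jointly surjective family of (formally) \'etale maps; its special fibre $\{(\Spf A_i)_\sigma\to(\Spf A)_\sigma\}$ is again a jointly surjective family of \'etale maps, since $\Spf A$ and its special fibre share the same underlying space; its generic fibre $\{(\Spf A_i)_\eta\to(\Spf A)_\eta\}$ consists of \'etale maps by (i) and is jointly surjective because every point of $(\Spf A)_\eta$ reduces, through the specialization map $\mathrm{sp}\colon(\Spf A)_\eta\to\Spf A$, to a point of $\Spf A$ lying in the image of some $\Spf A_i$, along which $(-)_\eta$ is compatible with \'etale base change. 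In the perfectoid directions one passes to the inverse limit over the Frobenius-twist towers of Definitions \ref{defsemip}--\ref{defsemip2}, which is harmless on underlying topological spaces since there the transition maps are universal homeomorphisms.

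I expect the main obstacle to be the generic-fibre half of (iii): controlling the image of an \'etale cover of $\pi$-adic formal schemes under $(-)_\eta$ relies on the interplay between a formal scheme and its adic generic fibre through the specialization map, together with the openness of \'etale morphisms of adic spaces. The special-fibre functor is by comparison purely formal, as are the bookkeeping of the perfectoid directions and the final application of the continuity criterion.
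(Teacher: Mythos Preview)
Your argument is correct, but it differs markedly in style from the paper's. The paper's proof is a single sentence: it observes that all the adic spaces involved are sheafy (this having been established earlier via \cite{bv}) and then defers the entire claim to \cite[Lemma~3.5.1(1)]{huber}, which already packages the assertion that the generic-fibre functor from $\pi$-adic formal schemes to adic spaces induces a morphism of \'etale sites. The special-fibre half and the restrictions to the perfectoid and tft subcategories are treated as evident. By contrast, you unpack the verification of continuity by hand: well-definedness on local models, compatibility with \'etale base change, preservation of surjectivity of covers via the specialization map. What your approach buys is self-containment and transparency about where each ingredient enters (in particular, your remark that the generic-fibre surjectivity check goes through $\mathrm{sp}$ is exactly the content hidden inside Huber's lemma); what the paper's approach buys is brevity and a clean pointer to the literature. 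One minor point: your final sentence about passing to inverse limits over Frobenius towers is unnecessary, since the restrictions to $\PerfSm$ are simply the $M=0$ case of what you have already shown and require no separate limit argument.
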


\begin{proof}
All the adic spaces involved in the statement are sheafy, so the claim follows from \cite[Lemma 3.5.1(1)]{huber}.
\end{proof}

Out of these geometric categories, we can immediately define the associated \emph{motives}. These (infinity-)categories are the natural environment where to define and study homotopies in an algebraic geometrical setting, and they satisfy interesting universal properties related to Weil cohomology theories, given by their construction.

 We use here a triangulated, more down-to-earth definition rather the model-categorical one we used in \cite{vezz-fw} (recalled in Remark \ref{modcatmot} and that we will freely use in proofs). Motives will be here defined as quotients of triangulated categories, namely the derived categories of $\Lambda$-sheaves over a site. The quotient is taken in order to impose homotopy-invariance on the (co-)homological functors, i.e. invariance with respect to the projections  $X\times I\ra X$ induced by a chosen ``interval object" $I$ in the geometric category. %

\begin{dfn}
	Fix a commutative ring $\Lambda$.  	Let  $\kappa$ be in $\{k,K^\circ,K\}$ and fix an object $I$ in $\sPerfSm/\kappa$. 
	\begin{itemize}
	\item 	We denote by $\Lambda(X)$ the presheaf with values in $\Lambda$-modules represented by $X$ i.e. the presheaf associating $Y$ to the free $\Lambda$-module associated to the set $\Hom(Y,X)$. 
	\item 	We denote by  $\Frobet$  the topology on $\sPerfSm/\kappa$ generated by \'etale covers and relative Frobenius morphisms (in case the base ring has positive characteristic) and let $a_{\Frobet}$ be  the associated sheafification functor. 
	\item The category of\emph{ effective semi-perfectoid motives} $\sPerfDA^{\eff}_{\Frobet,I}(\kappa,\Lambda)$ is the Verdier quotient of the derived category  of $\Frobet$-sheaves on smooth semi-perfectoid spaces with values in $\Lambda$-modules $\catD(\Sh_{\Frobet}(\sPerfSm/\kappa,\Lambda))$ over its sub-triangulated category with small sums generated by the cones of the projection maps $a_{\Frobet}\Lambda(X\times I)\ra a_{\Frobet}\Lambda(X)$ by letting $X$ vary in $\sPerfSm/\kappa$.%
	\item We  make an analogous definition for smooth perfectoid spaces over $\kappa$ in which case the categories of motives will be denoted by $\PerfDA^{\eff}_{\Frobet,I}(\kappa,\Lambda)$. 
	\item We can also make the analogous definitions for smooth schemes over $k$, smooth formal schemes of tft over $K^\circ$ and smooth rigid varieties over $K$ and the associated motives are denoted by $\DA^{\eff}_{\Frobet,I}(k,\Lambda)$, $\FormDA^{\eff}_{\Frobet,I}(K^\circ,\Lambda)$ and $\RigDA^{\eff}_{\Frobet,I}(K,\Lambda)$ respectively.
	\end{itemize}
\end{dfn}

\begin{rmk}\label{modcatmot}
Fix a site $(\cat,\tau)$ and an object $I\in \cat$ (for example, we can take the $\Frob$-\'etale site on smooth rigid analytic varieties over $K$ and $I$ to be the ball $\B^1$). The category of the associated motives (in the example, the category $\RigDA^{\eff}(K,\Lambda)$) admits the following presentation as a homotopy category of a model category structure (see \cite[Chapter 4]{ayoub-th2}). 

One can first consider the projective model structure on (unbounded) complexes of presheaves on $\cat$ with coefficients in $\Lambda$. It contains representable presheaves $\Lambda(X)$ as well as the complexes $\Lambda(\mcU)$  associated to any simplicial object $\mcU$ of $\cat$. Its homotopy category is the (unbounded) derived category $\catD(\Psh(\cat,\Lambda))$.   Let $S$ be the class of maps containing all shifts of the the canonical arrows $\Lambda(X\times I)\ra\Lambda(X)$, $\Lambda(\mcU)\ra\Lambda(X)$ for any object $X$ and any $\tau$-hypercover $\mcU$ of it. The left Bousfield localisation of the projective model structure over $S$ is well defined, and its homotopy category coincides with the triangulated category of effective motives over the datum $(\cat,\tau,I)$. One can alternatively consider the category of (unbounded) complexes of \emph{sheaves} on $\cat$ and localize it with respect to the class of maps  $\Lambda(X\times I)\ra\Lambda(X)$ and their shifts. The interested reader can find all the details in \cite{ayoub-th2} (briefly resumed in \cite{vezz-fw}).

This presentation of motives as homotopy categories of a Quillen model structure equip them with a natural structure of a tensor DG-category, not merely of a a triangulated category. We remark that the tensor product $\Lambda(X)\otimes\Lambda(Y)$ is isomorphic to $\Lambda
(X\times Y)$. The language of model categories is also very convenient to define natural adjoint functors of motivic categories out of functors at the level of the underlying sites $(\cat,\tau)$.
\end{rmk}

\begin{exm}
We now list the objects $I$ that we will consider. 
\begin{enumerate}
\item The disc of radius one $\B^1=\Spa K\langle\upsilon\rangle$ is an object of $\RigSm/K$. It has a smooth formal model $\mfB^1=\Spf K^\circ\langle \upsilon\rangle$ which is an object of $\FormSm/K^\circ$. Its special fiber is the affine line $\A^1_k$ in $\Sm/k$.
\item The perfectoid disc of radius one $\widehat{\B}^1=\Spa K\langle\upsilon^{1/p^\infty}\rangle$ is an object of $\PerfSm/K$. It has a smooth formal model $\widehat{\mfB}^1=\Spf K^\circ\langle \upsilon^{1/p^\infty}\rangle$ which is an object of $\PerfSm/K^\circ$ since it is covered by the two subspaces $\Spf K^\circ\langle \upsilon^{\pm1/p^\infty}\rangle$ and $\Spf K^\circ\langle (\upsilon+1)^{\pm1/p^\infty}\rangle$ both isomorphic to $\widehat{\mfT}^1$. Its special fiber is $\widehat{\A}^1_k\colonequals \Spec k[\upsilon^{1/p^\infty}]$ which lies in $\PerfSm/k$.
\end{enumerate} 
\end{exm}

In order to have simpler notations, from now on we will adopt some alternative conventions for the   categories of motives that we will use the most.
\begin{dfn}We make the following abbreviations.
	\begin{itemize}
\item Whenever the interval object is the ``natural one'' we will drop it from the notation, as follows:\\	$\DA^{\eff}_{\Frobet}(k,\Lambda)\colonequals\DA^{\eff}_{\Frobet,\A^1}(k,\Lambda)$,\\	$\FormDA^{\eff}_{\Frobet}(K^\circ,\Lambda)\colonequals\FormDA^{\eff}_{\Frobet,\mfB^1}(K^\circ,\Lambda)$, \\ $\RigDA^{\eff}_{\Frobet}(K,\Lambda)\colonequals\RigDA^{\eff}_{\Frobet,\B^1}(K,\Lambda)$.
\item Moreover, whenever the \'etale topology is considered, we sometimes omit it from the notation and we then put:\\
$\DA^{\eff}(k,\Lambda)\colonequals\DA^{\eff}_{\et,\A^1}(k,\Lambda)$,\\
$\FormDA^{\eff}(K^\circ,\Lambda)\colonequals\FormDA^{\eff}_{\et,\mfB^1}(K^\circ,\Lambda)$, \\
 $\RigDA^{\eff}(K,\Lambda)\colonequals\RigDA^{\eff}_{\et,\B^1}(K,\Lambda)$,\\
  $\sPerfDA^{\eff}_{I}(\kappa,\Lambda)\colonequals\sPerfDA^{\eff}_{\et,I}(\kappa,\Lambda)$,\\
  $\sPerfDA^{\eff}_{\widehat{I}}(\kappa,\Lambda)\colonequals\sPerfDA^{\eff}_{\et,\widehat{I}}(\kappa,\Lambda)$, \\
  $\PerfDA^{\eff}(\kappa,\Lambda)\colonequals\PerfDA^{\eff}_{\et,\widehat{I}}(\kappa,\Lambda)$
  \\ where we let  $\kappa$ be in $\{k,K^\circ,K\}$ and $I$ [resp. $\widehat{I}$] be ${\A}^1_k$, ${\mfB}^1$ or ${\B}^1$ [resp. $\widehat{\A}^1_k$, $\widehat{\mfB}^1$ or $\widehat{\B}^1$] accordingly.  
\item We will assume $\Q\subset\Lambda$ and we drop $\Lambda$ from the notation, if the context allows it.
	\end{itemize}
\end{dfn}

\begin{rmk}
Suppose that $\car K=p$. If one considers the \'etale topology instead of the finer $\Frobet$ topology, one gets the category of \'etale motives $\RigDA^{\eff}_{\et}(K,\Lambda)$ and a natural functor $\RigDA^{\eff}_{\et}(K,\Lambda)\ra\RigDA^{\eff}_{\Frobet}(K,\Lambda)$. This functor coincides with the Verdier quotient over the subcategory generated by cones of the relative Frobenius maps $X\ra X\times_{\Frob}K$ (see \cite{vezz-DADM}).
\end{rmk}
\begin{rmk}
	For semi-perfectoid motives, we remark that the localizations over the perfectoid disc (or the perfect affine line) factor over the localizations with respect to the disc (or the affine line). That is: homotopy equivalences with respect to the disc are automatically homotopy equivalences with respect to the {perfectoid} disc (see for example  \cite[Section 3]{vezz-fw}). In particular, there is  an adjunction (more precisely, a Bousfield localisation) $$\sPerfDA^{\eff}_{I}(\kappa,\Lambda)\rightleftarrows\sPerfDA^{\eff}_{\widehat{I}}(\kappa,\Lambda)$$
	where  we let  $\kappa$ be in $\{k,K^\circ,K\}$ and $I$ [resp. $\widehat{I}$] be ${\A}^1_k$, ${\mfB}^1$ or ${\B}^1$ [resp. $\widehat{\A}^1_k$, $\widehat{\mfB}^1$ or $\widehat{\B}^1$] accordingly.
\end{rmk}

The knowledgeable reader will notice that we use here the version of (effective, derived) motives \emph{without transfers}  rather than the version  \emph{with transfers}, typically denoted with $\DM^{\eff}$ and mentioned in the introduction. We recall here briefly their definition, which makes use of correspondences. For more properties of such categories, we refer to \cite[Chapter 2]{ayoub-rig}.

\begin{dfn} Ler $K'$ be any complete valued field (possibly trivially valued).
	\begin{itemize}
		\item We define the category $\RigCor/K'$ as the category whose objects are those of $\RigSm/K'$ and whose morphisms $\Hom(X,Y)$ are the free $\Lambda$ modules generated by the set of integral, closed subvarieties $Z$ of $X\times Y$ which are finite and surjective over a connected component of $X$ and where composition is induced by the intersection formula (see \cite[Remark 2.2.21]{ayoub-rig}).
\item 		The category $\Psh(\RigCor/K')$ will be denoted by $\PST(\RigSm/K')$. Its full subcategory of those presheaves $\mcF$ that are sheaves when restricted to $\RigSm/K'$ will be denoted by $\Sh_{\tr}(\RigSm/K')$. The sheaf represented by a smooth variety $X$ is denoted by $\Lambda_{\tr}(X)$.
	\item The category of effective rigid analytic motives with transfers $\RigDM^{\eff}(K',\Lambda)$ is the Verdier quotient of the derived category of $\Sh_{\tr}(\RigSm/K')$ over its sub-triangulated category with small sums generated by the cones of the projection maps $\Lambda_{\tr}(X\times\B^1)\ra\Lambda_{\tr}(X)$ by letting $X$ vary in $\RigSm/K'$. In case $K'$ is trivially valued, it will simply be denoted with  $\DM^{\eff}(K',\Lambda)$
	\end{itemize}
\end{dfn}

\begin{rmk}\label{modcatmotDM}
Also the category  $\RigDM^{\eff}(K',\Lambda)$ admits a presentation as a homotopy category of a model category structure, analogous to the case of motives without transfers (Remark \ref{modcatmot}).
	
	One can first consider the projective model structure on  complexes $\Ch\PST(K',\Lambda)$.  Its homotopy category is the (unbounded) derived category $\catD(\Psh(\cat,\Lambda)$.   Let $S$ be the class of maps containing all shifts of the the canonical arrows $\Lambda_{\tr}(X\times I)\ra\Lambda_{\tr}(X)$, $\Lambda_{\tr}(\mcU)\ra\Lambda_{\tr}(X)$ for any smooth variety $X$ and any $\et$-hypercover $\mcU$ of it. The left Bousfield localisation of the projective model structure over $S$ is well defined, and its homotopy category coincides with $\RigDM^{\eff}(K',\Lambda)$. The interested reader can find all the details in \cite{ayoub-rig}.%
	
	This presentation of motives as homotopy categories of a Quillen model structure equip them with a natural structure of a tensor DG-category, not merely of a a triangulated category. We remark that the tensor product $\Lambda_{\tr}(X)\otimes\Lambda_{\tr}(Y)$ is isomorphic to $\Lambda_{\tr}(X\times Y)$.
\end{rmk}

Adding correspondences or not makes no difference in our context as soon as $\Q\subset\Lambda$ thanks to the following  fact (which is classical in a vast list of situations, see \cite[Appendix B]{ayoub-h1} and the other references of \cite{vezz-DADM}).

\begin{thm}[{\cite{vezz-DADM}}]\label{DADM}
	Under Assumption \ref{assKisperf} and the hypothesis that  $\Lambda$ is a $\Q$-algebra, there are canonical triangulated, monoidal (Quillen) equivalences
		$$
		\DA^{\eff}_{\Frobet}(k,\Lambda)\cong\DM_{\et}^{\eff}(k,\Lambda)\qquad
		\RigDA^{\eff}_{\Frobet}(K,\Lambda)\cong\RigDM_{\et}^{\eff}(K,\Lambda)
		$$
\end{thm}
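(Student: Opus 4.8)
The equivalences are the main theorem of \cite{vezz-DADM}; I only sketch the strategy, which adapts the classical comparison of motives with and without transfers (Morel, Ayoub, Cisinski--Déglise) to the positive-characteristic and analytic settings. The starting point is a Quillen adjunction between the model categories presenting the two sides: the left adjoint $a_{\tr}$ freely adjoins transfers (it is the colimit-preserving extension of $X\mapsto\Lambda_{\tr}(X)$, followed by $\et$-sheafification with transfers), while the right adjoint $o_{\tr}$ forgets the transfer structure, sending a complex of $\et$-sheaves with transfers to its underlying complex of $\et$-sheaves. Both functors are exact, and one checks that $o_{\tr}$ carries $\B^1$-local objects to $\B^1$-local objects, so that the two motivic localizations match up. On the with-transfers side the relative Frobenius $F\colon X\to X\times_{\Frob}k$ of a smooth $d$-dimensional $X$ is already invertible: it is finite locally free of degree $p^{d}$, and since $F$ is radicial the composite of its graph with the transposed finite correspondence is multiplication by $p^{d}$ in $\Cor$, hence invertible because $\Q\subset\Lambda$. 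Therefore $a_{\tr}$ sends the cones of the relative Frobenii to zero and descends to an adjunction
\[
a_{\tr}\colon \DA^{\eff}_{\Frobet}(k,\Lambda)\rightleftarrows\DM^{\eff}_{\et}(k,\Lambda)\colon o_{\tr},
\]
and likewise in the rigid analytic case with $\RigCor/K$ in place of finite correspondences of schemes.

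It then suffices to prove two statements: that $o_{\tr}$ is conservative, and that the unit $\Lambda(X)\to o_{\tr}a_{\tr}\Lambda(X)$ is invertible in $\DA^{\eff}_{\Frobet}(k,\Lambda)$ (resp.\ $\RigDA^{\eff}_{\Frobet}(K,\Lambda)$) for every smooth $X$. Indeed the second statement makes $a_{\tr}$ fully faithful on a generating family, hence fully faithful, and a fully faithful left adjoint with conservative right adjoint is an equivalence (the triangle identities then force the counit to be invertible). Conservativity is the soft part: $o_{\tr}$ is the restriction of the conservative exact forgetful functor $\catD(\Sh_{\tr,\et})\to\catD(\Sh_{\et})$, and the compatibility of the localizations recorded above shows that $o_{\tr}$ maps local objects to local objects, so it descends to the Verdier quotients without losing conservativity.

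The unit statement is the substance of the proof. Concretely it asks that the $\et$-sheaf underlying $\Lambda_{\tr}(X)$, after $\B^1$-localization and after inverting the relative Frobenii, becomes isomorphic to $\Lambda(X)$; this is checked on the points of the $\et$-topos, i.e.\ after taking stalks at strictly henselian local rings (resp.\ at the valued fields attached to the points of the analytic $\et$-site). Over such a base $\mcO$ a finite correspondence into $X$ is a finite $\mcO$-subscheme of $X_{\mcO}$, which splits as a disjoint union of finite \emph{local} $\mcO$-schemes because $\mcO$ is henselian local; one must match each such piece, modulo $\B^1$-homotopy and modulo relative Frobenius, with a genuine $\mcO$-point of $X$. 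The residually separable part is handled by $\et$-descent: finite étale covers split $\et$-locally, and since $\Q\subset\Lambda$ the cohomology of the finite Galois groups that occur vanishes, so the ``trace'' transfers are already visible after $\et$-sheafification; the residually inseparable part is exactly what disappears upon passing to the $\Frobet$-localization, since a finite morphism of reduced $k$-schemes becomes separable after composition with a power of Frobenius (generic étaleness); and the leftover nilpotent thickenings are swept away by $\B^1$-invariance. This is the step I expect to be the main obstacle; in the rigid analytic case it is more delicate, since the points of the analytic $\et$-topos are non-archimedean valued fields and one must invoke the local structure theory of rigid finite correspondences developed in \cite[Chapter~2]{ayoub-rig} — alternatively the rigid statement can be bootstrapped from the algebraic one along the analytification functor $\Rig^*$ together with a continuity/approximation argument, much as in \cite{vezz-fw}.

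Finally, that the equivalences are triangulated and monoidal requires no extra work: $a_{\tr}$ is an exact left Quillen functor between tensor model structures, it sends $\Lambda(X)\otimes\Lambda(Y)=\Lambda(X\times Y)$ to $\Lambda_{\tr}(X\times Y)=\Lambda_{\tr}(X)\otimes\Lambda_{\tr}(Y)$, and it preserves the unit object; hence the induced equivalence of homotopy categories is monoidal and triangulated.
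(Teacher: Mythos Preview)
The paper does not actually give a proof of this statement: Theorem~\ref{DADM} is simply quoted from \cite{vezz-DADM}, with only a one-line remark afterwards identifying the equivalence functor as the derived ``adding transfers'' functor $\LL a_{\tr}$ induced by the inclusion $\Sm/K\to\RigCor/K$. So there is no in-paper argument to compare your proposal against.

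That said, your sketch is a reasonable outline of the strategy used in \cite{vezz-DADM}: the adjunction $(a_{\tr},o_{\tr})$, the invertibility of relative Frobenius on the $\DM$ side when $\Q\subset\Lambda$, conservativity of $o_{\tr}$, and the reduction to showing that the unit $\Lambda(X)\to o_{\tr}a_{\tr}\Lambda(X)$ is an equivalence. Your description of the hard step (splitting finite correspondences into a separable part handled by \'etale descent with $\Q$-coefficients, an inseparable part killed by the $\Frobet$-localization, and nilpotents absorbed by $\B^1$-invariance) is the right heuristic, though the actual argument in \cite{vezz-DADM} is organized somewhat differently and does not proceed literally by computing stalks at points of the \'etale topos. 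For the purposes of this paper, however, none of these details are reproduced; the result is used as a black box.
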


\begin{rmk}
The canonical functor appearing in the theorem above is the derived functor $\LL a_{\tr}$ induced by the canonical inclusion $\Sm/K\ra\RigCor/K$.
\end{rmk}

We now list the adjoint pairs that link the various categories of motives we introduced above. They are all defined thanks to the \emph{functoriality} of the very definition of motives, in the following sense:  motives are defined as appropriate Verdier quotients of the derived category $\catD(\Psh(\cat,\Lambda))$ of $\Lambda$-presheaves on some category $\cat$. Any functor $F\colon\cat\ra\cat'$ induces a (Quillen) adjoint pair between the derived categories of the associates   $\Lambda$-presheaves 
$$\LL F^*\colon\catD\Psh(\cat,\Lambda)\leftrightarrows\catD\Psh(\cat',\Lambda)\colon\RR F_*.$$
This pair is such that $\LL F^*$ sends a representable presheaf $\Lambda(X)$ to the representable presheaf $\Lambda(F(X))$. If the functor $\LL F^*$ factors over the Verdier quotient, we then obtain a (left adjoint) functor defined on the motivic category.%

\begin{thm}\label{derij}
		\begin{enumerate}
\item		The  inclusions $\iota\colon\Sm/k\ra\sPerfSm/k$ resp. $\iota\colon\FormSm/K^\circ\ra\sPerfSm/K^\circ$ resp. $\iota\colon\RigSm/K\ra\sPerfSm/K$ induce the following (Quillen)  triangulated adjunctions, whose left adjoint is monoidal:
		$$
		\adj{\LL\iota^*}{\DA^{\eff}(k,\Lambda)}{\sPerfDA^{\eff}_{\A^1}(k,\Lambda)}{\RR\iota_*}
		$$
		$$
		\adj{\LL\iota^*}{\FormDA^{\eff}(K^\circ,\Lambda)}{\sPerfDA^{\eff}_{\mfB^1}(K^\circ,\Lambda)}{\RR\iota_*}
		$$
		$$
		\adj{\LL\iota^*}{\RigDA^{\eff}(K,\Lambda)}{\sPerfDA^{\eff}_{\B^1}(K,\Lambda)}{\RR\iota_*}
		$$
		\item Let $\kappa$ be in $\{k,K^\circ,K\}$. We let $\widehat{I}$ be $\widehat{\A}^1_k$, $\widehat{\mfB}^1$ or $\widehat{\B}^1$ accordingly. 
			The canonical inclusion $j\colon\PerfSm/\kappa\ra\sPerfSm/\kappa$ induces the following (Quillen)  triangulated adjunction, whose left adjoint is monoidal:
			$$
			\adj{\LL j^*}{\PerfDA^{\eff}(\kappa,\Lambda)}{\sPerfDA^{\eff}_{\widehat{I}}(\kappa,\Lambda)}{\RR j_*}
			$$
			\end{enumerate}
\end{thm}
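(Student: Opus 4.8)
The plan is to construct the adjunctions directly from the functoriality of motives explained just before the statement: for any functor $F\colon\cat\ra\cat'$ of underlying sites one gets a Quillen pair $\LL F^*\colon\catD\Psh(\cat,\Lambda)\leftrightarrows\catD\Psh(\cat',\Lambda)\colon\RR F_*$ with $\LL F^*\Lambda(X)=\Lambda(F(X))$, and the only thing to verify is that $\LL F^*$ descends to the relevant Verdier quotients, i.e.\ that it sends the localizing subcategory of the source to that of the target. So the first step, for each of the three inclusions $\iota$ in (1) and the inclusion $j$ in (2), is to identify these subcategories: they are generated by cones of $\Frobet$- (resp.\ $\et$-) hypercovers and by cones of the interval projections $\Lambda(X\times I)\ra\Lambda(X)$, together with the relative Frobenius cones when $\car k=p$.

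Second, I would check the topology and interval generators are preserved. Since $\iota$ and $j$ are inclusions of full subcategories and the \'etale (resp.\ $\Frobet$) topology on $\sPerfSm$ was defined precisely by specializing/restricting the one on the small category (see the discussion after Definition~\ref{defsemip2} and the definition of $\Frobet$), $\iota$ and $j$ are continuous and cocontinuous, so $\LL\iota^*$ and $\LL j^*$ send $\tau$-hypercover cones to $\tau$-hypercover cones. For the interval: in case (1) the chosen interval in the source ($\A^1_k$, $\mfB^1$, $\B^1$) is literally the chosen interval in the target $\sPerfDA^{\eff}_{\A^1}$, $\sPerfDA^{\eff}_{\mfB^1}$, $\sPerfDA^{\eff}_{\B^1}$, so $\Lambda(X\times I)\ra\Lambda(X)$ maps to a generator verbatim; in case (2) the source interval is the perfectoid one $\widehat{I}$, which is again the chosen interval of $\sPerfDA^{\eff}_{\widehat{I}}$, so the same holds. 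The Frobenius cones (when present) are likewise preserved because $\iota,j$ commute with relative Frobenius. Hence in every case $\LL F^*$ kills the source localizing subcategory, so it factors through the quotient to give a triangulated left adjoint $\LL\iota^*$ (resp.\ $\LL j^*$) on motives, and the right adjoint $\RR\iota_*$ (resp.\ $\RR j_*$) is obtained by the universal property of Verdier localization applied to the composite $\catD(\Sh_\tau(\sPerfSm/\kappa,\Lambda))\xrightarrow{\RR F_*}\catD(\Sh_\tau(\cat,\Lambda))\ra (\text{motives})$, which sends the localizing generators of the target to zero because $\RR F_*$ of a local equivalence is a local equivalence (adjoint of the previous statement).

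Third, monoidality of the left adjoint is immediate: as recalled in Remark~\ref{modcatmot}, the tensor product on each motivic category is characterized by $\Lambda(X)\otimes\Lambda(Y)\cong\Lambda(X\times Y)$, and $\iota$, $j$ preserve finite products (being full embeddings closed under the fiber products defining the objects), so $\LL\iota^*$, $\LL j^*$ are symmetric monoidal on generators, hence everywhere by cocontinuity. To phrase all of this cleanly at the level of model categories rather than triangulated categories, I would invoke the left Bousfield localization presentation of Remark~\ref{modcatmot}: $\LL F^*$ on presheaf categories is left Quillen, and it sends the localizing set $S$ of the source into (weak equivalences times) the localizing set of the target, so by the universal property of left Bousfield localization it remains left Quillen between the localized structures; this gives the Quillen adjunction asserted, and the derived functors are the stated triangulated ones.

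The main obstacle, such as it is, is the bookkeeping that the $\Frobet$ (resp.\ $\et$) topology and the interval object on $\sPerfSm/\kappa$ genuinely \emph{restrict} to the ones on $\Sm/k$, $\FormSm/K^\circ$, $\RigSm/K$ and on $\PerfSm/\kappa$ — i.e.\ that a $\tau$-cover in the small category is a $\tau$-cover in the big one and conversely a $\tau$-cover of a small object by small objects is induced from the small topology, so that $\LL F^*$ and $\RR F_*$ both respect local equivalences. Once this compatibility of sites is recorded (it follows from the definitions of the \'etale topology on these categories via Scholze's \'etale maps and from the construction of $\Frobet$), everything else is the formal transfer of a Quillen adjunction through a left Bousfield localization, and there is no geometric content beyond Proposition~\ref{spgenfib}-style sheafiness, which is not even needed here since no generic/special fiber functor intervenes.
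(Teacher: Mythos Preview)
Your proposal is correct and follows essentially the same route as the paper: invoke the model-categorical presentation (Remark~\ref{modcatmot}), observe that the inclusions $\iota,j$ preserve the interval objects ($\iota\A^1\cong\A^1$, $\iota\mfB^1\cong\mfB^1$, $\iota\B^1\cong\B^1$, $j\widehat{I}\cong\widehat{I}$) and finite products, and conclude via the formal functoriality of the left Bousfield localization. The paper compresses all of your bookkeeping about hypercovers and Frobenius cones into the single phrase ``follows formally from the functoriality of the construction of motives,'' but the content is the same.
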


\begin{proof}We use the definition of motives as homotopy categories given in Remark \ref{modcatmot}. The fact that the maps of topoi induce such Quillen adjunctions follows formally from the functoriality of the construction of motives, and the isomorphisms $\iota\A^1\cong\A^1$, $\iota{\mfB}^1\cong{\mfB}^1$, $\iota\mcB^1\cong\B^1$, $j\widehat{I}\cong\widehat{I}$. Monoidality follows from the formulas $\iota(X\times Y)\cong\iota X\times\iota Y$ and $j(X\times Y)\cong j X\times j Y$.%
\end{proof}

\begin{thm}\label{dd}
	\begin{enumerate}
		\item	The maps of topoi $(-)_\sigma$ considered in Proposition \ref{spgenfib} induce the following (Quillen) monoidal, triangulated equivalences:
		$$
		\adj{\LL(\cdot)^*_\sigma}{\sPerfDA^{\eff}_{\hat{\mfB}^1}(K^\circ,\Lambda)}{\sPerfDA^{\eff}_{\hat{\A}^1}(k,\Lambda)}{\RR(\cdot)_{\sigma*}}
		$$
		$$
		\adj{\LL(\cdot)^*_\sigma}{\PerfDA^{\eff}(K^\circ,\Lambda)}{\PerfDA^{\eff}(k,\Lambda)}{\RR(\cdot)_{\sigma*}}
		$$
		$$
		\adj{\LL(\cdot)^*_\sigma}{\FormDA^{\eff}(K^\circ,\Lambda)}{\DA^{\eff}(k,\Lambda)}{\RR(\cdot)_{\sigma*}}
		$$
		\item
			The map of topoi $(-)_\eta$ induce the following (Quillen) triangulated adjunctions, whose left adjoint is monoidal:
			$$
			\adj{\LL(\cdot)^*_\eta}{\sPerfDA^{\eff}_{\mfB^1}(K^\circ,\Lambda)}{\sPerfDA^{\eff}_{\B^1}(K,\Lambda)}{\RR(\cdot)_{\eta*}}
			$$
			$$
			\adj{\LL(\cdot)^*_\eta}{\PerfDA^{\eff}(K^\circ,\Lambda)}{\PerfDA^{\eff}(K,\Lambda)}{\RR(\cdot)_{\eta*}}
			$$
			$$
			\adj{\LL(\cdot)^*_\eta}{\FormDA^{\eff}(K^\circ,\Lambda)}{\RigDA^{\eff}(K,\Lambda)}{\RR(\cdot)_{\eta*}}
			$$
	\end{enumerate}
\end{thm}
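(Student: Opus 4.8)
The statement asserts that the special- and generic-fiber maps of \'etale topoi from Proposition \ref{spgenfib} upgrade to Quillen adjunctions at the level of motives, with the indicated left adjoints monoidal, and that the three ``special fiber'' adjunctions are in fact equivalences. The proof will proceed in two stages: first the purely formal part (that we get Quillen adjunctions whose left adjoints are monoidal), then the geometric part (that the special-fiber functors are equivalences). For the formal part, I would argue exactly as in the proof of Theorem \ref{derij}: the functors $(-)_\sigma$ and $(-)_\eta$ on the underlying \'etale sites are continuous morphisms of sites (they preserve \'etale covers, as recorded in Proposition \ref{spgenfib}), hence by the functoriality of the construction recalled in Remark \ref{modcatmot} they induce Quillen adjunctions $(\LL(\cdot)^*_?, \RR(\cdot)_{?*})$ on the projective (then Bousfield-localized) model structures on complexes of presheaves. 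That these descend to the motivic localizations is exactly the compatibility with the chosen interval objects: one checks $(\widehat{\mfB}^1)_\sigma \cong \widehat{\A}^1_k$, $(\widehat{\mfB}^1)_\eta \cong \widehat{\B}^1$, $(\mfB^1)_\sigma\cong\A^1_k$, $(\mfB^1)_\eta\cong\B^1$, and the perfectoid analogues, so the left adjoint sends the generating contractible objects to generating contractible objects and hence factors through the Verdier quotients. Monoidality of the left adjoints follows from the fact that $(-)_\sigma$ and $(-)_\eta$ commute with fiber products of formal schemes (hence send $X\times Y$ to $X_?\times Y_?$), so $\LL(\cdot)^*_?$ carries $\Lambda(X)\otimes\Lambda(Y)\cong\Lambda(X\times Y)$ to the corresponding tensor product, just as in Theorem \ref{derij}.

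\emph{The equivalence for special fibers.} The interesting content is part (1): that $\LL(\cdot)^*_\sigma$ is an equivalence in each of the three rows. Here I would use a rigidity/reduction argument of the following shape. For a smooth (semi-perfectoid, perfectoid, or tft) formal scheme $\mfX$ over $K^\circ$, the special fiber functor is essentially surjective: every smooth scheme over $k$ lifts, \'etale-locally, to a smooth formal scheme over $K^\circ$, because smoothness is, locally, being \'etale over a (perfect or ordinary) polytorus $\mfT^N\times\mfT^M$, and \'etale morphisms lift uniquely along the nilpotent-by-completion thickening $\mfT^\bullet \to \mfT^\bullet_\sigma$ by the topological invariance of the \'etale site (formal \'etale lifting). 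For full faithfulness one reduces, using that both sides are compactly generated by the representables $\Lambda(\mfX)$ resp. $\Lambda(\mfX_\sigma)$ and that $\LL(\cdot)^*_\sigma$ is monoidal and commutes with sums, to showing that the unit and counit are isomorphisms on generators; equivalently, that for $\mfX, \mfY$ smooth over $K^\circ$ the induced map
\[
\Map_{\FormDA^{\eff}}(\Lambda(\mfX),\Lambda(\mfY)) \longrightarrow \Map_{\DA^{\eff}}(\Lambda(\mfX_\sigma),\Lambda(\mfY_\sigma))
\]
is an isomorphism, with the semi-perfectoid and perfectoid variants being analogous. The key geometric inputs are: (i) \'etale maps into $\mfT^N\times\mfT^M$ are equivalent, via $(-)_\sigma$, to \'etale maps into $\G_m^N\times\G_m^M$ (topological invariance of the small \'etale site of a formal scheme under the thickening $\mfX_\sigma \hookrightarrow \mfX$); (ii) the homotopy relation is preserved and reflected, since $(\mfX\times\mfB^1)_\sigma = \mfX_\sigma\times\A^1_k$ and similarly for the perfectoid interval; (iii) \'etale hyperdescent matches on both sides. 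These combine to show that $(-)_\sigma$ induces an equivalence of $\et$-sites ``up to the homotopy and Frobenius localizations'', and then the equivalence of motivic categories is a formal consequence. In the semi-perfectoid and perfectoid rows one additionally uses that the Frobenius-\'etale topology is respected — but over $K^\circ$ relative Frobenius of a formal scheme reduces to relative Frobenius of the special fiber, so this is automatic.

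\emph{The generic fiber side.} For part (2), essential surjectivity of $\LL(\cdot)^*_\eta$ onto $\RigDA^{\eff}(K)$ (resp. the perfectoid/semi-perfectoid analogues) up to the relevant localization is \emph{not} expected — a smooth rigid variety need not have a smooth formal model — so here we only get an adjunction, not an equivalence, consistent with the statement. Thus for (2) there is nothing beyond the formal part above: continuity of the site morphism $(-)_\eta$ (Proposition \ref{spgenfib}), compatibility with intervals, and commutation with products.

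\textbf{Main obstacle.} The crux is step (i)–(iii) for the special fiber: showing that $(-)_\sigma$ is, after passing to $\et$-sheaves and inverting the interval and Frobenius maps, an \emph{equivalence} of the underlying homotopical data, not merely a morphism. The delicate point is the uniqueness and functoriality of \'etale lifts along the thickening $\mfX_\sigma\hookrightarrow\mfX$ in the semi-perfectoid and perfectoid settings, where $\mfX$ is a non-noetherian formal scheme and the structure sheaf is only $\pi$-adically complete; one must invoke the appropriate invariance of the (small) \'etale topos under such thickenings and check it is compatible with the inverse-limit presentations $\mfX\sim\varprojlim_h\mfX_h$ of semi-perfectoid objects. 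I expect the cleanest route is to reduce each of the three equivalences to the noetherian/tft case (where invariance of the \'etale site under nilpotent-by-$\pi$-adic thickening is standard) and then bootstrap to the perfectoid and semi-perfectoid cases by taking (co)limits along the $\varphi_h$, exactly as the definitions are set up to allow.
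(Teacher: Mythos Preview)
Your formal part (existence of the Quillen adjunctions, monoidality of left adjoints via compatibility with products and interval objects) is correct and matches the paper's argument essentially verbatim.

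The gap is in your treatment of the equivalence in part~(1). Your inputs (i)--(iii) do not suffice. Topological invariance of the small \'etale site tells you that \'etale maps over $\mfX$ correspond to \'etale maps over $\mfX_\sigma$, but the motivic mapping spaces are computed in the \emph{big} smooth site: a map $\mfY\to\mfX$ of smooth (semi-)perfectoid formal schemes is not \'etale in general. Concretely, writing $\mfX$ as \'etale over $\widehat{\mfT}^N$, a map $\mfY\to\mfX$ decomposes into a map $\mfY\to\widehat{\mfT}^N$ plus an \'etale lift; your input (i) controls the lift, but says nothing about the ``torus-coordinates'' part. Given $\bar f\colon\mfY_\sigma\to\widehat{\G}_m^N$, there are many lifts $f\colon\mfY\to\widehat{\mfT}^N$, and what must be shown is that the space of such lifts is $\widehat{\mfB}^1$-contractible. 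Your proposed bootstrapping from the tft case via limits along the $\varphi_h$ does not obviously handle this either, since the interval object changes from $\mfB^1$ to $\widehat{\mfB}^1$ and the motivic localizations do not a priori commute with these tower constructions.

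The paper's approach makes this contractibility step explicit. Following \cite[Proposition 1.4.21(2) and Corollary 1.4.24]{ayoub-rig}, one reduces (via \cite[Lemma 1.3.32]{ayoub-rig} and compactness of the generators) to showing that the unit $\Lambda(\mfX)\to\RR(\cdot)_{\sigma*}\LL(\cdot)^*_\sigma\Lambda(\mfX)$ is invertible. By \cite[Corollary 4.5.40]{ayoub-th2} this amounts to showing that, for a smooth $f\colon\mfX'\to\mfX$ and a section $s$ of $f_\sigma$, the presheaf $T_{\mfX',s}$ of $\mfX$-maps $\mfY\to\mfX'$ whose special fiber factors through $s$ is $\widehat{\mfB}^1$-locally contractible. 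One reduces \'etale-locally to $\mfX'=\mfX\times\widehat{\mfB}^N$ with $s$ the zero section, and then writes down an explicit homotopy using the multiplication map $\widehat{\mfB}^N\times\widehat{\mfB}^1\to\widehat{\mfB}^N$. This homotopy is the missing ingredient in your sketch: it is precisely what contracts the non-\'etale degrees of freedom that topological invariance alone cannot see.
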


\begin{proof}
	We again use the definition of motives as homotopy categories given in Remark \ref{modcatmot}. 
The fact that the maps of topoi induce such Quillen adjunctions follows formally from the functoriality of the construction of motives, and the isomorphisms $(\mfB^1)_\eta\cong\B^1$, $(\widehat{\mfB}^1)_\eta\cong\widehat{\B}^1$, $(\mfB^1)_\sigma\cong\A^1$, $(\widehat{\mfB}^1)_\sigma\cong\widehat{\A}^1$. Monoidality follows formally from the formulas $(\mfX\times\mfY)_\eta\cong\mfX_\eta\times\mfY_\eta$ and  $(\mfX\times\mfY)_\sigma\cong\mfX_\sigma\times\mfY_\sigma$. We are left to prove that the each adjunction of the first set is actually an equivalence. 

To this aim, it suffices to adapt  the proof of \cite[Proposition 1.4.21(2)]{ayoub-rig} to the (semi-)perfectoid setting, and then use the argument given in \cite[Corollary 1.4.24]{ayoub-rig}. For the convenience of the reader, we recall here the main steps of the proof and, for simplicity, we only consider the setting of perfectoid spaces. 

First of all, we remark that motives $\Lambda(\bar{X})$ with $\bar{X}$ \'etale over $\widehat{\G}_m^N$ form a set of  generators for the category $\PerfDA^{\eff}(k,\Lambda)$ since any smooth perfectoid space is locally of this form. Any such $\bar{X}$ admits a model $\mfX$ which is \'etale over $\widehat{\mfT}^N$ by \cite[Theorem 18.1.2]{EGAIV4}. In particular, the functor $\LL(\cdot)_\sigma^*$ sends a class of  generators to a class of generators. We remark that these generators are also \emph{compact}, i.e. the covariant functor they represent commutes with small direct sums (see Definition \ref{cpt}), as shown in \cite[Proposition 3.18]{vezz-fw}.

By means of \cite[Lemma 1.3.32]{ayoub-rig}, we are left to prove that for such an $\mfX$ the natural map $\Lambda(\mfX)\ra\RR(\cdot)_{\sigma*}\LL(\cdot)^*_\sigma\Lambda(\mfX)$ is invertible. 
We now fix a smooth map $f\colon \mfX'\ra\mfX$ between smooth perfectoid spaces over $K^\circ$ (here, as usual, smooth means locally \'etale over a relative perfectoid $n$-torus) and we fix a section $s\colon \mfX_\sigma\ra\mfX'_\sigma$ of the  map $f_\sigma$ induced on the special fibers. We let $T_{\mfX',s}$ be the presheaf of sets on $\sPerfSm\!/\mfX$  associating to $\mfY$ the set of maps $g\in\Hom_{\mfX}(\mfY,\mfX')$ such that $g_\sigma$ factors over $s$. By means of \cite[Corollary 4.5.40]{ayoub-th2} it suffices to prove that $ T_{\mfX,s}\otimes\Lambda\ra\Lambda$ induces a weak equivalence in $\Ch(\Sh_{\et}(\Sm/\mfX),\Lambda)$ with respect to the $\widehat{\B}^1$-localisation of the usual ($\et$-local, projective) model structure on complexes of sheaves %
  (see Remark \ref{modcatmot}).

We can make some further reductions by restricting to an \'etale covering $\{\mfX_i\}$ of $\mfX$ by means of the perfectoid analogue of \cite[Lemma 1.4.22]{ayoub-rig}. In particular, we can suppose that $\mfX'$ is \'etale over $\mfX''\colonequals \mfX\times\widehat{\mfB}^N$. We remark that the map $T_{\mfX',s}\ra T_{\mfX'',s''}$ (where $s''$ is the induced section) is invertible after \'etale sheafification: an explicit inverse is defined by associating to a section $g\colon \mfY\ra\mfX''$ the pullback $g\times_{\mfX''}\mfX'$ as shown in the second step of the proof of \cite[Proposition 1.4.21]{ayoub-rig}. We can then suppose that $\mfX'=\mfX\times\widehat{\mfB}^N$. Without loss of generality, we can also suppose that the section $s$ is the zero section. In this case, we can construct an explicit homotopy $\Hom(\mfY,\widehat{\mfB}^1)\times T_{\mfX',s}(\mfY)\ra T_{\mfX',s}(\mfY)$ between the identity of  $T_{\mfX',s}(\mfY)$ and the constant zero map, as follows:
$$
\mfY\ra\mfX\times\widehat{\mfB}^N\times\widehat{\mfB}\ra\mfX\times\widehat{\mfB}^N
$$
where the second arrow is induced by the multiplication $\widehat{\mfB}^N\times\widehat{\mfB}\ra\widehat{\mfB}^N$ (see the third step of the proof of \cite[Proposition 4.5.42]{ayoub-th2}).
\end{proof}

\begin{rmk}\label{xichi}
	
In particular, we obtain the following monoidal, triangulated left adjoint functors
$$
	\begin{aligned}
\xi\colonequals{\LL(\cdot)^*_\eta\RR(\cdot)_{\sigma*}}\colon&	\sPerfDA^{\eff}_{\hat{\A}^1}(k,\Lambda) {\longrightarrow}\sPerfDA^{\eff}_{\hat{\B}^1}(K,\Lambda) 
\\
\xi\colonequals{\LL(\cdot)^*_\eta\RR(\cdot)_{\sigma*}}\colon&	\PerfDA^{\eff}(k,\Lambda) {\longrightarrow}\PerfDA^{\eff}(K,\Lambda)
\\
\xi\colonequals{\LL(\cdot)^*_\eta\RR(\cdot)_{\sigma*}}\colon&	\DA^{\eff}(k,\Lambda) {\longrightarrow}\RigDA^{\eff}(K,\Lambda)
	\end{aligned}
	$$
whose right adjoints $\LL(\cdot)_\sigma^*\RR(\cdot)_{\eta*}$ will be typically denoted by $\chi$.
\end{rmk}

\begin{cor}
	Let $K$ be a perfectoid field of positive characteristic. If we fix a section $k\ra K^{\flat\circ}$, the functor $\RR(\cdot)_{\sigma*}$ is canonically isomorphic to $\LL(\cdot\widehat{\times}_{\Spec k}\Spf K^\circ)$. In particular, the motive $\Lambda(\mfX)$ is canonically isomorphic to $\Lambda(\bar{\mfX}\widehat{\times}_{\Spec k}\Spf K^\circ)$.
	\end{cor}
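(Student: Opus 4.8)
The plan is to realize $\LL(\cdot\widehat{\times}_{\Spec k}\Spf K^\circ)$ as a section of the equivalence $\LL(\cdot)^*_\sigma$ of Theorem~\ref{dd}, and then conclude that it must agree with the quasi-inverse $\RR(\cdot)_{\sigma*}$.

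First I would check that the assignment $b\colon\bar{\mfX}\mapsto\bar{\mfX}\widehat{\times}_{\Spec k}\Spf K^\circ$ --- base change along the fixed section $k\to K^\circ$ followed by $\pi$-adic completion --- is a well-defined functor $\sPerfSm/k\to\sPerfSm/K^\circ$, and likewise $\PerfSm/k\to\PerfSm/K^\circ$ and $\Sm/k\to\FormSm/K^\circ$. Indeed, if $\bar{\mfX}$ is locally of the form $\bar X_0\times_{\G_m^N,\varphi_h}\widehat{\G}_m^N$ with $\bar X_0$ étale over $\G_m^N\times\G_m^M$, then $b(\bar{\mfX})$ is locally $(\bar X_0\widehat{\times}_kK^\circ)\times_{\mfT^N,\varphi_h}\widehat{\mfT}^N$ and $\bar X_0\widehat{\times}_kK^\circ$ is (formally) étale over $\mfT^N\times\mfT^M$, since the $\pi$-adic completion of an étale morphism is formally étale. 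The functor $b$ commutes with products and with fibre products over the base, sends étale coverings to étale coverings and relative Frobenius morphisms to relative Frobenius morphisms, and carries $\widehat{\A}^1,\A^1$ to $\widehat{\mfB}^1,\mfB^1$ respectively. By the same reasoning used for $\LL\iota^*$ and $\LL(\cdot)^*_\sigma$ (see the paragraph before Theorem~\ref{derij}, Remark~\ref{modcatmot}, and the proof of Theorem~\ref{dd}), the left Kan extension of $b$ on presheaves is left Quillen for the relevant localized projective model structures, hence descends to a monoidal triangulated left adjoint functor $\LL b^*=\LL(\cdot\widehat{\times}_{\Spec k}\Spf K^\circ)$ on motives, with $\LL b^*\,\Lambda(\bar{\mfX})=\Lambda(\bar{\mfX}\widehat{\times}_{\Spec k}\Spf K^\circ)$.

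Next I would observe that the composite $(\cdot)_\sigma\circ b$ is canonically the identity on $\sPerfSm/k$ (and on $\PerfSm/k$, $\Sm/k$): for any $k$-scheme $\bar Y$, transitivity of base change gives
\[
\bigl(\bar Y\widehat{\times}_{\Spec k}\Spf K^\circ\bigr)_\sigma=\bar Y\times_{\Spec k}\bigl(\Spf K^\circ\times_{\Spf K^\circ}\Spec k\bigr)=\bar Y\times_{\Spec k}\Spec k=\bar Y,
\]
because $\Spec k\to\Spec K^\circ$ is a section of the structure map and the special fibre of $\Spf K^\circ$ is $\Spec k$. Since left Kan extension along a composite is the composite of left Kan extensions and along the identity is the identity, this upgrades to a canonical isomorphism of functors $\LL(\cdot)^*_\sigma\circ\LL b^*\cong\id$ on motives.

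Finally, because $\LL(\cdot)^*_\sigma$ is an equivalence (Theorem~\ref{dd}) with right adjoint, hence quasi-inverse, $\RR(\cdot)_{\sigma*}$, composing the isomorphism $\LL(\cdot)^*_\sigma\circ\LL b^*\cong\id$ on the left with $\RR(\cdot)_{\sigma*}$ and using $\RR(\cdot)_{\sigma*}\circ\LL(\cdot)^*_\sigma\cong\id$ gives the asserted canonical isomorphism $\RR(\cdot)_{\sigma*}\cong\LL(\cdot\widehat{\times}_{\Spec k}\Spf K^\circ)$. Evaluating on the representable motive of $\mfX$ with special fibre $\bar{\mfX}=\mfX_\sigma$ then yields $\Lambda(\mfX)\cong\RR(\cdot)_{\sigma*}\LL(\cdot)^*_\sigma\Lambda(\mfX)=\RR(\cdot)_{\sigma*}\Lambda(\bar{\mfX})\cong\Lambda(\bar{\mfX}\widehat{\times}_{\Spec k}\Spf K^\circ)$, which is the last claim. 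The only step that is not pure formalism is the first one --- checking that $b$ really lands in the respective geometric categories and respects the topologies and interval objects, so that $\LL b^*$ descends to motives --- but this is of exactly the same nature as the verifications already carried out for Theorems~\ref{derij} and~\ref{dd}; everything else is the uniqueness of quasi-inverses.
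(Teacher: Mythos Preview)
Your proposal is correct and follows essentially the same approach as the paper: both identify $\LL(\cdot\widehat{\times}_{\Spec k}\Spf K^\circ)$ as a one-sided inverse to the equivalence $\LL(\cdot)^*_\sigma$ of Theorem~\ref{dd} via the identity $(\bar X\widehat{\times}_{\Spec k}\Spf K^\circ)_\sigma\cong\bar X$, and then invoke uniqueness of quasi-inverses. The paper's proof is a two-line compression of yours; the extra care you take in verifying that $b$ lands in the right geometric category and respects the topology and interval object is implicit in the paper but worth spelling out.
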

	
	\begin{proof}
Since $\RR(-)_{\sigma*}$ is an inverse of $\LL(-)_\sigma^*$ it suffices to prove that for any object $\bar{X}$ in $\sPerfSm/k$ the motive $\LL(-)_\sigma^*\Lambda(\bar{X}\widehat{\times}_{\Spec k}\Spf K^\circ)$ is isomorphic to $\Lambda(\bar{X})$ which is clear.
	\end{proof}

We complete the list of natural functors of motives with the ones induced by the analytification procedure   and the canonical ``perfection'' over a field of characteristic $p$.

\begin{thm}[{\cite[Proposition 1.4.14]{ayoub-rig}}]\label{anmot}
		The functor associating to an algebraic variety $X$ its analytification  $X^{\an}$ (see \cite[Section 1.1.3]{ayoub-rig}) induces the following (Quillen) monoidal, triangulated adjunction:
		$$
		\adj{\LL\Rig^*}{\DA^{\eff}(K,\Lambda)}{\RigDA^{\eff}(K,\Lambda)}{\RR\Rig_*}
		$$
\end{thm}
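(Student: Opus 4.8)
The plan is to realise $\LL\Rig^*$ as the derived functor attached, via the mechanism recalled just before Theorem \ref{derij}, to the analytification functor $\Rig^*\colon\Sm/K\to\RigSm/K$, $X\mapsto X^{\an}$, and then to check that it survives both the $\et$-localisation and the $\A^1$-/$\B^1$-localisation, invoking the universal property of left Bousfield localisation exactly as in Remark \ref{modcatmot}.

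First I would collect the elementary properties of $\Rig^*$: it preserves fibre products, sends $\Spec K$ to $\Spa K$, and sends étale morphisms (resp.\ étale coverings) of smooth $K$-varieties to étale morphisms (resp.\ admissible étale coverings) of rigid analytic varieties --- standard facts about analytification, see \cite[Section 1.1.3]{ayoub-rig}. Being continuous for the étale topologies, $\Rig^*$ therefore yields a Quillen adjunction $\LL\Rig^*\colon\catD(\Sh_{\et}(\Sm/K,\Lambda))\rightleftarrows\catD(\Sh_{\et}(\RigSm/K,\Lambda))\colon\RR\Rig_*$ between the $\et$-local projective model structures, with $\LL\Rig^*\Lambda(X)=\Lambda(X^{\an})$ on representables. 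Since $\Lambda(X)\otimes\Lambda(Y)\cong\Lambda(X\times Y)$ and $\Rig^*$ preserves finite products and the terminal object, $\LL\Rig^*$ is strong symmetric monoidal, hence $\RR\Rig_*$ is lax monoidal.

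Next I would descend to motives. By Remark \ref{modcatmot}, $\DA^{\eff}(K,\Lambda)$ is the left Bousfield localisation of $\catD(\Sh_{\et}(\Sm/K,\Lambda))$ at (the shifts of) the maps $\Lambda(\A^1\times X)\to\Lambda(X)$, and $\RigDA^{\eff}(K,\Lambda)$ is the corresponding localisation with $\B^1$ in place of $\A^1$. By the universal property of left Bousfield localisation, $(\LL\Rig^*,\RR\Rig_*)$ descends to a Quillen adjunction between the localised model structures provided $\LL\Rig^*$ sends each localising map $\Lambda(\A^1\times X)\to\Lambda(X)$ to an isomorphism in $\RigDA^{\eff}(K,\Lambda)$. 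As $\Lambda(\A^1\times X)\to\Lambda(X)$ is the image of $\Lambda(\A^1)\to\Lambda(\Spec K)$ under the left Quillen functor $-\otimes\Lambda(X)$ and $\LL\Rig^*$ is monoidal, this reduces to showing that $\Lambda(\A^{1,\an})\to\Lambda(\Spa K)$ is invertible in $\RigDA^{\eff}(K,\Lambda)$.

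The one genuine point is this last invertibility, which I would obtain by an explicit $\B^1$-homotopy. Scalar multiplication restricts to a morphism of rigid analytic varieties $H\colon\A^{1,\an}\times\B^1\to\A^{1,\an}$, $(x,t)\mapsto tx$, with $H|_{t=1}=\id_{\A^{1,\an}}$ and $H|_{t=0}$ the constant map at the origin. Thus $H$ identifies, in $\RigDA^{\eff}(K,\Lambda)$, the identity of $\Lambda(\A^{1,\an})$ with the composite $\Lambda(\A^{1,\an})\to\Lambda(\Spa K)\xrightarrow{\ 0\ }\Lambda(\A^{1,\an})$; since $\Lambda(\Spa K)\xrightarrow{\ 0\ }\Lambda(\A^{1,\an})\to\Lambda(\Spa K)$ is the identity, this gives $\Lambda(\A^{1,\an})\cong\Lambda(\Spa K)$. (If one prefers not to use the non--quasi-compact variety $\A^{1,\an}$ directly, one may instead write it as the increasing admissible union of closed discs, deduce $\Lambda(\A^{1,\an})\cong\hocolim_n\Lambda(\B^1)$ by descent, and observe that the transition maps become identities; the outcome is the same.) Finally, the descended $\LL\Rig^*$ is monoidal by the above, it is triangulated because it is the homotopy functor of a left Quillen functor between stable model categories, and $\RR\Rig_*$ is its right adjoint; this is the asserted adjunction. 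The only real obstacle in the argument is verifying the continuity of $\Rig^*$ for the étale topology on rigid spaces --- that étale covers analytify to admissible étale covers and that the Grothendieck topology on the target behaves well --- which is where the input of \cite{ayoub-rig} is actually needed rather than formal nonsense.
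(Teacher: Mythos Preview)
Your proof is correct and follows the standard approach. Note, however, that the paper does not actually give a proof of this statement: it is stated as a citation of \cite[Proposition 1.4.14]{ayoub-rig} and left unproved in the text. So there is no ``paper's own proof'' to compare against here; what you have written is essentially the argument one would expect to find in Ayoub's reference, and it fits the formal framework laid out in Remark~\ref{modcatmot} and the discussion preceding Theorem~\ref{derij}.

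One small comment on exposition: your reduction to the single map $\Lambda(\A^{1,\an})\to\Lambda(\Spa K)$ via monoidality is fine, but it tacitly uses that tensoring with a representable preserves $\B^1$-local equivalences. This is automatic because representables are cofibrant in the projective model structure and the model structure is monoidal; it might be worth saying this explicitly. Your $\B^1$-contraction of $\A^{1,\an}$ via $(x,t)\mapsto tx$ is the standard argument and is well-defined on each closed disc (since $|t|\leq 1$ forces $|tx|\leq|x|$), hence globally on the admissible union; the alternative via $\hocolim_n\Lambda(\B^1(0,|\pi|^{-n}))$ that you mention is also fine and perhaps slightly cleaner if one is worried about working directly with non--quasi-compact objects.
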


\begin{thm}[{\cite[Section 6]{vezz-fw}}]\label{LPerf}
	The functor associating to a  an algebraic variety $X$ over $k$ [resp. a rigid analytic variety $X$ over $K^\flat$] its [completed] perfection  $X^{\Perf}$  induces the following (Quillen) monoidal, triangulated adjunctions:
	$$
	\adj{\LL\Perf^*}{\DA^{\eff}_{\Frobet}(k,\Lambda)}{\PerfDA^{\eff}(k,\Lambda)}{\RR\Perf_*}
	$$
		$$
	\adj{\LL\Perf^*}{\RigDA^{\eff}_{\Frobet}(K^\flat,\Lambda)}{\PerfDA^{\eff}(K^\flat,\Lambda)}{\RR\Perf_*}
	$$
\end{thm}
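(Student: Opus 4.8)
The plan is to derive both adjunctions from the functoriality package for motives recalled before Theorem \ref{derij}, exactly as in the proofs of Theorems \ref{derij} and \ref{dd}. Concretely, once the perfection is upgraded to an honest functor between the underlying geometric sites which is continuous for the relevant topologies and compatible with products and with the chosen interval objects, the Quillen adjunction on presheaf categories descends to the motivic categories, and its monoidality and triangulatedness come for free.

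First I would pin down the functor. For an algebraic variety $X$ over $k$ set $X^{\Perf}\colonequals\varprojlim\bigl(\cdots\xrightarrow{\Frob}X\xrightarrow{\Frob}X\bigr)$, and for a rigid analytic variety $X$ over $K^\flat$ let $X^{\Perf}$ be its completed perfection. When $X$ is smooth --- locally \'etale over $\G_m^N$, resp.\ over $\T^N\times\T^M$ --- passage to $X^{\Perf}$ commutes with \'etale localization (topological invariance of the \'etale site under the universal homeomorphism $X^{\Perf}\to X$, a cofiltered limit of Frobenii), so that $X^{\Perf}$ is locally \'etale over $\widehat{\G}_m^N$, resp.\ over $\widehat{\T}^{N+M}$; hence it lies in $\PerfSm/k$, resp.\ in $\PerfSm/K^\flat$, and $\Frob$ is invertible on it. Thus $X\mapsto X^{\Perf}$ is a genuine functor to the relevant site, and I would record the three facts used below: (i) it sends \'etale covers to \'etale covers; (ii) it sends every relative Frobenius $X\to X\times_{\Frob}k$ (resp.\ $X\to X\times_{\Frob}K^\flat$) to an \emph{isomorphism}, since the relative Frobenius of a perfect(oid) object over the perfect field $k$ (resp.\ $K^\flat$) is an isomorphism; (iii) it preserves the final object and finite products, with $\Perf(\A^1)\cong\widehat{\A}^1$ and $\Perf(\B^1)\cong\widehat{\B}^1$.

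Next I would feed this into Remark \ref{modcatmot}. The functor $\Perf\colon\Sm/k\to\PerfSm/k$ (resp.\ $\RigSm/K^\flat\to\PerfSm/K^\flat$) induces a Quillen pair $\LL\Perf^*\colon\catD\Psh(\Sm/k,\Lambda)\rightleftarrows\catD\Psh(\PerfSm/k,\Lambda)\colon\RR\Perf_*$ with $\LL\Perf^*\Lambda(X)\cong\Lambda(X^{\Perf})$. By (i), $\LL\Perf^*$ sends every map $\Lambda(\mcU)\to\Lambda(X)$ attached to an \'etale hypercover $\mcU$ of $X$ to an $\et$-local equivalence, and by (ii) it sends the relative Frobenius maps to isomorphisms; since these generate the $\Frobet$-localization, $\LL\Perf^*$ factors through $\catD\Sh_{\et}(\PerfSm/k,\Lambda)$. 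By (iii) it sends each generator $\Lambda(X\times\A^1)\to\Lambda(X)$ of the $\A^1$-localization to a generator $\Lambda(X^{\Perf}\times\widehat{\A}^1)\to\Lambda(X^{\Perf})$ of the $\widehat{\A}^1$-localization, hence it descends one step further to the Verdier quotients and yields a left adjoint $\LL\Perf^*\colon\DA^{\eff}_{\Frobet}(k,\Lambda)\to\PerfDA^{\eff}(k,\Lambda)$; its right adjoint exists by Brown representability and is the functor induced by $\RR\Perf_*$. Monoidality follows from (iii) together with $\Lambda(X)\otimes\Lambda(Y)\cong\Lambda(X\times Y)$, and triangulatedness is automatic for a left-derived functor between these triangulated categories. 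The rigid-analytic case is verbatim the same, with $(\Sm/k,\A^1,\widehat{\A}^1,\G_m^N)$ replaced by $(\RigSm/K^\flat,\B^1,\widehat{\B}^1,\T^N\times\T^M)$.

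The main obstacle is fact (i), the compatibility of perfection with the \'etale topology; the rest is formal bookkeeping. Over $k$ this is the classical topological invariance of the (small and big) \'etale site under a universal homeomorphism, applied to the cofiltered tower of Frobenius morphisms. Over $K^\flat$ it is the analogous statement for adic spaces, which is where the perfectoid input enters and is precisely what the cited \cite[Section 6]{vezz-fw} provides: one localizes to the model $\widehat{\T}^N$, uses that $\mcO_{X^{\Perf}}$ is the completed filtered colimit of the $\mcO_X$-algebras along Frobenius, and invokes the unique lifting of \'etale algebras along this tower --- equivalently, the equivalence of \'etale sites between a perfectoid space and its tilt --- to conclude that the \'etale covers are unchanged.
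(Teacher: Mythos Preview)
The paper does not give its own proof of this statement: Theorem~\ref{LPerf} is stated with a citation to \cite[Section 6]{vezz-fw} and no proof block follows, so there is no in-paper argument to compare against. Your proposal is the natural reconstruction of that cited argument, and its outline is correct: perfection is a product-preserving functor of sites sending $\A^1\mapsto\widehat{\A}^1$ (resp.\ $\B^1\mapsto\widehat{\B}^1$), it is continuous for the \'etale topology by topological invariance under universal homeomorphisms, and it collapses relative Frobenius maps; the motivic functoriality of Remark~\ref{modcatmot} then yields the desired Quillen adjunction, with monoidality from product preservation.

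Two small corrections. First, you do not need Brown representability for the right adjoint: the pair $(\Perf^*,\Perf_*)$ is already a Quillen adjunction on presheaves, and since $\LL\Perf^*$ sends the generating local equivalences to local equivalences, the Bousfield-localized model structures give a Quillen pair directly, with right adjoint $\RR\Perf_*$. Second, your sentence about the \'etale site of $X^{\Perf}$ over $K^\flat$ conflates two ingredients: what is used is the topological invariance of the \'etale site along the Frobenius tower (so that \'etale covers of $X$ and of $X^{\Perf}$ correspond), not the tilting equivalence of \'etale sites between a perfectoid and its tilt. The latter is irrelevant here since both source and target live over $K^\flat$.
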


\section{The commutativity}
\label{secmain}

In this section we still assume that $K$ is a perfectoid field with residue $k$ (see Assumpion \ref{assKisperf}). From now on, we also fix a section $k\ra K^{\flat\circ}$ and we assume that $\Q\subset\Lambda$. %

We can rephrase Scholze's tilting equivalence between perfectoid spaces over $K$ and over $K^\flat$ \cite[Theorem 1.7]{scholze} in motivic terms by saying that the categories $\PerfDA^{\eff}(K,\Lambda)$ and $\PerfDA^{\eff}(K^\flat,\Lambda)$ are equivalent. In \cite{vezz-fw} we descended this result to the rigid analytic situation, by proving the following:

\begin{thm}[{\cite{vezz-fw}}]\label{mottilteq}
	Let $K$ be a perfectoid field. 
	There is a canonical monoidal, triangulated equivalence 
	$$\mfG= \RR j_*\circ\LL\iota^*\colon \RigDA^{\eff}_{\Frobet}(K,\Lambda)\cong\PerfDA^{\eff}(K,\Lambda).$$
	In particular, for any perfectoid field $K$ of mixed characteristic, we obtain a canonical monoidal, triangulated equivalence 
	$$
	\mfG_{K,K^\flat}\colon \RigDA^{\eff}(K,\Lambda)\cong\RigDA^{\eff}_{\Frobet}(K^\flat,\Lambda).
	$$
\end{thm}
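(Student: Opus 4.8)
The plan is to realise $\mfG$ as a composite of equivalences passing through the category of semi-perfectoid motives, by means of the two adjunctions of Theorem \ref{derij}. First, a reduction: $\sPerfDA^{\eff}_{\Frobet,\B^1}(K,\Lambda)=\sPerfDA^{\eff}_{\Frobet,\widehat{\B}^1}(K,\Lambda)$, because by the remark following Theorem \ref{dd} every $\B^1$-homotopy equivalence of semi-perfectoid motives is a $\widehat{\B}^1$-homotopy equivalence, while conversely $\widehat{\B}^1\to\B^1$ is already a $\B^1$-homotopy equivalence — the tower $\widehat{\B}^1\sim\varprojlim_h\B^1$ collapses to a point after $\B^1$-localisation, its transition maps becoming identities. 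Hence the composite $\RR j_*\circ\LL\iota^*$ (with $\RR j_*$ understood in its evident $\Frobet$-variant, which makes sense since relative Frobenii are invertible on perfectoid spaces) is defined on $\RigDA^{\eff}_{\Frobet}(K,\Lambda)$, and it suffices to prove that $\LL\iota^*$ and $\RR j_*$ are equivalences.

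\emph{The functor $\LL\iota^*$.} The category $\sPerfDA^{\eff}_{\Frobet,\widehat{\B}^1}(K,\Lambda)$ is generated by the compact motives $\Lambda(X)$ of spaces $X=X_0\times_{\T^N}\widehat{\T}^N$ with $X_0\to\T^N\times\T^M$ étale, and $X\sim\varprojlim_h X_h$ with $X_h=X_0\times_{\T^N,\varphi_h}\T^N\in\RigSm/K$. Since all these adic spaces are sheafy, $\Frobet$-sheafification is compatible with this cofiltered limit (Proposition \ref{spgenfib} and \cite[Lemma 3.5.1(1) and Section 2.4]{huber}), and one deduces that $\Lambda(X)\cong\Lambda(X_0)$ in $\sPerfDA^{\eff}_{\Frobet,\widehat{\B}^1}(K,\Lambda)$: each transition map $X_{h+1}\to X_h$ is inverted in $\RigDA^{\eff}_{\Frobet}(K,\Lambda)$, being a relative Frobenius when $\car K=p$ (hence inverted by the $\Frobet$-localisation), and a finite étale covering map of $p$-power degree whose effect on motives is invertible over the $\Q$-algebra $\Lambda$ when $\car K=0$ (alternatively, in the latter case one transports the equal-characteristic-$p$ statement via Scholze's tilting equivalence for perfectoid spaces, as is done in \cite{vezz-fw}). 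Thus $\LL\iota^*$ meets a generating family and is essentially surjective; full faithfulness, i.e.\ invertibility of the unit $\Lambda(Y)\to\RR\iota_*\LL\iota^*\Lambda(Y)$ for $Y\in\RigSm/K$, follows from the same approximation after reducing to compact generators via \cite[Lemma 1.3.32]{ayoub-rig}.

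\emph{The functor $\RR j_*$}, equivalently its left adjoint $\LL j^*$. Here the geometric input is that, for $X=X_0\times_{\T^N}\widehat{\T}^N$ as above, the full perfection $X^{\Perf}=X_0\times_{\T^{N+M}}\widehat{\T}^{N+M}\in\PerfSm/K$ projects onto $X$ by a morphism that is invertible in $\sPerfDA^{\eff}_{\Frobet,\widehat{\B}^1}(K,\Lambda)$: étale-locally it is a cofiltered limit of relative perfections of the torus in the remaining $M$ coordinates, each stage of which is inverted exactly as above. This yields essential surjectivity, while full faithfulness reduces, as before, to the fact that the fully faithful inclusion $\PerfSm/K\hookrightarrow\sPerfSm/K$ is compatible with the $\Frobet$-topology and with the interval $\widehat{\B}^1$. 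Since $\LL\iota^*$ and $\LL j^*$ are monoidal and triangulated by Theorem \ref{derij}, and a monoidal equivalence has a monoidal quasi-inverse, $\mfG=\RR j_*\circ\LL\iota^*$ is a monoidal, triangulated equivalence.

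\emph{The mixed-characteristic statement.} If $\car K=0$ then $\car K^\flat=p$ and, since there is no relative Frobenius over a characteristic-$0$ base, $\RigDA^{\eff}_{\Frobet}(K,\Lambda)=\RigDA^{\eff}(K,\Lambda)$. Scholze's tilting equivalence identifies the categories of perfectoid spaces over $K$ and over $K^\flat$ together with their étale topologies \cite[Theorem 1.7]{scholze}, sending $\widehat{\B}^1$ to $\widehat{\B}^1$; by the functoriality of the motivic construction (as used throughout Section \ref{secsp}) it induces a monoidal, triangulated equivalence $\PerfDA^{\eff}(K,\Lambda)\cong\PerfDA^{\eff}(K^\flat,\Lambda)$. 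Composing $\mfG$ for $K$, this equivalence and the quasi-inverse of $\mfG$ for $K^\flat$ yields the asserted monoidal, triangulated equivalence $\mfG_{K,K^\flat}\colon\RigDA^{\eff}(K,\Lambda)\cong\RigDA^{\eff}_{\Frobet}(K^\flat,\Lambda)$. The main obstacle throughout is the full-faithfulness step: one must show that enlarging the test objects from rigid (resp.\ perfectoid) spaces to semi-perfectoid ones creates no new morphisms between motives, which rests on combining $\Frobet$-hyperdescent, Huber's approximation $X\sim\varprojlim_h X_h$ of sheafy adic spaces, and the invertibility of the $p$-power transition maps (available through the $\Frobet$-topology when $\car K=p$, and through $\Q$-linearity or the perfectoid tilting equivalence when $\car K=0$) — the technical heart of \cite{vezz-fw}.
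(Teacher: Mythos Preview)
This theorem carries the citation \cite{vezz-fw} in its header and is not proved in the present paper; it is quoted as an input. So there is no ``paper's own proof'' to compare against here. That said, your sketch does follow the architecture of the argument in \cite{vezz-fw}: factor $\mfG$ through $\sPerfDA^{\eff}_{\widehat{\B}^1}(K,\Lambda)$ and show that $\LL\iota^*$ and $\LL j^*$ are equivalences.

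However, there is a genuine gap in your characteristic-$0$ step. You assert that the transition maps $X_{h+1}\to X_h$ are finite \'etale of $p$-power degree and that this alone makes them invertible on motives over a $\Q$-algebra $\Lambda$. That is false: for $f\colon Y\to X$ finite \'etale of degree $d$, the transfer gives $\Lambda_{\tr}(X)$ as a \emph{retract} of $\Lambda_{\tr}(Y)$ (since $f_!f^*\cong d\cdot\id$), not an isomorphism --- think of a trivial double cover, whose motive is $\Lambda\oplus\Lambda$. So invertibility of these maps in $\car K=0$ is not a formality; it is exactly the hard content of \cite{vezz-fw}. Your proposed alternative, ``transport the equal-characteristic-$p$ statement via Scholze's tilting for perfectoid spaces'', does not close the gap either: the spaces $X_h$ are rigid analytic, not perfectoid, and passing to their perfections already presupposes the very comparison (perfection is motivically invertible) that you are trying to establish.

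What \cite{vezz-fw} actually does in characteristic $0$ is an explicit approximation argument on the cubical singular complexes $\Sing^{\B^1}$: one shows, roughly, that any map from a rigid test object into $X=\varprojlim_h X_h$ can be homotoped (for the $\B^1$- or $\widehat{\B}^1$-interval) to one factoring through some finite level $X_h$, and similarly for homotopies between such maps. This is the content behind \cite[Proposition 5.4, Theorem 6.9, Corollary 7.15]{vezz-fw} (referenced in the proof of Theorem~\ref{premain} of the present paper), and it is where the non-archimedean analysis genuinely enters. Your final paragraph correctly identifies full faithfulness as the crux and correctly names Huber's approximation $X\sim\varprojlim_h X_h$ as an ingredient, but the mechanism you invoke for inverting the tower is the wrong one.
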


Since the residue field $k$ of $K$ coincides with the one of $K^\flat$ \cite[Lemma 3.4]{scholze} we get by Remark \ref{xichi} two functors
$$\xi\colon\DA^{\eff}(k,\Lambda)\mathop{\longrightarrow}_{\sim}^{\RR(\cdot)_{\sigma*}}\FormDA^{\eff}(K^\circ,\Lambda)\stackrel{\LL(\cdot)_{\eta}^*}{\longrightarrow}\RigDA^{\eff}(K,\Lambda)$$
$$\xi^\flat\colon\DA^{\eff}(k,\Lambda)\mathop{\longrightarrow}_{\sim}^{\RR(\cdot)_{\sigma*}}\FormDA^{\eff}(K^{\flat\circ},\Lambda)\stackrel{\LL(\cdot)_{\eta}^*}{\longrightarrow}\RigDA^{\eff}(K^\flat,\Lambda)$$
whose right adjoints will be denoted by $\chi$ and $\chi^\flat$ respectively.

The aim of this section is to prove that they are compatible with the tilting equivalence. In other words, we want to prove the following result.

\begin{thm}\label{main}
	The following diagram is commutative, up to an invertible natural transformation.
		\[
				\xymatrix{
					&\DM^{\eff}(k,\Lambda)\ar[dl]_{\xi}\ar[dr]^{\xi^\flat}\\
					\RigDM^{\eff}(K,\Lambda) 
					\ar@{<->}[rr]^{\sim}_{\mfG_{K,K^\flat}}&&\RigDM^{\eff}(K^\flat,\Lambda) 
				}
		\]
\end{thm}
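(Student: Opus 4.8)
The plan is to reduce the statement to a comparison on a convenient set of generators and then to a very explicit computation. First I would observe that, since $\Q\subset\Lambda$, the equivalences of Theorem \ref{DADM} let us work throughout with motives \emph{without} transfers, so it suffices to exhibit an invertible natural transformation between the composites $\mfG_{K,K^\flat}\circ\xi$ and $\xi^\flat$ as functors $\DA^{\eff}(k,\Lambda)\to\RigDA^{\eff}_{\Frobet}(K^\flat,\Lambda)$. Both composites are monoidal, triangulated, and commute with small sums (being built out of left adjoints $\LL(\cdot)^*_\eta$, $\LL\Rig^*$, $\LL\iota^*$, $\RR j_*$ and the inverse equivalence $\RR(\cdot)_{\sigma*}$, the latter being an equivalence by Theorem \ref{dd}), so a natural transformation between them is determined by its values on a generating family of compact objects, and invertibility may be checked there. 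As a generating family I would take the motives $\Lambda(\bar X)$ with $\bar X$ étale over $\G_{m,k}^N$, which generate $\DA^{\eff}(k,\Lambda)$ since every smooth scheme over $k$ is Zariski-locally of this form; their compactness is recorded in \cite{vezz-fw}.

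The real content is the construction of the transformation. Because the definition of $\mfG$ goes through $\PerfDA^{\eff}$, I would not attempt a direct comparison but instead factor everything through the auxiliary categories of semi-perfectoid motives introduced in Section \ref{secsp}. Concretely, I would build a diagram of Quillen adjunctions relating $\DA^{\eff}(k)$, $\PerfDA^{\eff}(k)$, $\FormDA^{\eff}(K^\circ)$ and $\FormDA^{\eff}(K^{\flat\circ})$, $\sPerfDA^{\eff}(K^\circ)$ and $\sPerfDA^{\eff}(K^{\flat\circ})$, $\RigDA^{\eff}(K)$ and $\RigDA^{\eff}_{\Frobet}(K^\flat)$, using: the special-fibre equivalences $\RR(\cdot)_{\sigma*}$ of Theorem \ref{dd}(1); the generic-fibre functors of Theorem \ref{dd}(2); the inclusions $\iota,j$ of Theorem \ref{derij}; the perfection functors of Theorem \ref{LPerf}; and the analytification $\LL\Rig^*$ of Theorem \ref{anmot}. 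The key geometric input is that all these functors are induced by morphisms of sites, and on the \emph{level of sites} there is an evident compatibility: the special fibre of a smooth formal model, the perfection, the passage from $K^\circ$ to $K^{\flat\circ}$ via the chosen section $k\to K^{\flat\circ}$, and the semi-perfectoidification all commute with one another on representable objects $X_0\times_{\T^N}\widehat{\T}^N$, because the residue field of $K$ and of $K^\flat$ is the same $k$ and the whole picture over the residue field is ``untilted'' canonically. This gives a genuine $2$-commutative diagram of left-derived functors up to the usual coherence, whence a natural transformation $\mfG_{K,K^\flat}\circ\xi\Rightarrow\xi^\flat$ obtained by pasting unit/counit maps of the adjunctions (mate calculus). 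This is essentially a generalization of the arguments of \cite{vezz-fw} from the perfectoid setting to the semi-perfectoid one, and I would carry it out there; it is expected to be the main obstacle, since the bookkeeping of the many adjunctions and the verification that the relevant localizations (étale, $\Frobet$, and the interval localizations over $\B^1$ versus $\widehat{\B}^1$) are respected is delicate — this is precisely where Theorem \ref{dd} and the compatibility of the $\B^1$- and $\widehat{\B}^1$-localizations are used.

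Once the transformation is in hand, invertibility is a computation on the generators $\Lambda(\bar X)$, $\bar X$ étale over $\G_{m,k}^N$. Lifting $\bar X$ to a formally étale $\mfX/\widehat{\mfT}^N$ over $K^{\flat\circ}$ (possible by \cite{EGAIV4} as in the proof of Theorem \ref{dd}), its generic fibre $\mfX_\eta$ is a smooth semi-perfectoid space over $K^\flat$, and one checks that $\xi^\flat\Lambda(\bar X)=\LL(\cdot)^*_\eta\RR(\cdot)_{\sigma*}\Lambda(\bar X)$ is computed by $\Lambda$ of (the base change along $k\to K^{\flat\circ}$ of) $\bar X$; similarly on the $K$-side through $\xi$ followed by $\mfG_{K,K^\flat}$. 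Then the explicit description of $\mfG$ on perfectoid motives from Theorem \ref{mottilteq}, combined with the fact that the tilt of the perfectoid torus $\widehat{\T}^N$ over $K$ is the perfectoid torus over $K^\flat$ and that étale maps lift and tilt compatibly, identifies the two sides. I would isolate this as a separate statement (the ``premain'' theorem alluded to in the introduction), reducing it by induction on $N$ and a Mayer--Vietoris/localization argument to the single case $\widehat{\T}^1$, where it is a direct check that the chosen transformation is the identity. Standard dévissage (triangulated subcategory generated by compact generators, closure under cones and sums) then upgrades invertibility on generators to invertibility of the natural transformation, completing the proof.
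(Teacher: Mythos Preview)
Your overall architecture matches the paper's: pass to $\DA$ via Theorem~\ref{DADM}, route the comparison through the auxiliary (semi-)perfectoid categories of Section~\ref{secsp}, build the natural transformation by mate calculus from the adjunctions, and test invertibility on compact generators. The differences are in organization and in the endgame.

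The paper splits the comparison cleanly into two independent pieces. First (Proposition~\ref{premain2}) it proves the triangle already commutes at the \emph{perfectoid} level: both $\xi$ and $\xi^\flat$ factor through $\PerfDA^{\eff}(K^\circ)$ and $\PerfDA^{\eff}(K^{\flat\circ})$, and the integral tilt $A\mapsto A^\flat=\varprojlim A/p$ furnishes a functor compatible with both the special-fibre equivalences and the generic-fibre functors, so the diagram commutes essentially for free there. Second (Theorem~\ref{premain}) it links $\DA^{\eff}_{\Frobet}(k)$ to $\PerfDA^{\eff}(k)$ via the equivalence $\LL\Perf^*\cong\RR j_*\LL\iota^*$ and shows that $\xi$ intertwines these up to an invertible transformation; the nontrivial transformation $\xi\circ\RR j_*\Rightarrow\RR j_*\circ\xi$ lives here, and it is applied separately over $K$ and over $K^\flat$. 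Your proposal merges these two steps into a single large pasting, which makes the bookkeeping heavier and obscures the simple geometric reason (integral tilting) behind the perfectoid-level commutativity.

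For the check on generators the paper does \emph{not} use induction on $N$ or Mayer--Vietoris. It lifts $\bar X$ \'etale over $\G_m^N$ to $\mfX$ \'etale over $\mfT^N_{K^\circ}$ (the ordinary formal torus, not the perfectoid one) and then invokes the precomputed identification $\mfG\,\Lambda(\mfX_\eta)\cong\Lambda(\mfX_\eta\times_{\T^N}\widehat{\T}^N)$ from \cite[Proposition~5.4 and Corollary~7.15]{vezz-fw}, together with $\RR(\cdot)_{\sigma*}\Lambda(\mfX_\sigma^{\Perf})\cong\Lambda(\mfX\times_{\mfT^N}\widehat{\mfT}^N)$, to match the two sides. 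Your proposed d\'evissage to $\widehat{\T}^1$ is not obviously adequate: an arbitrary \'etale cover of $\G_m^N$ is not built from copies of $\G_m$ by Mayer--Vietoris alone, so you would end up needing the same computation of $\mfG$ on motives of the form $\Lambda(\mfX_\eta)$ that \cite{vezz-fw} already supplies. It is both shorter and safer to quote that result directly, as the paper does.
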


\begin{rmk}\label{chi}
The theorem is equivalent to the commutation of the adjoint diagram, whose sides are $\chi$ and $\chi^\flat$. It is also equivalent to the version with the categories without transfers (see Theorem \ref{DADM}). We will then consider the analogous diagram with $ \DA^{\eff}_{\Frobet}(k,\Lambda) $, $\RigDA^{\eff}(K,\Lambda)$ and $\RigDA^{\eff}_{\Frobet}(K^\flat,\Lambda) $ in the proof.
\end{rmk}

We  point out that the functor $\xi^\flat\colon\DA^{\eff}(k,\Lambda)\ra\RigDA^{\eff}(K^\flat,\Lambda)$ takes a particularly explicit form. Recall that we have chosen a section $k\ra K^{\flat\circ}$. 

\begin{dfn} [{\cite[Section 2.5]{ayoub-rig}}]
Let $\bar{X}=\Spec \bar{A}$ be a smooth affine scheme over $k$. We let $Q^{\rig}(\bar{X})$ be the rigid variety $\mfX_\eta$ where $\mfX$ is the formal scheme associated to the completion of $\bar{A}\otimes_k K^{\flat\circ}$.
\end{dfn}

\begin{prop}\label{chiflat}
	The functor $\xi^\flat\colon \DA^{\eff}(k,\Lambda)\ra\RigDA^{\eff}(K^\flat,\Lambda)$ is canonically isomorphic to the functor 
	  $\LL(\Rig\circ(\cdot\times K^\flat))^*$ and to the (Quillen) functor induced by $Q^{\rig}(\cdot)$.
\end{prop}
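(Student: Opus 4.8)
The plan is to unwind the definition of $\xi^\flat = \LL(\cdot)^*_\eta \circ \RR(\cdot)_{\sigma*}$ on the level of generators and match it with the two asserted descriptions. First I would recall from the corollary following Theorem \ref{dd} that, having fixed the section $k \to K^{\flat\circ}$, the equivalence $\RR(\cdot)_{\sigma*} \colon \DA^{\eff}(k,\Lambda) \to \FormDA^{\eff}(K^{\flat\circ},\Lambda)$ is canonically identified with the functor induced by base change $\bar X \mapsto \bar{\mfX} \mathbin{\widehat{\times}}_{\Spec k} \Spf K^{\flat\circ}$, i.e. on a smooth affine $\bar X = \Spec \bar A$ it sends $\Lambda(\bar X)$ to $\Lambda(\Spf(\bar A \otimes_k K^{\flat\circ})^{\wedge})$, where the completion is $\pi^\flat$-adic. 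Composing with $\LL(\cdot)^*_\eta$, which sends $\Lambda(\mfX)$ to $\Lambda(\mfX_\eta)$, gives precisely $\Lambda(Q^{\rig}(\bar X))$ by the very definition of $Q^{\rig}$. Since the motives $\Lambda(\bar X)$ with $\bar X$ smooth affine generate $\DA^{\eff}(k,\Lambda)$ and both functors in play are left adjoints (hence commute with colimits), this identifies $\xi^\flat$ with the functor induced by $Q^{\rig}(\cdot)$.

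For the other description, I would note that $\Rig \circ (\cdot \times_k K^\flat)$ first base changes $\bar X$ to the algebraic variety $\bar X_{K^\flat} = \Spec(\bar A \otimes_k K^\flat)$ over the field $K^\flat$, then analytifies. The point is that $(\bar X_{K^\flat})^{\an}$ and $Q^{\rig}(\bar X)$ have the same associated motive: for $\bar X$ affine and smooth, $Q^{\rig}(\bar X)$ is the generic fibre of a formal model of the (affine, finite-type) scheme $\bar X_{K^\flat}$, and the analytification of an affine finite-type $K^\flat$-variety is the increasing union of the affinoid subdomains cut out by bounding the coordinates, one of which is exactly $\mfX_\eta$. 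More precisely, $X^{\an}$ is covered by affinoids $X^{\an}_{\leq n}$ obtained by rescaling, each $X^{\an}_{\leq n} \cong Q^{\rig}(\bar X)$ up to the automorphism rescaling coordinates, and the transition maps become isomorphisms in the homotopy category after $\B^1$-localisation (this is the standard argument, e.g. in \cite[Proposition 1.4.14]{ayoub-rig} and its proof, identifying the motive of $X^{\an}$ with that of any such affinoid exhaustion piece). Hence $\LL(\Rig \circ (\cdot \times K^\flat))^* \Lambda(\bar X) \cong \Lambda(Q^{\rig}(\bar X))$ naturally in $\bar X$, and again both sides are left adjoints, so the identification propagates to all of $\DA^{\eff}(k,\Lambda)$.

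The main obstacle I expect is making the last identification genuinely \emph{canonical and natural}, rather than just an abstract isomorphism on each generator: one must check that the chosen affinoid-exhaustion piece of $(\bar X_{K^\flat})^{\an}$ is, functorially in $\bar X$, the generic fibre of the specific formal model $\Spf(\bar A \otimes_k K^{\flat\circ})^{\wedge}$, and that the natural transformation interpolating the two functors is compatible with the localisations (étale and $\B^1$) used to form the motivic categories. This is essentially bookkeeping with Huber's generic-fibre formalism (Proposition \ref{spgenfib}) and the compatibility of analytification with the formal-model description, but it is the step where care is needed; everything else is a formal consequence of the adjunctions and the fact that the relevant motives generate.
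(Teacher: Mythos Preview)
Your approach is essentially the same as the paper's. For the identification $\xi^\flat\cong Q^{\rig}$, the paper argues exactly as you do: the functor $\bar X\mapsto (\bar X\times_k K^{\flat\circ})^\wedge$ is a right inverse of $(\cdot)_\sigma$, hence coincides with $\RR(\cdot)_{\sigma*}$ by the equivalence of Theorem~\ref{dd}, and composing with $\LL(\cdot)_\eta^*$ yields $Q^{\rig}$ by definition. For the identification $Q^{\rig}\cong\LL(\Rig\circ(\cdot\times K^\flat))^*$, the paper simply invokes \cite[Lemma 2.5.59]{ayoub-rig} rather than reproving it.

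One small correction to your sketch of that lemma: the claim that each exhaustion affinoid $X^{\an}_{\leq n}$ of $(\bar X_{K^\flat})^{\an}$ is isomorphic to $Q^{\rig}(\bar X)$ ``up to rescaling coordinates'' is not true in general --- rescaling the ambient coordinates of $\A^m$ does not preserve the defining equations of $\bar X$ unless these are homogeneous. What is true (and is what \cite[Lemma 2.5.59]{ayoub-rig} establishes) is precisely your second assertion: the transition inclusions $X^{\an}_{\leq n}\hookrightarrow X^{\an}_{\leq n+1}$ are $\B^1$-weak equivalences, so the colimit $(\bar X_{K^\flat})^{\an}$ has the same motive as the bottom piece $Q^{\rig}(\bar X)$. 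Your ``main obstacle'' is correctly identified, and citing the lemma disposes of it; the rest of your plan is fine.
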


\begin{proof}
	The two last functors are canonically equivalent, as shown in {\cite[Lemma 2.5.59]{ayoub-rig}}.
	
	From the canonical isomorphism $(\bar{X}\times_kK^{\flat\circ})_\sigma\cong\bar{X}$ we deduce that $\LL(Q^{\rig}(\cdot))$ is a right inverse for $\LL(\cdot)^*_\sigma$ and therefore coincides canonically with $\RR(\cdot)_{\sigma*}$ by Theorem \ref{dd}, and we obtain the  claim. 
\end{proof}

The commutativity stated in Theorem \ref{main} will be reached by combining the two following propositions.

\begin{thm}\label{premain}
	Let $K$ be a perfectoid field. The functors $\iota$ and $j$ induce a commutative  diagram of left adjoint functors
$$\xymatrix{
	\DA^{\eff}_{\Frobet}(k,\Lambda)\ar[d]_{\xi}\ar[r]^-{\LL\iota^*}&  \sPerfDA^{\eff}_{\widehat{\A}^1}(k,\Lambda)\ar[d]_{\xi}	&	\PerfDA^{\eff}(k,\Lambda)\ar[l]_-{\LL j^*}\ar[d]_{\xi}\\
	\RigDA^{\eff}_{\Frobet}(K,\Lambda)	\ar[r]^-{\LL\iota^*}	& \sPerfDA^{\eff}_{\widehat{\B}^1}(K,\Lambda)	&	\PerfDA^{\eff}(K,\Lambda)\ar[l]_-{\LL j^*}\\	
	}
$$
which has the following extra properties:
\begin{enumerate}
\item \label{1} The compositions $\RR j_*\LL\iota^*$ defined on the two lines are monoidal, triangulated equivalences of categories. On the first line, the functor $\RR j_*\LL\iota^*$ is canonically isomorphic to $\LL\Perf^* $. If $\car K=p$ the functor $\mfG=\RR j_*\LL\iota^*$ on the second line is canonically isomorphic to $\LL\Perf^* $ as well.
\item There is a  natural transformation   
$\xi\circ\RR j_*\Rightarrow\RR j_*\circ\xi$.%
\item The natural transformation $\xi\circ\LL\Perf^*\cong\xi\circ\RR j_*\circ\LL\iota^*\Rightarrow \RR j_*\circ\LL\iota^*\circ\xi$ is invertible.
\end{enumerate}
\end{thm}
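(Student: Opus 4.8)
We would prove the commutativity of the two squares first, then properties (1), (2) and (3) in turn; the only delicate point is (3). Every functor in the diagram is a monoidal left Quillen functor between the model categories presenting the motivic categories (Remark~\ref{modcatmot}), and each of them is induced by a functor on the underlying geometric sites: the horizontal arrows by the inclusions $\iota$ and $j$, the vertical arrows by $\xi=\LL(\cdot)^*_\eta\circ\RR(\cdot)_{\sigma*}$, where $\RR(\cdot)_{\sigma*}$ is the inverse of the Quillen equivalence $\LL(\cdot)^*_\sigma$ of Theorem~\ref{dd}. On the sites, $\iota$ and $j$ commute strictly with the special and generic fibre functors ($\iota(\mfX_\sigma)=(\iota\mfX)_\sigma$, $\iota(\mfX_\eta)=(\iota\mfX)_\eta$, and likewise for $j$), hence $\LL\iota^*\circ\LL(\cdot)^*_\sigma\cong\LL(\cdot)^*_\sigma\circ\LL\iota^*$ and $\LL\iota^*\circ\LL(\cdot)^*_\eta\cong\LL(\cdot)^*_\eta\circ\LL\iota^*$. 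Inverting the equivalences $\LL(\cdot)^*_\sigma$ turns the first relation into $\RR(\cdot)_{\sigma*}\circ\LL\iota^*\cong\LL\iota^*\circ\RR(\cdot)_{\sigma*}$, and composing yields $\LL\iota^*\circ\xi\cong\xi\circ\LL\iota^*$; the same reasoning with $j$ gives $\LL j^*\circ\xi\cong\xi\circ\LL j^*$. This establishes the commutative diagram. (Throughout one uses the $\Frobet$ topology wherever the base has positive residue characteristic, so that by Theorem~\ref{DADM} the categories over $k$ are the usual ones.)

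For (1): on the first line $\RR j_*\circ\LL\iota^*$ is the algebraic analogue of Theorem~\ref{mottilteq}, established in \cite{vezz-fw}, and in particular a monoidal triangulated equivalence; by the universal property of the perfection (every morphism from a perfect scheme to $\bar X$ factors uniquely through $\bar X^{\Perf}$) it sends the representable $\Lambda(\bar X)$ to $\Lambda(\bar X^{\Perf})=\LL\Perf^*\Lambda(\bar X)$, so — both functors being cocontinuous and agreeing naturally on a compact generating family — it is canonically isomorphic to $\LL\Perf^*$. On the second line $\RR j_*\circ\LL\iota^*$ is exactly $\mfG$ of Theorem~\ref{mottilteq}, an equivalence; and if $\car K=p$ then $K=K^\flat$, and the same argument applied to the completed perfection of rigid analytic varieties (Theorem~\ref{LPerf}) identifies $\mfG$ with $\LL\Perf^*$.

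For (2) we would take the mate of the commutation $\xi\circ\LL j^*\cong\LL j^*\circ\xi$ of the right square: using the unit of the adjunction $(\LL j^*,\RR j_*)$ over $K$ and the counit over $k$ one forms
\[
\xi\circ\RR j_*\ \longrightarrow\ \RR j_*\circ\LL j^*\circ\xi\circ\RR j_*\ \cong\ \RR j_*\circ\xi\circ\LL j^*\circ\RR j_*\ \longrightarrow\ \RR j_*\circ\xi .
\]
Whiskering this with $\LL\iota^*$ and using the left square together with (1) turns it into the transformation of (3), a natural map $\xi\circ\LL\Perf^*\Rightarrow\RR j_*\circ\LL\iota^*\circ\xi$. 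Both of these functors are cocontinuous (the first is a composite of left adjoints; the second is the equivalence $\RR j_*\circ\LL\iota^*$ of the second line precomposed with $\xi$), so it suffices to check that this natural transformation is invertible on a set of compact generators. We would use the motives $\Lambda(\bar X)$ with $\bar X$ smooth over $k$ and \'etale over a torus $\G_m^N$ — every smooth $k$-scheme is Zariski-locally of this form.

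For such an $\bar X$, fix a formal model $\mfX$ over $K^\circ$ that is \'etale over $\mfT^N$ and satisfies $\mfX_\sigma\cong\bar X$. Then $\xi\,\Lambda(\bar X)=\Lambda(\mfX_\eta)$, and the explicit description of the tilting equivalence in \cite{vezz-fw} identifies $(\RR j_*\circ\LL\iota^*)(\Lambda(\mfX_\eta))$ with $\Lambda(\mfX_\eta\times_{\T^N}\widehat{\T}^N)$. On the other hand $\bar X^{\Perf}$ is \'etale over $\widehat{\G}_m^N$, its unique perfectoid formal lift over $K^\circ$ is $\mfX^{\Perf}=\mfX\times_{\mfT^N}\widehat{\mfT}^N$, and therefore $\xi\,\Lambda(\bar X^{\Perf})=\Lambda\bigl((\mfX^{\Perf})_\eta\bigr)$. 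Since the generic fibre functor commutes with fibre products and $(\widehat{\mfT}^N)_\eta=\widehat{\T}^N$, one has $(\mfX^{\Perf})_\eta=\bigl(\mfX\times_{\mfT^N}\widehat{\mfT}^N\bigr)_\eta\cong\mfX_\eta\times_{\T^N}\widehat{\T}^N$, and unwinding the mates shows that this canonical isomorphism is precisely the component of the transformation of (3) on $\Lambda(\bar X)$. Hence (3) holds. The delicate part of the argument is exactly this last one: one must legitimately descend $\xi$ to the $\Frobet$-localized categories in mixed characteristic (via Theorem~\ref{DADM}), reduce with care to the torus generators, and — compatibly with Scholze's equivalence — check that the two \emph{a priori} different ways of producing a perfectoid space over $K$ out of $\bar X$ (perfect, then lift, then take the generic fibre; versus lift, then take the generic fibre, then pass to the associated perfectoid space) yield the same motive via an isomorphism realizing the exchange transformation.
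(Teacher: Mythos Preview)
Your proposal is correct and follows essentially the same route as the paper. The paper's proof differs only in presentation: it makes the commutativity step explicit by writing out a $3\times4$ diagram (inserting the intermediate $K^\circ$-row and separating the $\B^1$- from the $\widehat{\B}^1$-localization), it phrases the mate in (2) as $\LL(\cdot)_\eta^*\RR j_*\Rightarrow\RR j_*\LL(\cdot)_\eta^*$ rather than for $\xi$ directly, and in (3) it treats the case $\car K=p$ separately (where it reduces to $\Perf$ commuting with base change) before doing the generator computation in mixed characteristic; but the core verification on torus-\'etale generators, identifying both sides with $\Lambda(\mfX_\eta\times_{\T^N}\widehat{\T}^N)$ via the results of \cite{vezz-fw}, is the same as yours.
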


\begin{proof}
We can reproduce the proofs of \cite[Proposition 6.6 and Theorem 6.9]{vezz-fw} for the case of a trivial valuation (in which case the proofs are simplified) in order to obtain the following facts:
	\begin{enumerate}[(i)]
		\item The functor $\LL\iota^*$ restricts to a functor $ \DA^{\eff}_{\Frobet}(k,\Lambda)\ra\sPerfDA^{\eff}_{\widehat{\A}^1}(k,\Lambda)$.
		\item The composition $\RR j_*\LL\iota^*$ is canonically equivalent to $\LL\Perf^*$ and restricts to a monoidal, triangulated equivalence of categories $ \DA^{\eff}_{\Frobet}(k,\Lambda)\cong\PerfDA^{\eff}(k,\Lambda)$.
	\end{enumerate}
The facts above prove the first claim. The commutativity of the diagram in the statement follows easily from the following commutative diagram, where we omit $\Lambda$ for brevity.
$$\xymatrix{
		 	\DA^{\eff}_{\et}(k)\ar[r]^{\LL \iota^*}	&\sPerfDA^{\eff}_{\A^1}(k)\ar[r]&\sPerfDA^{\eff}_{\widehat{\A}^1}(k)&	\PerfDA^{\eff}(k)\ar[l]\ar[l]_{\LL j^*}\\
	\FormDA^{\eff}_{\et}(K^\circ)\ar[r]^{\LL \iota^*}\ar[d]\ar[u]^{\sim}	&  \sPerfDA^{\eff}_{\mfB^1}(K^\circ)\ar[d]\ar[u]\ar[r]	&\sPerfDA^{\eff}_{\widehat{\mfB}^1}(K^\circ)\ar[d]\ar[u]^{\sim}	&	\PerfDA^{\eff}(K^\circ)\ar[l]\ar[l]_{\LL j^*}\ar[d]\ar[u]^{\sim}\\
	\RigDA^{\eff}_{\et}(K)\ar[r]^{\LL \iota^*}	& \sPerfDA^{\eff}_{\B^1}(K)		\ar[r]&\sPerfDA_{\widehat{\B}^1}^{\eff}(K)&	\PerfDA^{\eff}(K)\ar[l]_{\LL j^*}\\	
}
$$

From the diagram above, we also deduce that for the second claim, it is sufficient to provide a natural transformation  $\LL(\cdot)_\eta^*\RR j_*\Rightarrow\RR j_*\LL(\cdot)_\eta^*$. %
This transformation can be obtained by adjunction from the transformation:
\[
\LL j^*\circ\LL(\cdot)_\eta^*\circ\RR j_*\cong \LL(\cdot)_\eta^*\circ \LL j^*\circ\RR j_*\Rightarrow\LL(\cdot)_\eta^*.
\]

We now prove the last point. In case $\car K=p$    it boils down to the commutation between $\Perf$ and the base change  thanks to assertion in  \eqref{1}.

We then assume $\car K=0$. In order to prove that a natural transformation is invertible, it suffices to test it on a specific set of generators of the category $\DA^{\eff}_{\Frobet}(k,\Lambda)$ such as motives $M=\Lambda(\mfX_\sigma)$ associated to special fibers of formal schemes $\mfX$ endowed with an \'etale map over $\mfT^N$. We now fix such an $\mfX$.

By means of Theorem \ref{dd} we have that $\RR(\cdot)_{\sigma*}M\cong\Lambda(\mfX)$. Similarly, since $(\mfX\times_{\mfT^N}\widehat{\mfT}^N)_\sigma\cong\mfX_\sigma^{\Perf}$  we deduce that $\RR(\cdot)_{\sigma*}\Lambda(\mfX_\sigma^{\Perf})\cong\Lambda(\mfX\times_{\mfT^N}\widehat{\mfT}^N))$. We then conclude the following sequence of isomorphisms:
$$\xi^*\LL\Perf^*M\cong\LL(\cdot)_\eta^*\RR(\cdot)_{\sigma*}\Lambda(\mfX_\sigma^{\Perf})\cong\LL(\cdot)_\eta^*\Lambda(\mfX\times_{\mfT^N}\widehat{\mfT}^N))\cong\Lambda(\mfX_\eta\times_{\T^N}\widehat{\T}^N).$$ This object is isomorphic, by means of the transformation above, to  
$$\RR j_*\LL\iota^*\xi M\cong\RR j_*\LL\iota^*\LL(\cdot)_{\eta}^*\R(\cdot)_{\sigma*}M\cong\mfG\Lambda(\mfX_\eta)$$
 as shown in \cite[Proposition 5.4 and Corollary 7.15]{vezz-fw}.
\end{proof}

\begin{prop}\label{premain2}
	The following diagram is commutative, up to a canonical transformation.
	\[
	\xymatrix{
		&\PerfDA^{\eff}(k,\Lambda)\ar[dl]_{\xi}\ar[dr]^{\xi^\flat}\\
		\PerfDA^{\eff}(K,\Lambda)%
		\ar@{<->}[rr]^{\sim}&&\PerfDA^{\eff}(K^\flat,\Lambda)%
	}
	\]
\end{prop}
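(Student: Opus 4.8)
The plan is to descend the statement to the underlying categories of smooth (formal) perfectoid schemes, where the compatibility of Scholze's tilting with the generic and special fibre functors is essentially formal, and then to pass to motives by the functoriality of the construction recalled in Remark~\ref{modcatmot}. Throughout I use that on perfectoid spaces the relative Frobenius is invertible, so that the $\et$ and $\Frobet$ topologies coincide and the perfectoid motivic categories appearing here carry no extra localisation.

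Recall from Remark~\ref{xichi} that on perfectoid motives $\xi=\LL(\cdot)^*_\eta\circ\RR(\cdot)_{\sigma*}$, where $\LL(\cdot)^*_\sigma\colon\PerfDA^{\eff}(K^\circ,\Lambda)\xrightarrow{\ \sim\ }\PerfDA^{\eff}(k,\Lambda)$ is the special fibre equivalence of Theorem~\ref{dd}(1) with inverse $\RR(\cdot)_{\sigma*}$, and $\LL(\cdot)^*_\eta\colon\PerfDA^{\eff}(K^\circ,\Lambda)\to\PerfDA^{\eff}(K,\Lambda)$ is the generic fibre functor of Theorem~\ref{dd}(2); the functor $\xi^\flat$ is the analogous composite over $K^{\flat\circ}$ and $K^\flat$. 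Hence it suffices to produce a diagram
\[
\xymatrix{
\PerfDA^{\eff}(K)\ar@{<->}[d]^{\sim} & \PerfDA^{\eff}(K^\circ)\ar[l]_-{\LL(\cdot)^*_\eta}\ar[r]^-{\LL(\cdot)^*_\sigma}\ar@{<->}[d]^{\sim} & \PerfDA^{\eff}(k)\ar@{=}[d] \\
\PerfDA^{\eff}(K^\flat) & \PerfDA^{\eff}(K^{\flat\circ})\ar[l]^-{\LL(\cdot)^*_\eta}\ar[r]_-{\LL(\cdot)^*_\sigma} & \PerfDA^{\eff}(k)
}
\]
whose two squares commute up to a canonical isomorphism and whose two inner vertical arrows are the (motivic) tilting equivalences: inverting the horizontal equivalences on the right and composing the two halves then yields the sought canonical isomorphism $\xi^\flat\cong(\text{bottom tilting equivalence})\circ\xi$, no separate invertibility check being needed since everything is assembled out of equivalences and commuting squares.

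The two squares come from two compatibilities which I would first establish at the level of the geometric categories. Tilting induces an equivalence of \'etale sites $\PerfSm/K^\circ\cong\PerfSm/K^{\flat\circ}$: a $\pi$-adically complete perfectoid $K^\circ$-algebra \'etale over $\widehat{\mfT}^N$ tilts to one \'etale over the corresponding perfectoid torus over $K^{\flat\circ}$, \'etaleness being preserved and reflected by almost purity. Under this equivalence, the generic fibre functor $R\mapsto\Spa(R[1/\pi],R)$ matches its counterpart over $K^{\flat\circ}$ because $(R[1/\pi])^\flat\cong R^\flat[1/\pi^\flat]$ for a compatible pseudo-uniformizer $\pi^\flat$ of $K^\flat$, and Scholze's tilting of perfectoid Tate algebras over $K$ is compatible with this; and the reduction functor $R\mapsto\Spec(R\otimes_{K^\circ}k)$ to $\PerfSm/k$ matches $R^\flat\mapsto\Spec(R^\flat\otimes_{K^{\flat\circ}}k)$ because the isomorphism $R^\flat/\pi^\flat R^\flat\cong R/\pi R$ passes to the colimit over $p$-power roots of $\pi$ to produce a natural isomorphism $R^\flat\otimes_{K^{\flat\circ}}k\cong R\otimes_{K^\circ}k$. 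Feeding these two diagrams of sites into the functoriality of the motivic construction, and using that the motivic tilting equivalences $\PerfDA^{\eff}(K^\circ,\Lambda)\cong\PerfDA^{\eff}(K^{\flat\circ},\Lambda)$ and $\PerfDA^{\eff}(K,\Lambda)\cong\PerfDA^{\eff}(K^\flat,\Lambda)$ are by construction the ones induced by the respective site equivalences, one obtains the two squares above; the equality in the right column records that $K$ and $K^\flat$ have the same residue field $k$ \cite[Lemma~3.4]{scholze}.

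The one genuinely technical point is the verification of this reduction compatibility in a form functorial enough to survive \'etale sheafification and the passage to the Verdier quotient defining $\PerfDA^{\eff}$ — concretely, that ``tilt, then reduce modulo the maximal ideal'' and ``reduce modulo the maximal ideal'' agree, with all their morphisms, as functors $\PerfSm/K^\circ\to\PerfSm/k$ — together with the bookkeeping needed to see that the motivic tilting equivalence used on the bottom of the diagram in the statement is precisely the one induced by the site-level tilting, so that the purely formal chase of the previous paragraph applies verbatim. Once this is in place, the remainder is the diagram chase, and the statement follows.
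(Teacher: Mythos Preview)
Your proposal is correct and follows essentially the same route as the paper: introduce an integral tilting functor $\PerfDA^{\eff}(K^\circ,\Lambda)\to\PerfDA^{\eff}(K^{\flat\circ},\Lambda)$ induced by $A\mapsto A^\flat$, check that it is compatible with both the generic fibre functors (your left square, the paper's rectangle) and the special fibre functors (your right square, the paper's upper triangle), and then compose. The only cosmetic difference is that the paper \emph{deduces} that the integral tilting is an equivalence from the fact that the two special fibre functors to $\PerfDA^{\eff}(k,\Lambda)$ are equivalences, whereas you assert the site-level equivalence $\PerfSm/K^\circ\cong\PerfSm/K^{\flat\circ}$ directly; both are fine.
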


\begin{proof}
	We consider the following diagram
	\[
	\xymatrix{
		&\PerfDA^{\eff}(k,\Lambda)\\
		\PerfDA^{\eff}(K^\circ,\Lambda)\ar[d]_{\LL(\cdot)_\eta^{*}}\ar[ur]_{\sim}^{\LL(\cdot)_\sigma^{*}}\ar[rr]^{\LL(\cdot)^\flat}&&
		\PerfDA^{\eff}(K^{\flat\circ},\Lambda)\ar[ul]^{\sim}_{\LL(\cdot)_\sigma^{*}}\ar[d]_{\LL(\cdot)_\eta^{*}}
		\\
		\PerfDA^{\eff}(K,\Lambda)\ar@{<->}[rr]^{\sim}&&\PerfDA^{\eff}(K^\flat,\Lambda)
	}
	\]
	where the functor $\LL(\cdot)^\flat$ is induced by $A\mapsto A^\flat=\varprojlim A/p$. It sends smooth perfectoid formal  spaces over $K^\circ $ to the same objects over $K^{\flat\circ}$ and preserves the \'etale topology. 
	
	By definition, this functor makes the triangle as well as the rectangle of the diagram commute. On the other hand, since the two sides of the triangle are equivalences, we deduce that it also is.
	
	We conclude that there is an invertible natural transformation from 
	$\xi$ to $\xi^\flat$ 
	as wanted. %
\end{proof}

\begin{proof}[Proof of Theorem \ref{main}]
We combine the diagrams of Theorem \ref{premain} for $K$ and $K^\flat$ with the diagram of Proposition \ref{premain2} obtaining the following (we can assume $\car K=0$ and we omit $\Lambda$):
$$
\xymatrix{
	\DA^{\eff}_{\Frobet}(k)\ar[d] \ar[r]^{\LL\Perf^*}_{\sim}	 \ar@/^35pt/[rrr]^{\id}&	\PerfDA^{\eff}(k)\ar[d]\ar@{=}[r]&\PerfDA^{\eff}(k)\ar[d]&\DA^{\eff}_{\Frobet}(k)\ar[d]	\ar[l]_{\LL\Perf^*}^{\sim}\\
	\RigDA^{\eff}(K)	\ar[r]^{\sim} \ar@/_35pt/[rrr]^{\mfG_{K,K^\flat}}&	\PerfDA^{\eff}(K)&\PerfDA^{\eff}(K^\flat)\ar@{<->}[l]_{\sim}&\RigDA^{\eff}_{\Frobet}(K^\flat)\ar[l]_{\sim}	\\	
}
$$
\end{proof}

As stated in the introduction, this commutativity gives an explicit formula for tilting motives of varieties of good reduction.

\begin{cor}Let $M$ be in $\FormDA^{\eff}(K^\circ,\Lambda)$. 
	The tilt of the motive  $\LL(\cdot)_\eta^*M$ is the motive $\LL(\cdot\widehat{\times}_{\Spec k}\Spa K^\flat)\LL(\cdot)_{\sigma}^*M$. In particular, the tilt of the motive $\Lambda(\mfX_\eta)$ of a rigid analytic variety of good reduction  is the motive associated to $\mfX_\sigma\times_k \Spa K^\flat$ that is, the analytification  of the special fiber  base-changed to $K^\flat$. It is also isomorphic to the motive of $Q_{\rig}(\mfX_\sigma)$. %
\end{cor}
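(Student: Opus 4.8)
The plan is to obtain this formally from Theorem~\ref{main}, exploiting that $\RR(\cdot)_{\sigma*}$ is simultaneously a quasi-inverse to $\LL(\cdot)_\sigma^*$ and, over a base of characteristic $p$, a base-change functor. Working without transfers as licensed by Remark~\ref{chi}, put $N\colonequals\LL(\cdot)_\sigma^*M$. By Theorem~\ref{dd} the functor $\LL(\cdot)_\sigma^*\colon\FormDA^{\eff}(K^\circ,\Lambda)\to\DA^{\eff}(k,\Lambda)$ is an equivalence with quasi-inverse $\RR(\cdot)_{\sigma*}$, so $M\cong\RR(\cdot)_{\sigma*}N$, and hence $\LL(\cdot)_\eta^*M\cong\LL(\cdot)_\eta^*\RR(\cdot)_{\sigma*}N=\xi(N)$ by the description of $\xi$ in Remark~\ref{xichi}.

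Next I would apply the motivic tilting equivalence. By Theorem~\ref{main} there is a canonical isomorphism $\mfG_{K,K^\flat}\circ\xi\cong\xi^\flat$, whence $\mfG_{K,K^\flat}\bigl(\LL(\cdot)_\eta^*M\bigr)\cong\xi^\flat(N)$, and it remains to rewrite $\xi^\flat(N)$ geometrically. By Remark~\ref{xichi}, $\xi^\flat=\LL(\cdot)_\eta^*\circ\RR(\cdot)_{\sigma*}$ computed over $K^{\flat\circ}$; since $K^\flat$ has characteristic $p$, the corollary following Theorem~\ref{dd} (using the fixed section $k\to K^{\flat\circ}$) identifies $\RR(\cdot)_{\sigma*}$ over $K^{\flat\circ}$ with the base-change functor $\LL(\cdot\widehat{\times}_{\Spec k}\Spf K^{\flat\circ})$. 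As the generic fibre of $\bar{X}\widehat{\times}_{\Spec k}\Spf K^{\flat\circ}$ is $\bar{X}\widehat{\times}_{\Spec k}\Spa K^\flat$, composing the two functors yields $\xi^\flat(N)\cong\LL(\cdot\widehat{\times}_{\Spec k}\Spa K^\flat)(N)=\LL(\cdot\widehat{\times}_{\Spec k}\Spa K^\flat)\LL(\cdot)_\sigma^*M$, which is the asserted formula. Equivalently, one may quote Proposition~\ref{chiflat} to identify $\xi^\flat$ with the Quillen functor induced by $Q^{\rig}$ and argue with that model.

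For the ``in particular'' assertions I would take $M=\Lambda(\mfX)$ with $\mfX$ a smooth formal model over $K^\circ$ of a rigid variety of good reduction, so that $\LL(\cdot)_\eta^*M=\Lambda(\mfX_\eta)$ and $\LL(\cdot)_\sigma^*M=\Lambda(\mfX_\sigma)$. The formula then says that the tilt of $\Lambda(\mfX_\eta)$ is $\Lambda\bigl(\mfX_\sigma\widehat{\times}_{\Spec k}\Spa K^\flat\bigr)$, i.e.\ the motive of the analytification of the special fibre base-changed to $K^\flat$, and this rigid variety is canonically $Q^{\rig}(\mfX_\sigma)$ by \cite[Lemma 2.5.59]{ayoub-rig}, exactly as already used in Proposition~\ref{chiflat}.

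I do not expect a genuine obstacle here: everything is a formal consequence of Theorem~\ref{main}. The only point requiring care is the bookkeeping — distinguishing the generic- and special-fibre functors over $K^\circ$ from those over $K^{\flat\circ}$, keeping in mind that $\LL(\cdot)_\sigma^*$ is an equivalence whereas $\LL(\cdot)_\eta^*$ is merely a left adjoint, and checking the elementary identification $(\bar{X}\widehat{\times}_{\Spec k}\Spf K^{\flat\circ})_\eta\cong\bar{X}\widehat{\times}_{\Spec k}\Spa K^\flat$ for the objects at hand.
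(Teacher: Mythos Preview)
Your proposal is correct and follows essentially the same approach as the paper: both arguments use Theorem~\ref{dd} to rewrite $\LL(\cdot)_\eta^*M$ as $\xi\LL(\cdot)_\sigma^*M$, apply Theorem~\ref{main} to identify its tilt with $\xi^\flat\LL(\cdot)_\sigma^*M$, and then invoke Proposition~\ref{chiflat} (or the corollary after Theorem~\ref{dd}) for the explicit description. Your write-up is simply more detailed than the paper's three-line version.
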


\begin{proof}
By Theorem \ref{dd} the motive $\LL(\cdot)_\eta^*M$ is canonically isomorphic to $\xi \LL(\cdot)_\sigma^*M$ and hence its tilt is isomorphic to $\xi^\flat \LL(\cdot)_\sigma^*M$ by Theorem \ref{main}. The statement follows from the explicit description of $\xi^\flat$ given in Proposition \ref{chiflat}.
\end{proof}

We conclude this section by mentioning the stable version of Theorem \ref{main}. By formally inverting the Tate shift $\Lambda(1)$ it is possible to define the categories $\DM(k,\Lambda)$ and $\RigDM(K,\Lambda)$. We refer to \cite[Definition 2.5.27]{ayoub-rig} for their formal definition. Under the assumption $\Q\subset\Lambda$ these categories  fully faithfully contain the effective categories of motives $\DM^{\eff}(k,\Lambda)$ and $\RigDM^{\eff}(K,\Lambda)$ (see \cite[2.5.49]{ayoub-rig}) and are generated (as triangulated categories with small sums) by the Tate twists of effective motives.

\begin{rmk}\label{DADMstable}
Also in the stable setting, adding transfers or not makes no difference  as long as we consider the $\Frobet$-topology  and the ring of coefficients $\Lambda$ contains $\Q$  \cite{vezz-DADM}. In the stable algebraic setting, the equivalence $\DM(k,\Lambda)\cong \DA(k,\Lambda)$ holds already at the level of \'etale motives, as long as the characteristic of $k$ is invertible in $\Lambda$ (see \cite[Appendix B]{ayoub-etale}). We can therefore write alternatively $\DA(k,\Lambda)$ or $\DM(k,\Lambda)$, $\RigDA_{\Frobet}(K,\Lambda)$ or $\RigDM(K,\Lambda)$.
\end{rmk}

\begin{cor}
The following diagram is commutative, up to  a natural transformation, and extends the triangle of Theorem \ref{main}.
\[
\xymatrix{
	&\DM(k,\Lambda)\ar[dl]_{\xi}\ar[dr]^{\xi^\flat}\\
	\RigDM(K,\Lambda) %
	\ar@{<->}[rr]^{\sim}_{\mfG'}&&\RigDM(K^\flat,\Lambda) %
}
\]
\end{cor}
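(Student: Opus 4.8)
The plan is to deduce the stable statement from the effective statement (Theorem \ref{main}) by the standard ``stabilization is a colimit along Tate twists'' argument. Recall that $\DM(k,\Lambda)$ is obtained from $\DM^{\eff}(k,\Lambda)$ by formally inverting $\Lambda(1)$, and similarly for $\RigDM(K,\Lambda)$ and $\RigDM(K^\flat,\Lambda)$; concretely, each stable category is a (homotopy) colimit of the tower $\DM^{\eff}\xrightarrow{-\otimes\Lambda(1)}\DM^{\eff}\xrightarrow{-\otimes\Lambda(1)}\cdots$, so it suffices to produce the whole diagram at the effective level in a way compatible with Tate twisting, and to check that the three functors $\xi$, $\xi^\flat$, $\mfG_{K,K^\flat}$ respect the Tate object up to canonical isomorphism. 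For this one uses that $\xi=\LL(\cdot)_\eta^*\RR(\cdot)_{\sigma*}$ and $\xi^\flat$ are monoidal (Remark \ref{xichi}, Theorem \ref{dd}) and that $\mfG$ is monoidal (Theorem \ref{mottilteq}); a monoidal functor sends the $\otimes$-invertible Tate object to a $\otimes$-invertible object, and since on all these categories $\Lambda(1)$ is characterized as a direct summand of $\widetilde{\Lambda}(\mathbb{P}^1)$ (resp. its rigid-analytic/residual analogue), which is preserved on the nose, one gets canonical identifications $\xi(\Lambda(1))\cong\Lambda(1)$ etc.

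First I would record that the effective triangle of Theorem \ref{main} consists of monoidal functors and that the natural isomorphism witnessing its commutativity is monoidal, hence compatible with $-\otimes\Lambda(1)$. Then I would invoke the universal property of the stabilization: for a monoidal model (or $\infty$-)category $\mathcal{E}$ with a chosen invertible-up-to-homotopy object, $\mathcal{E}[\Lambda(1)^{-1}]$ is the colimit of the Tate tower, and a monoidal functor $\mathcal{E}\to\mathcal{E}'$ carrying the Tate object to the Tate object extends uniquely (up to coherent homotopy) to the stabilizations. Applying this to each of the three edges of the triangle and to the two edges of the commutativity $2$-cell, one obtains $\xi$, $\xi^\flat$ and $\mfG'$ on the stable categories together with a natural transformation filling the stable triangle, whose restriction to the effective subcategories (which sit fully faithfully inside, by \cite[2.5.49]{ayoub-rig}) recovers the transformation of Theorem \ref{main}. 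That the stable transformation is again a natural transformation (and the triangle commutes up to it) is then automatic since the stable categories are generated by Tate twists of effective motives and the transformation is compatible with Tate twisting by construction.

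Concretely I would spell this out as follows: let $\mathbb{T}$ denote the Tate twist functor on each of the three effective categories; the effective diagram of Theorem \ref{main} commutes with $\mathbb{T}$ on each vertex because all functors involved are monoidal and $\mathbb{T}=-\otimes\Lambda(1)$. Passing to the (filtered) homotopy colimit over the tower $(\mathrm{id},\mathbb{T},\mathbb{T}^2,\dots)$ on each vertex produces $\DM(k,\Lambda)$, $\RigDM(K,\Lambda)$, $\RigDM(K^\flat,\Lambda)$ respectively, the induced functors are $\xi$, $\xi^\flat$ and an equivalence $\mfG'$ extending $\mfG_{K,K^\flat}$, and the invertible natural transformation of Theorem \ref{main}, being $\mathbb{T}$-compatible, induces one between the colimit functors; on Tate twists of effective motives it agrees with the effective one, and these generate, so the diagram commutes up to this transformation as claimed.

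I expect the only genuine subtlety — the ``main obstacle'' — to be bookkeeping the monoidality and $\mathbb{T}$-compatibility of the natural transformation from Theorem \ref{main} rather than merely of the functors: one must check that the $2$-cell in the effective triangle is itself monoidal (equivalently, commutes with $-\otimes\Lambda(1)$), which is where a careful reading of the construction in Theorem \ref{premain} and Proposition \ref{premain2} is needed, since the transformation there is built by a chain of unit/counit maps of monoidal adjunctions. Everything else is the formal fact that inverting a monoidal-invertible object is functorial in monoidal functors preserving that object; alternatively one can cite \cite[Definition 2.5.27]{ayoub-rig} and the surrounding discussion, where this stabilization and its functoriality are set up, and simply apply it edge by edge.
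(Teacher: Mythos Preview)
Your approach is correct and leads to the same conclusion, but it is packaged differently from the paper's argument, and the difference matters for exactly the point you flag as the ``main obstacle''.

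The paper does \emph{not} extend the effective $2$-cell of Theorem \ref{main} along the Tate tower. Instead it observes that the building blocks --- the diagrams and natural transformations of Theorem \ref{premain} and Proposition \ref{premain2} --- are constructed from left Quillen functors which preserve the Tate shift, hence admit direct stable analogues. One then reruns the proof of Theorem \ref{main} in the stable categories: the resulting stable natural transformation is constructed from scratch at the stable level, and to check it is invertible one tests on Tate twists of the effective generators, which (since all functors preserve twists) reduces to the effective check already carried out. Thus the paper never needs to verify that the effective $2$-cell itself is monoidal or $\mathbb{T}$-compatible; it only needs that each constituent functor preserves $\Lambda(1)$.

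Your route --- pass the whole effective triangle with its $2$-cell to the colimit along $\mathbb{T}$ --- is perfectly valid, but it forces you to check what you yourself identify: that the unit/counit composites producing the effective $2$-cell commute with $-\otimes\Lambda(1)$. This is true (they are built from monoidal adjunctions), but it is an extra verification that the paper's strategy sidesteps. In short: the paper trades a monoidality check on a $2$-cell for the observation that the auxiliary Quillen functors already lift to spectra, and then reuses the effective invertibility computation on generators; your approach is more categorical but incurs the bookkeeping you anticipated.
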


\begin{proof}
The diagrams and the natural transformations of Theorems \ref{premain} and \ref{premain2} can be constructed also for stable motives, since the left Quillen functors which appear there preserve the Tate shift. We can then adopt the same strategy as in the proof of Theorem \ref{main}: in order to prove that a natural transformation of functors between stable motives is invertible, it is sufficient to test it on Tate twists of effective ones. As the functors involved preserve the Tate twist, the result follows from the check made in Theorem \ref{main}.
\end{proof}

\section{de Rham cohomology over local fields of positive characteristic}
\label{secdR}

Suppose that $K$ is a complete non-archimedean field of characteristic $0$. In order to define a cohomology theory ``\`a la de Rham'' for rigid analytic varieties over $K$ one has to endow them with the structure $X^\dagger$ of a so-called \emph{dagger} \emph{space}: this amounts (at least locally) to a choice of an embedding of $X$ inside the interior of a larger variety $X'$. One then needs to consider only those differential forms which extend in some strict neighbourhood of $X$ in $X'$ and consider the associated cohomology groups. We refer to \cite{gk-dR} for the definitions of dagger spaces and the resulting cohomology theory, called ``overconvergent de Rham cohomology''. 

In \cite{vezz-rigidreal} we proved that the overconvergent de Rham cohomology of rigid analytic varieties over $K$ is represented by a motive of $\RigDA^{\eff}(K,\Lambda)$. We briefly recall here its construction. We can consider the complex of overconvergent differential forms $\Omega^\dagger$ which is a complex of presheaves on smooth dagger spaces. It induces an object of the category of dagger motives $\RigDA^{\dagger\eff}(K,\Lambda)$ when $K\subset\Lambda$ and hence a motive of $\RigDM^{\eff}(K,\Lambda)$ by means of the canonical equivalence
\[
\LL l^*\colon \RigDA^{\dagger\eff}(K,\Lambda)\cong\RigDA^{\eff}(K,\Lambda)\cong\RigDM^{\eff}(K,\Lambda).
\]
By abuse of notation, we will still denote the motive $\LL l^*\Omega^\dagger$ by $\Omega^\dagger$. 

\begin{rmk}\label{Frobac}
We show in \cite{vezz-rigidreal} that the induced cohomology theory defined on $\DM(k,\Lambda)$ via the functor $\xi\colon\DM(k,\Lambda)\ra\RigDM(K,\Lambda)$ and the complex $\Omega^\dagger$ is the usual rigid cohomology. We remark that by functoriality of this construction and the fact that the relative Frobenius is invertible in $\DM(k,\Lambda)$ (see \cite{vezz-DADM} for example) this cohomology theory gains automatically a canonical action of Frobenius in case $k$ is a finite field. The same fact holds for formal motives, and formal perfectoid motives, beacuse of the equivalences $\FormDA(K^\circ,\Lambda)\cong\DA(k,\Lambda)\cong\PerfDA(K^\circ,\Lambda)$. 
\end{rmk}

We now suppose that $K$ is perfectoid. If we let $K^\flat$ be its tilt, we will say that $K$ is an \emph{untilt} of $K^\flat$. Tilting is a canonical operation, but untilting is not: the various untilts of $K^\flat$ are  parametrized by  principal ideals of $W(\mcO_{K^\flat})$ generated by a primitive element of degree one (see \cite{fontaine-bourbaki}.

By using the rigid analytic tilting equivalence, the motive $\Omega^\dagger$ represents a cohomology theory also on rigid analytic varieties over $K^\flat$ as well as on smooth perfectoid spaces over $K$ (or $K^\flat$). We now list the formal properties of such cohomology theories, and their relationship with rigid cohomology of the special fibers, in  light of the results of Section \ref{secmain}. %

\begin{dfn}
	Let $K'$ be a perfectoid field of any characteristic, and let $K$ be an untilt of $K'^\flat$ of mixed characteristic. Put $\Lambda=K$. Let $$\mfG_{K',K}\colon\RigDM^{\eff}(K',\Lambda)\stackrel{\sim}{\ra}\RigDM^{\eff}(K'^\flat,\Lambda)\stackrel{\sim}{\leftarrow}\RigDM^{\eff}(K,\Lambda)$$ be the canonical motivic tilting equivalence from rigid motives over $K'$ to the ones over $K$. 
	 The  \emph{$K$-de Rham (overconvergent) cohomology} $H^i_{\dR}(M,K)$ of a rigid analytic motive $M$ over $K'$ is the overconvergent de Rham cohomology of the motive $\mfG_{K',K}M$. 
	 
	 Let now $M$ be an algebraic motive in $\DM^{\eff}(K',\Lambda)$. The  \emph{$K$-de Rham (overconvergent) cohomology} $H^i_{\dR}(M,K)$ of  $M$  is the $K$-de Rham cohomology of the motive $\LL\Rig^*M$. 
\end{dfn}

\begin{rmk}
	The above definition specializes obviously to the following objects:
	\begin{enumerate}
	\item Smooth rigid analytic varieties $X$ over (any valued subfield of) $K'^\flat$: it suffices to consider $M=\Lambda(X)$.
	\item Arbitrary quasi-projective algebraic varieties $X$ over (any subfield of) $K'^\flat$: it suffices to consider $M=\LL\Rig^*\Lambda(X)$. If $X$ is smooth, this motive is $\Lambda(X^{\an})$.
	\end{enumerate}
\end{rmk}

\begin{rmk}
If $K$ has mixed characteristic, the $K$-de Rham cohomology over $\RigDM^{\eff}(K,\Lambda)$ coincides with the usual overconvergent de Rham cohomology, which in turn extends the usual de Rham cohomology of algebraic varieties (see \cite[Theorem 2.3]{gk-dR}). In particular,  the $K$-de Rham cohomology over  $\DM^{\eff}(K,\Lambda)$ is the usual algebraic de Rham cohomology with coefficients in $K$.
\end{rmk}

\begin{rmk}
	In general, the $K$-de Rham overconvergent cohomology of algebraic varieties over $K^\flat$ does not coincide with rigid cohomology:  indeed, its coefficients lie in $K$ which is way ``smaller" than a non-archimedean field with residue $K^\flat$. 
	We also point out that the topology of $K^\flat$ plays a crucial role in this definition, as it is not considered merely as an abstract field (the tilting equivalence is used crucially in the construction).
\end{rmk}

\begin{dfn}\label{cpt}
	An object $X$ of a triangulated category with small sums $\catT$ is \emph{compact} if for any small collection $\{Y_i\}$ of objects in $\catT$ one has  \[\Hom(X,\bigoplus Y_i)\cong\bigoplus\Hom(X,Y_i).\]
\end{dfn}

\begin{exm}
	The motive $\Lambda(X)$ of a quasi-compact smooth variety $X$ over $K$ is compact in $\RigDA^{\eff}(K,\Lambda)$ (see \cite[Proposition 1.2.34]{ayoub-rig}).
\end{exm}

\begin{prop}\label{dR}
	The $K$-de Rham cohomology $H^i_{\dR}(M,K)$ satisfies the following properties:
	\begin{enumerate}
		\item It is represented by the motive $\mfG_{K,K'}\Omega^\dagger$ i.e. $H^i_{\dR}(M,K)\cong H_i\Hom_\bullet(M,\mfG_{K.K'}\Omega^\dagger)$.
	\item If $M$ is compact, then it is finite dimensional  and equal to $0$ for $|i|\gg0$.
	\item It satisfies \'etale descent.
	\item It is homotopy-invariant.
	\item It is compatible with field extensions $L'/K'$.
	\item It satisfies the K\"unneth formula on compact motives i.e. $$H^n_{\dR}(M\otimes N,K)\cong\bigoplus_{i+j=n}H^i_{\dR}(M,K)\otimes H^j_{\dR}(N,K)$$
	for $M,N$ compact.
	\end{enumerate}
\end{prop}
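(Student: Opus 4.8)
The plan is to deduce each property of $H^i_{\dR}(-,K)$ from the corresponding property of the overconvergent de Rham motive $\Omega^\dagger$ over a mixed characteristic field, transported through the various equivalences of categories. First I would establish \eqref{1}: by definition $H^i_{\dR}(M,K)=H_i\Hom_\bullet(\mfG_{K',K}M,\Omega^\dagger)$, and since $\mfG_{K',K}$ is a monoidal triangulated equivalence (Theorem \ref{mottilteq}, together with the tilting equivalence on perfectoid motives) with quasi-inverse again of the same shape, the adjunction $\Hom_\bullet(\mfG_{K',K}M,\Omega^\dagger)\cong\Hom_\bullet(M,\mfG_{K,K'}\Omega^\dagger)$ is formal. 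This is the representability statement, and everything else is really about $\mfG_{K,K'}\Omega^\dagger$ viewed as an internal object; the remaining points (3)--(6) then reduce, via the closed monoidal structure on $\RigDM^{\eff}$, to known properties of the functor $M\mapsto\Hom_\bullet(M,N)$ for a fixed motive $N$.

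Concretely: property (3), \'etale descent, holds because $\Hom_\bullet(-,N)$ always sends homotopy colimits to homotopy limits, and \v{C}ech nerves of \'etale hypercovers are homotopy colimits in $\RigDM^{\eff}$ by construction of the category (Remark \ref{modcatmotDM}); so this is automatic and needs no hypothesis on $N$. Property (4), homotopy invariance, is immediate from the fact that $\Lambda_{\tr}(X\times\B^1)\to\Lambda_{\tr}(X)$ is an isomorphism in $\RigDM^{\eff}$, hence $H^i_{\dR}(M\otimes\Lambda(\B^1),K)\cong H^i_{\dR}(M,K)$. For (2), finite-dimensionality and boundedness: if $M$ is compact then so is $\mfG_{K',K}M$ (equivalences preserve compactness), and the point is that the internal-hom $\Hom_\bullet(M',\Omega^\dagger)$ has finitely generated bounded cohomology when $M'$ is compact; I would invoke the comparison with classical overconvergent de Rham cohomology from \cite{vezz-rigidreal} and \cite{gk-dR} (finite dimensionality of overconvergent de Rham cohomology of smooth quasi-compact dagger spaces, extended to all compact motives by d\'evissage along the generators $\Lambda(X)$). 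For (5), compatibility with field extensions $L'/K'$: one chooses a compatible untilt $L$ of $L'^\flat$, so that the base-change functors commute with the tilting equivalences (this is where one uses that tilting is functorial in the perfectoid field), and then invokes the known base-change behaviour of $\Omega^\dagger$ over mixed-characteristic fields. For (6), the K\"unneth formula: since $\mfG_{K',K}$ is \emph{monoidal}, $\mfG_{K',K}(M\otimes N)\cong\mfG_{K',K}M\otimes\mfG_{K',K}N$, so it suffices to know K\"unneth for overconvergent de Rham cohomology on compact motives over $K$, which again comes from \cite{vezz-rigidreal}; compactness guarantees the internal hom commutes with the tensor decomposition and that the K\"unneth spectral sequence degenerates into the displayed direct-sum formula (here one may need $\Lambda=K$ to be a field so that the Tor-terms vanish).

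The step I expect to be the main obstacle is (5), the compatibility with field extensions, because it is the only point where one must genuinely track the interplay between the motivic tilting equivalence and the base-change functors \emph{for two different untilts} $K$ over $K'$ and $L$ over $L'$. One has to make a coherent choice: given $L'/K'$, its tilt $L'^\flat/K'^\flat$, and an untilt $K$ of $K'^\flat$, the untilt $L$ should be the one induced by the same primitive element (i.e. $L$ is the untilt of $L'^\flat$ corresponding to the image of the chosen generator of $\ker(W(\mcO_{K'^\flat})\to\mcO_K)$), so that $\mcO_L=\mcO_K\widehat{\otimes}_{\mcO_{K'^\flat}}\mcO_{L'^\flat}$ and the square of base-change functors on perfectoid motives commutes strictly. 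Once that choice is pinned down, the compatibility of $\mfG$ with base change — which on the perfectoid side is just the functoriality of $\varprojlim A/p$ in $A$, exactly as in the proof of Proposition \ref{premain2} — gives a natural isomorphism $e^*_{L'/K'}\circ\mfG_{K',K}\cong\mfG_{L',L}\circ e^*_{L'/K'}$, and property (5) follows from the corresponding statement for $\Omega^\dagger$ in mixed characteristic (see \cite{vezz-rigidreal}). The other genuinely non-formal inputs, namely finite-dimensionality and K\"unneth, are quoted wholesale from \cite{gk-dR} and \cite{vezz-rigidreal} and transported; I would write (2) and (6) as one-line reductions to those references rather than reproving them.
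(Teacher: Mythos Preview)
Your proposal is correct and follows the same reduction as the paper: transport everything through the monoidal equivalence $\mfG_{K',K}$ and invoke the corresponding properties of overconvergent de Rham cohomology over the mixed-characteristic field $K$ from \cite{vezz-rigidreal}. The paper's own proof is a two-line version of exactly this --- it simply observes that by construction the question reduces to the overconvergent de Rham cohomology of analytic $K$-varieties and cites \cite[Section 5]{vezz-rigidreal} for all six points at once, without treating them separately.

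Your elaboration of each item is fine, but you overweight point (5): the paper does not flag it as an obstacle and absorbs it into the same citation, the compatibility of $\mfG$ with base change being essentially formal (as you yourself note, it comes down to the functoriality of tilting in the perfectoid field, cf.\ the argument of Proposition \ref{premain2}). So your instinct to track the choice of untilt for $L'^\flat$ is sound, but in the paper's treatment this is not where the work lies --- all the genuine content is outsourced to \cite{vezz-rigidreal} and \cite{gk-dR}.
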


\begin{proof}
By construction, it suffices to prove the statement for the $K$-de Rham cohomology of analytic $K$-varieties, that is the overconvergent de Rham cohomology with coefficients in $K$. All the properties above are proved in \cite[Section 5]{vezz-rigidreal}.
\end{proof}

\begin{rmk}
	By means of the equivalence $\PerfDA(K,\Lambda)\cong\RigDA(K,\Lambda)$ we can define a $K'$-de Rham cohomology for smooth perfectoid spaces (that is, perfectoid spaces which are locally \'etale over a perfectoid poly-disc) enjoying all the properties listed in Proposition \ref{dR}.
\end{rmk}

The following corollary specifies how this cohomology theory is an extension of rigid cohomology.

\begin{cor}
	The $K$-de Rham cohomology of a rigid analytic variety $X$ over $K^\flat$ with good reduction coincides with the rigid cohomology of its reduction $\bar{X}$ with coefficients in $K$.
	\end{cor}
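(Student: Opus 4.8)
The plan is to identify the motive $\mfG_{K^\flat,K}\Lambda(X)$ explicitly inside $\RigDM^{\eff}(K,K)$ and then to invoke the comparison between the overconvergent de Rham realization and rigid cohomology recalled in Remark \ref{Frobac}; no new computation is needed, the statement being a concatenation of results already at our disposal.

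First I would unwind the hypothesis. Saying that $X$ has good reduction means $X\cong\mfX_\eta$ for a smooth formal scheme $\mfX$ of finite type over $K^{\flat\circ}$ whose reduction $\bar X=\mfX_\sigma$ is a smooth $k$-variety. By Theorem \ref{dd} the generic fibre functor sends $\Lambda(\mfX)$ to $\Lambda(\mfX_\eta)=\Lambda(X)$, while $\RR(\cdot)_{\sigma*}\colon\DA^{\eff}(k,K)\cong\FormDA^{\eff}(K^{\flat\circ},K)$ is an equivalence carrying $\Lambda(\bar X)$ to $\Lambda(\mfX)$. Composing, and transporting along the equivalence $\DA^{\eff}_{\Frobet}(k,K)\cong\DM^{\eff}_{\et}(k,K)$ of Theorem \ref{DADM}, one obtains a canonical isomorphism $\Lambda(X)\cong\LL(\cdot)^*_\eta\RR(\cdot)_{\sigma*}\Lambda(\bar X)=\xi^\flat\Lambda(\bar X)$ in $\RigDM^{\eff}(K^\flat,K)$; when $\mfX$ is the base change $\bar X\widehat{\times}_k\Spf K^{\flat\circ}$ one may read this through Proposition \ref{chiflat} as $\Lambda(X)\cong\Lambda(Q^{\rig}(\bar X))$.

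Next I would apply the tilting equivalence. Since $\mfG_{K^\flat,K}$ is a quasi-inverse of the functor $\mfG_{K,K^\flat}$ of Theorem \ref{mottilteq}, the commutative triangle of Theorem \ref{main} reads $\xi\cong\mfG_{K^\flat,K}\circ\xi^\flat$, so that
\[
\mfG_{K^\flat,K}\Lambda(X)\;\cong\;\mfG_{K^\flat,K}\,\xi^\flat\Lambda(\bar X)\;\cong\;\xi\,\Lambda(\bar X)
\]
canonically in $\RigDM^{\eff}(K,K)$. By the definition of $K$-de Rham cohomology, $H^i_{\dR}(X,K)$ is then the overconvergent de Rham cohomology of the motive $\mfG_{K^\flat,K}\Lambda(X)\cong\xi\,\Lambda(\bar X)$. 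Finally, by Remark \ref{Frobac} (that is, the main result of \cite{vezz-rigidreal}) the cohomology theory on $\DM^{\eff}(k,K)$ obtained by composing $\xi\colon\DM(k,K)\to\RigDM(K,K)$ with the overconvergent de Rham complex $\Omega^\dagger$ is precisely rigid cohomology with coefficients in $K$; hence $H^i_{\dR}(X,K)\cong H^i_{\rig}(\bar X/K)$, which is the assertion.

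The only point requiring care is the first step: one must check that the various identifications of $\Lambda(X)$ with $\xi^\flat\Lambda(\bar X)$ are compatible with the equivalences in play (motives with and without transfers, and $\FormDA^{\eff}(K^{\flat\circ})$ versus $\DA^{\eff}(k)$), and that the functor $\mfG_{K^\flat,K}$ used in the definition of $K$-de Rham cohomology is literally the inverse of the functor appearing in Theorem \ref{main}. Once this bookkeeping is in place the corollary follows formally, with Theorem \ref{main} doing the essential work.
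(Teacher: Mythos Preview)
Your proof is correct and follows essentially the same route as the paper: identify $\Lambda(X)\cong\xi^\flat\Lambda(\bar X)$ via Theorem \ref{dd}, transport through the tilting equivalence using Theorem \ref{main} to obtain $\mfG_{K^\flat,K}\Lambda(X)\cong\xi\Lambda(\bar X)$, and then invoke the comparison of the overconvergent de Rham realization of $\xi M$ with rigid cohomology from \cite{vezz-rigidreal}. Your additional bookkeeping remarks (on transfers and on $\mfG_{K^\flat,K}$ being the inverse of $\mfG_{K,K^\flat}$) are reasonable but not strictly needed.
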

	
	\begin{proof}
Let $\mfX$ be a smooth formal model of $X$. From Theorems \ref{dd} and \ref{main} we obtain the following isomorphisms: $$\mfG_{K^\flat,K}\Lambda(X)\cong\mfG_{K^\flat,K}\LL(\cdot)_\eta^*\Lambda(\mfX)\cong\mfG_{K^\flat,K}\xi\LL(\cdot)_\sigma^*\Lambda(\mfX)\cong\xi\Lambda(\mfX_\sigma).$$
We already showed in \cite{vezz-rigidreal} that the overconvergent de Rham cohomology of a motive of the form $\xi M$ coincides with the rigid cohomology of $M$ hence the claim.
	\end{proof}
	
	\begin{rmk}\label{riganddeR}
More generally, we proved that the $K$-de Rham cohomology of a rigid analytic  motive over $K^\flat$ of the form $\LL(\cdot)_\eta^*M$  coincides with the rigid cohomology of its reduction $\LL(\cdot)_\sigma^*M$ with coefficients in $K$. 
	\end{rmk}

		The following corollary provides an alternative method to define and compute rigid cohomology. In order to pass from the characteristic $p$ side to the characteristic $0$ side, we can either find a formal model over a valuation ring of mixed characteristic and then consider its rigid analytic generic fiber (this is the classic recipe of Monsky and Washnitzer) we can alternatively take the perfection, and then use Scholze's tilting, at the cost of enlarging the ring of coefficients. We recall that the chosen section $k\ra K^{\flat\circ}$ induces a map $k\ra K^{\circ}/p$ and hence a map $W(k)\ra K^{\circ}$.
		
		\begin{cor}
		\label{rigtilt}
		Let $k$ be the residue field of a perfectoid field $K$. 
Up to a base change along $W(k)[1/p]\ra K$, we can compute the rigid cohomology of a smooth variety $\bar{X}$ over $k$ as  the $K$-de Rham cohomology of $\bar{X}_{K^\flat}$.
		\end{cor}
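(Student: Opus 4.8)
The plan is to deduce Corollary \ref{rigtilt} directly from the preceding corollary together with the base-change behaviour of the constructions. First I would recall the setup: by Remark \ref{Frobac} (and the results of \cite{vezz-rigidreal}) the rigid cohomology of a smooth $k$-variety $\bar{X}$ is computed, after extending scalars from $W(k)[1/p]$ to a sufficiently large complete discretely valued field, by the de Rham cohomology of the motive $\xi\Lambda(\bar{X})$ in $\RigDM^{\eff}$ over a mixed-characteristic field with residue field $k$; it takes values naturally in $W(k)[1/p]$-vector spaces. The chosen section $k\ra K^{\flat\circ}$ gives $k\ra K^\circ/p$, hence $W(k)\ra K^\circ$ and $W(k)[1/p]\ra K$, so it makes sense to base change rigid cohomology along this map.

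Next I would invoke the previous corollary: since $\bar{X}$ is smooth, it has (Zariski-locally, and one reduces to this by \'etale/Zariski descent, property (3) of Proposition \ref{dR}) a smooth formal model $\mfX$ over $K^{\flat\circ}$, namely the one underlying $Q^{\rig}(\bar{X})$, and its rigid analytic generic fiber $X = \mfX_\eta = Q^{\rig}(\bar X)$ is a rigid variety over $K^\flat$ with good reduction $\mfX_\sigma \cong \bar{X}$ (using the identification $(\bar X\times_k K^{\flat\circ})_\sigma\cong\bar X$ from the proof of Proposition \ref{chiflat}). The previous corollary then says that the $K$-de Rham cohomology $H^i_{\dR}(\Lambda(X_{K^\flat}),K) = H^i_{\dR}(\Lambda(\bar X_{K^\flat}),K)$ equals the rigid cohomology of $\bar X$ with coefficients in $K$, i.e. the base change along $W(k)[1/p]\ra K$ of ordinary rigid cohomology. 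This is exactly the assertion, once one unwinds that $H^i_{\dR}(\bar X_{K^\flat},K)$ is by definition $H^i_{\dR}$ of the motive $\mfG_{K^\flat,K}\LL\Rig^*\Lambda(\bar X)$ and that $\LL\Rig^*\Lambda(\bar X)=\Lambda(\bar X^{\an}_{K^\flat})$ for smooth $\bar X$.

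The one genuinely substantive point — the rest being bookkeeping about which field the coefficients live in — is the compatibility of the identification ``rigid cohomology $=$ overconvergent de Rham cohomology of $\xi M$'' with \emph{base change of the mixed-characteristic coefficient field}. Concretely, rigid cohomology of $\bar X/k$ is intrinsically a $W(k)[1/p]$-module, whereas the de Rham realization of $\xi\Lambda(\bar X)$ in $\RigDM^{\eff}(-,K)$ outputs a $K$-vector space; one must check these agree after $-\otimes_{W(k)[1/p]}K$. This is where the hypothesis that the map $W(k)[1/p]\ra K$ is induced by the chosen section $k\ra K^{\flat\circ}$ enters: it guarantees the two constructions use compatible liftings of $k$, so the comparison isomorphism of \cite{vezz-rigidreal} is $K$-linear after the stated base change. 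I expect this compatibility to be the main obstacle, but it is handled by the functoriality of the de Rham realization in the coefficient ring together with the base-change property (5) of Proposition \ref{dR}, so the proof reduces to citing the previous corollary and tracking coefficients; no new computation is required.
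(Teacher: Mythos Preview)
Your argument is correct and follows the same route as the paper, which simply says ``read the equivalence of Remark \ref{riganddeR} backwards'': you take $\bar X_{K^\flat}$, observe that its analytification is $Q^{\rig}(\bar X)$ (a rigid variety over $K^\flat$ with good reduction $\bar X$), and apply the preceding corollary. Two minor comments: the Zariski-local reduction is unnecessary since $(\bar X\times_k K^{\flat\circ})^\wedge$ is already globally smooth over $K^{\flat\circ}$ when $\bar X$ is smooth over $k$; and your careful discussion of the base-change compatibility of coefficients is more explicit than the paper's, which leaves ``rigid cohomology with coefficients in $K$'' to be understood as the $W(k)[1/p]\to K$ base change without further comment.
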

		
		\begin{proof}
It suffices to read the equivalence of Remark \ref{riganddeR} backwards.
		\end{proof}

\begin{rmk}
In the above definition of a de Rham cohomology for    $X/K^\flat$ a choice of an untilt of $K^\flat$ is necessary. It is expected to ``globalize'' this definition and to associate to a variety $X/K^\flat$ a vector bundle over the Fargues-Fontaine curve related to $K^\flat$ such that its stalk at a point $x_K$ corresponding to an untilt $K$ of $K^\flat$ coincides with $H^i_{dR}(X,K)$.
\end{rmk}

\section{On a conjecture of Ayoub}
\label{secconj}

Along this section, we make the following hypotheses. In particular, we no longer require $K$ to be perfectoid.

\begin{assu}
	We let $K$ be a complete valued field with respect to a non-archimedean valuation of rank $1$. We let $K^\circ$ be its ring of integers and $k$ its residue field, which we assume to have characteristic $p>0$. We also suppose that $\Lambda$ is a $\Q$-algebra. 
\end{assu}

In the final sections of \cite{ayoub-rig} Ayoub proves a series of results on the relations between the categories of rigid motives $\RigDM(K,\Lambda)$ and some categories of motives defined over the residue field $k$. The most striking result holds for $K\cong\C(\!(t)\!)$ where $\RigDM(K,\Lambda)$ is shown to embed in $\DM(\G_{m,\C},\Lambda)$ which is the category of  motives over the base $\G_{m,\C}$ (see \cite[Section 11]{cd}). This fact allowed Ayoub to define  a motivic version of the decomposition Galois group as well as some important non-obvious applications  (\cite{ayoub-h2}, \cite{ayoub-conj} and \cite{ais}).

The case of positive residue characteristic is more delicate. An analogous, yet weaker, statement for fields of the form $k(\!(t)\!)$ with $\car k=p$ is still proved in \cite{ayoub-rig} and deals with rigid analytic motives \emph{of good reduction}. We now recall it, and prove its analogue in the mixed characteristic case, which was conjectured by Ayoub. It is surely hoped to adapt the motivic Galois group-theoretical machinery to this situation in order to produce some generalizations of the consequences obtained in the equal characteristic $0$ case.

\begin{rmk}
In this section, we will consider complete valued fields which may not be perfect. Therefore, we can not refer to Theorem \ref{DADM}. All our statements will be given in terms of motives with transfers,  even though in proofs, motives without transfers will also be used.
\end{rmk}

\begin{dfn}
We denote by $\RigDM^{\eff,\gr}(K,\Lambda)$ [resp.  $\RigDM^{\gr}(K,\Lambda)$] the  triangulated subcategory of $\RigDM^{\eff}(K,\Lambda)$ [resp.  $\RigDM(K,\Lambda)$] closed by small sums generated by [Tate twists of] motives of the form $\Lambda_{\tr}(\mfX_\eta)$ with $\mfX$ a smooth affine formal scheme, topologically of finite type over $K^\circ$. Their objects are called \emph{motives of good reduction}.
\end{dfn}

\begin{rmk}
Since the motives of the form $\Lambda(\mfX)$ with $\mfX$ affine and smooth generate $\FormDA^{\eff}(K^\circ,\Lambda)$ as a triangulated category with small sums, we deduce that the category $\RigDM^{\eff,\gr}(K,\Lambda)$ [resp.  $\RigDM^{\gr}(K,\Lambda)$]  is equally generated by the motives of the form $\LL a_{\tr}\circ\LL(\cdot)_\eta M$ with $M$ in $\FormDA^{\eff}(K^\circ,\Lambda)$ [resp. in $\FormDA(K^\circ,\Lambda)$] where $\LL a_{\tr}\colon\RigDA^{\eff}(K,\Lambda)\ra\RigDM^{\eff}(K,\Lambda)$ is the ``adding transfers" functor, that is the (left Quillen derived) functor  induced by the inclusion $\RigSm/K\ra\RigCor/K$.
\end{rmk}

\begin{rmk}
The category $\RigDM^{\gr}(K,\Lambda)$ is denoted by $\RigDM^{\br}(K,\Lambda)$ in \cite{ayoub-rig} according to the French terminology \emph{bonne reduction}.
\end{rmk}

\begin{rmk}\label{allpotgr}
Having a description of the category $\RigDM^{\eff,\gr}(K,\Lambda)$ has important consequences also for arbitrary compact motives in $\RigDM^{\eff}(K,\Lambda)$. Indeed, as shown in \cite[Theorem 2.5.34]{ayoub-rig} \emph{any} compact motive $M$ is of good reduction, up to a finite  extension $K'/K$ of the base field. %
\end{rmk}

Along this section, we will make constant use of the following theorem, proven by Ayoub. It will allow us to restrict to compact objects in many proofs, which is technically convenient (see for example \cite[Lemma 1.3.32]{ayoub-rig}).

\begin{thm}[{\cite[Proposition 1.2.34]{ayoub-rig}}]
	The motive $\Lambda(X)$ of a smooth quasi-compact rigid analytic variety $X$ is compact in $\RigDA^{\eff}_{\et}(K,\Lambda)$. Such motives form a set of compact generators for $\RigDM^{\eff,\gr}(K,\Lambda)$.%
\end{thm}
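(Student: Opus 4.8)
The statement to prove is the cited Theorem (Ayoub's Proposition 1.2.34, as reproduced in the excerpt): the motive $\Lambda(X)$ of a smooth quasi-compact rigid analytic variety $X$ is compact in $\RigDA^{\eff}_{\et}(K,\Lambda)$, and such motives form a set of compact generators for $\RigDM^{\eff,\gr}(K,\Lambda)$.

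The plan is to argue in two stages. First I would establish compactness of $\Lambda(X)$ for $X$ smooth and quasi-compact. The key observation is that $\RigDA^{\eff}_{\et}(K,\Lambda)$ is, by Remark \ref{modcatmot}, the homotopy category of a left Bousfield localisation of the projective model structure on complexes of ($\et$-)sheaves on $\RigSm/K$ (localised at the $\B^1$-homotopies and the $\et$-hypercovers). Compactness in the unlocalised derived category $\catD(\Sh_{\et}(\RigSm/K,\Lambda))$ of $\Lambda(X)$ for $X$ quasi-compact follows because the $\et$-topos of $\RigSm/K$ restricted to $X$ is coherent: $X$ quasi-compact implies every $\et$-hypercover admits a finite refinement, so $\Lambda(X)$ is a compact object of the derived category of sheaves. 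Passing to the Bousfield localisation, one must check that the localisation functor $\catD(\Sh_{\et})\to\RigDA^{\eff}_{\et}$ preserves compactness on this class of objects; the standard mechanism is that the localising subcategory we quotient by (generated by cones of $\Lambda(X\times\B^1)\to\Lambda(X)$ for $X$ ranging over a set of quasi-compact generators) is generated by compact objects, so the Verdier quotient of a compactly generated triangulated category by a localising subcategory generated by compact objects is again compactly generated, and the quotient functor sends these compact generators to compact generators. Concretely one invokes Neeman's theory (or the form of it used throughout \cite{ayoub-rig}, e.g.\ \cite[Lemma 1.3.32]{ayoub-rig}): a set of compact generators maps to a set of compact generators under such a quotient, and $\Lambda(X)$ with $X$ quasi-compact lies in the closure.

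Second, I would prove that these motives generate $\RigDM^{\eff,\gr}(K,\Lambda)$ as a triangulated category with small sums, and that they are compact there too. By definition $\RigDM^{\eff,\gr}(K,\Lambda)$ is the localising subcategory of $\RigDM^{\eff}(K,\Lambda)$ generated by the $\Lambda_{\tr}(\mfX_\eta)$ with $\mfX$ smooth affine formal of tft over $K^\circ$; so it suffices to exhibit each such generator as built from $\Lambda_{\tr}(X)$'s with $X$ smooth quasi-compact — but $\mfX_\eta$ itself is smooth and quasi-compact (the generic fibre of an affine formal scheme of tft is an affinoid, hence quasi-compact), so the generating family is literally a subfamily of $\{\Lambda_{\tr}(X)\}$ and nothing further is needed for generation. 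For compactness inside $\RigDM^{\eff,\gr}$, one transports compactness from $\RigDA^{\eff}_{\et}$ via Theorem \ref{DADM} (the equivalence $\RigDA^{\eff}_{\Frobet}\cong\RigDM^{\eff}_{\et}$ under $\Q\subset\Lambda$), noting that a full triangulated subcategory closed under small sums inherits compactness of objects that were compact in the ambient category. One must be slightly careful that "add transfers" $\LL a_{\tr}$ sends the compact generators to compact generators — this is again the Neeman-style statement that a left adjoint between compactly generated triangulated categories preserving a set of compact generators (here $\Lambda(X)\mapsto\Lambda_{\tr}(X)$) preserves compactness of those generators, provided the right adjoint preserves small sums, which holds here because $\Lambda_{\tr}(X)$ with $X$ quasi-compact represents a functor commuting with sums.

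The main obstacle I expect is the compactness statement in the \emph{localised} (motivic, $\B^1$-local) category rather than in the raw derived category of sheaves: one genuinely needs that $\B^1$-localisation does not destroy compactness, which rests on the fact that the Bousfield localisation here is by a \emph{compactly generated} localising subcategory and on Neeman's localisation theorem for compactly generated triangulated categories. Getting the bookkeeping right — identifying a set of compact generators before localising, checking the cones $\Lambda(X\times\B^1)\to\Lambda(X)$ are compact, and then applying the quotient theorem — is the technical heart; everything else (quasi-compactness of affinoid generic fibres, coherence of the small $\et$-site of a quasi-compact rigid space, transfer of compactness along $\LL a_{\tr}$) is comparatively routine. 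Since the excerpt attributes the result to \cite[Proposition 1.2.34]{ayoub-rig}, in practice I would simply cite Ayoub's proof for the compactness, and give the short generation argument above in my own words.
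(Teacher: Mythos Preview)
The paper gives no proof of this statement: it is simply cited from \cite[Proposition 1.2.34]{ayoub-rig}, with a one-line remark afterwards that the analogous statements carry over to $\RigDM^{\eff}(K,\Lambda)$ and to the stable categories. Your final sentence (``in practice I would simply cite Ayoub's proof for the compactness'') is therefore exactly what the paper does, and is all that is required here.

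Your preceding sketch of how Ayoub's argument runs is essentially on target --- compactness in the derived category of \'etale sheaves via coherence of the small \'etale site of a quasi-compact rigid space, then preservation under Bousfield localisation by a set of compact objects via Neeman's theorem --- and this is indeed the mechanism behind \cite[Proposition 1.2.34]{ayoub-rig}. For the second clause (compact generation of $\RigDM^{\eff,\gr}(K,\Lambda)$), note a small wrinkle in your reading: ``such motives'' here cannot mean \emph{all} $\Lambda(X)$ with $X$ smooth quasi-compact, since most of these do not lie in the good-reduction subcategory; the intended generating set is the motives $\Lambda_{\tr}(\mfX_\eta)$ for smooth affine formal $\mfX$, which are quasi-compact and hence compact by the first clause (transported across $\LL a_{\tr}$ as you outline), and which generate by the very definition of $\RigDM^{\eff,\gr}$. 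Your argument already covers this once one reads the statement that way.
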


The theorem implies that similar statements also hold also in the category $\RigDM^{\eff}(K,\Lambda)$ as well as in their stable counterpart. One may also consider smooth \emph{affinoid} varieties as a set of generators (as any smooth variety is locally affinoid). 

We now consider compact motives in the category $\RigDM^{\eff,\gr}(K,\Lambda)$.

\begin{prop}\label{cpt2}
An object $M$ of $\RigDM^{\eff,\gr}(K,\Lambda)$ [resp.  $\RigDM^{\gr}(K,\Lambda)$] is compact if and only if it is compact in the bigger category $\RigDM^{\eff}(K,\Lambda)$ [resp.  $\RigDM(K,\Lambda)$].
\end{prop}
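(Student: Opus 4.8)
The statement to prove is Proposition \ref{cpt2}: an object $M$ of $\RigDM^{\eff,\gr}(K,\Lambda)$ (or its stable version) is compact there if and only if it is compact in the ambient category $\RigDM^{\eff}(K,\Lambda)$. The plan is to exploit the fact that $\RigDM^{\eff,\gr}(K,\Lambda)$ is, by definition, the localizing (i.e.\ triangulated, closed under small sums) subcategory generated by a set of objects which are \emph{known to be compact in the ambient category} — namely the motives $\Lambda_{\tr}(\mfX_\eta)$ with $\mfX$ smooth affine formal of finite type over $K^\circ$, whose compactness in $\RigDM^{\eff}(K,\Lambda)$ follows from \cite[Proposition 1.2.34]{ayoub-rig} (quoted above). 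This is precisely the setup of Neeman's theory of compactly generated triangulated categories.

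First I would observe that $\RigDM^{\eff,\gr}(K,\Lambda)$ is compactly generated, with the generators $G = \{\Lambda_{\tr}(\mfX_\eta)\}$ being compact \emph{in the subcategory} as well. (Compactness of each $\Lambda_{\tr}(\mfX_\eta)$ in the subcategory is immediate: a $\Hom$ out of it, computed against a direct sum of objects lying inside $\RigDM^{\eff,\gr}$, is the same whether computed in the subcategory or the ambient category, since the subcategory is full; and it is compact in the ambient category, hence the direct sum splits off.) By Neeman's characterization (see e.g.\ \cite[Lemma 1.3.32]{ayoub-rig} or Neeman's ``Triangulated categories''), the compact objects of a compactly generated triangulated category are exactly the objects of the thick (= triangulated, idempotent-complete) subcategory generated by a chosen set of compact generators. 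Thus the compact objects of $\RigDM^{\eff,\gr}(K,\Lambda)$ form the thick closure $\langle G\rangle^{\mathrm{thick}}$.

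Next I would argue that this same thick closure describes the compact objects of $\RigDM^{\eff}(K,\Lambda)$ that happen to lie in $\RigDM^{\eff,\gr}$. One inclusion is formal: any object of $\langle G\rangle^{\mathrm{thick}}$, being built from finitely many compact-in-the-ambient-category objects via cones, shifts and retracts, is compact in $\RigDM^{\eff}(K,\Lambda)$. For the converse, suppose $M \in \RigDM^{\eff,\gr}$ is compact in $\RigDM^{\eff}$. Then $M$ is a fortiori compact as an object of the subcategory $\RigDM^{\eff,\gr}$ (the subcategory is closed under the small sums in question, and $\Hom$-sets agree), so by the previous paragraph $M \in \langle G\rangle^{\mathrm{thick}}$ — which is to say $M$ is compact in the subcategory. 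Combining the two directions: $M$ is compact in $\RigDM^{\eff,\gr}$ iff $M \in \langle G\rangle^{\mathrm{thick}}$ iff ($M \in \RigDM^{\eff,\gr}$ and $M$ compact in $\RigDM^{\eff}$). The stable case is identical, replacing $G$ by its Tate twists and invoking \cite[Proposition 1.2.34]{ayoub-rig} together with the fact that the stabilization preserves compactness of the twisted generators; and the without-transfers versions used in proofs are handled the same way via Theorem \ref{DADM}.

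The main obstacle — though it is really a bookkeeping point rather than a deep one — is verifying carefully that the hypotheses of the Neeman-type structure theorem genuinely apply to \emph{both} categories with the \emph{same} set of generators: one must check that $G$ consists of objects compact in each of the two categories, and that $\RigDM^{\eff,\gr}$ is by definition the localizing subcategory they generate, so that there is no discrepancy between ``thick closure of $G$ computed inside the subcategory'' and ``thick closure of $G$ computed inside the ambient category intersected with the subcategory''. Once one is convinced these coincide (which follows from fullness of the inclusion plus the fact that idempotents split in both), the proposition is a direct corollary. I would also note explicitly that $\RigDM^{\eff}(K,\Lambda)$ is itself compactly generated by the $\Lambda_{\tr}(X)$ with $X$ smooth quasi-compact, so that ``compact in $\RigDM^{\eff}$'' has its expected meaning, but this is not strictly needed for the argument above.
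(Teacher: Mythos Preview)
Your proposal is correct and follows essentially the same route as the paper: both arguments use the Neeman-type characterization of compact objects in a compactly generated triangulated category (the paper cites \cite[Theorem 2.1.24]{ayoub-th1}) to identify the compact objects of $\RigDM^{\eff,\gr}(K,\Lambda)$ with the thick closure of the generators $\Lambda_{\tr}(\mfX_\eta)$, and then observe that these generators are compact in the ambient category. Your write-up is more explicit about the easy direction (compact in the ambient implies compact in the full subcategory closed under sums), which the paper leaves implicit in its ``hence the claim''.
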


\begin{proof}
By \cite[Theorem 2.1.24]{ayoub-th1} we obtain that the triangulated subcategory of compact objects in $\RigDM^{\eff,\gr}(K,\Lambda)$ [resp.  $\RigDM^{\gr}(K,\Lambda)$] is generated by the [twists of]  direct factors of motives of the form $\Lambda_{\tr}(\mfX_\eta)$ with $\mfX$ an affine smooth formal scheme. Such motives are also compact in $\RigDM^{\eff}(K,\Lambda)$ hence the claim.
\end{proof}

\begin{rmk}
Thanks to the previous proposition, the class of compact motives of $\RigDM(K,\Lambda)$ which are also of good reduction coincide with the class of compact objects in the subcategory $\RigDM^{\gr}(K,\Lambda)$ of motives of good reduction. We can then unambiguously refer to them as ``compact motives of good reduction''.
\end{rmk}

The following theorem by Ayoub  amounts to a description  of rigid motives of good reduction in terms of algebraic motives over the residue field. We recall that $\Lambda$ is supposed to be a $\Q$-algebra.

\begin{dfn}
Let $S$ be an algebraic variety over a field $k$ and let $\Lambda$ be a ring. We denote by $\DM^{\eff}(S,\Lambda)$ [resp. $\DM(S,\Lambda)$] the category of effective motives [resp. motives] over $S$ following Voevodsky. We denote by $\Un\DM^{\eff}(k,\Lambda)$ [resp. $\Un\DM(k,\Lambda)$] the triangulated subcategory with small sums of $\DM^{\eff}(\G_{m,k}^n,\Lambda)$ [resp. $\DM(\G_{m,k}^n,\Lambda)$] generated by [twists of] motives of the varieties of the form $\G_{m,X}^n$ with $X$ smooth over $k$.
\end{dfn}

\begin{thm}[{\cite[Theorem 2.5.57]{ayoub-rig}}]\label{ayoubgr}
	Let $K$ be a complete valued field of equal characteristic, with residue field $k$ and a value group which is free of rank $n$ over $\Z$. Fix a section $k\ra K^\circ$ of the reduction and some elements $\pi_1,\ldots,\pi_n$ of $K^\circ$ whose values generate $|K^\times|$. If the characteristic of $K$ is positive, we also suppose that $k[\pi_1^{\pm1},\ldots,\pi_n^{\pm1}]$ is dense in $K$. 
The composite functors
\[
\xymatrix @R=.2pc{
\Un\DM^{\eff}(k,\Lambda)\ar@{^{(}->}[r] &\DM^{\eff}(\G^n_{m,k},\Lambda)\ar[rr]^{(\pi_1,\ldots,\pi_n)^*}&&\DM^{\eff}(K,\Lambda)\ar[r]^-{\Rig^*}&\RigDM^{\eff}(K,\Lambda)\\
\Un\DM(k,\Lambda)\ar@{^{(}->}[r] &\DM(\G^n_{m,k},\Lambda)\ar[rr]^{(\pi_1,\ldots,\pi_n)^*}&&\DM(K,\Lambda)\ar[r]^-{\Rig^*}&\RigDM(K,\Lambda)
}\]
take values in the subcategories of motives of good reduction, and induce monoidal, triangulated equivalences
\[
\Un\DM^{\eff}(k,\Lambda)\cong \RigDM^{\eff,\gr}(K,\Lambda)\quad  \Un\DM(k,\Lambda)\cong \RigDM^{\gr}(K,\Lambda)
\]
\end{thm}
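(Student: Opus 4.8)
The statement is due to Ayoub, so the aim is to reconstruct his argument, using the tools recalled above. Write $R$ for the composite functor in either line of the statement; it is a composite of left Quillen monoidal functors, hence monoidal, triangulated and compatible with small sums, so monoidality of the final equivalence is automatic once $R$ is shown to be an equivalence. The proof then has three reductions and one core input. First, a reduction to the effective case: all functors involved commute with the Tate twist $\Lambda(1)$, and both $\Un\DM(k,\Lambda)$ and $\RigDM^{\gr}(K,\Lambda)$ are generated, as triangulated categories with small sums, by Tate twists of their effective parts, so it suffices to treat the effective equivalence. Next, that $R$ lands in motives of good reduction: on a generating family one computes $R(\Lambda_{\tr}(\G_{m,X}^n))\cong\Lambda_{\tr}(X_K^{\an})$ for $X$ smooth affine over $k$, by writing $\G_{m,X}^n=X\times_k\G_{m,k}^n$ and base changing along $(\pi_1,\ldots,\pi_n)\colon\Spec K\to\G_{m,k}^n$; and $X_K^{\an}$ has good reduction, with formal model $\widehat{X\times_k K^\circ}$ afforded by the chosen section $k\to K^\circ$. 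Since $R$ preserves small sums, it takes values in $\RigDM^{\eff,\gr}(K,\Lambda)$.

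For essential surjectivity I would use the special-fibre equivalence $\FormDM^{\eff}(K^\circ,\Lambda)\cong\DM^{\eff}(k,\Lambda)$ (Ayoub's \cite[Corollary 1.4.24]{ayoub-rig}; cf.\ Theorem \ref{dd}), under which two formal schemes with the same special fibre have isomorphic motives, together with the fact that the functors $X\mapsto\Lambda_{\tr}(X_K^{\an})$ and $X\mapsto\Lambda_{\tr}\bigl((\widehat{X\times_k K^\circ})_\eta\bigr)$ on $\DM^{\eff}(k,\Lambda)$ are canonically isomorphic \cite[Lemma 2.5.59]{ayoub-rig}. Every generator $\Lambda_{\tr}(\mfX_\eta)$ of $\RigDM^{\eff,\gr}(K,\Lambda)$, with $\mfX$ smooth affine formal over $K^\circ$ and $\mfX_\sigma=:X$, is then isomorphic to $\Lambda_{\tr}\bigl((\widehat{X\times_k K^\circ})_\eta\bigr)\cong\Lambda_{\tr}(X_K^{\an})=R(\Lambda_{\tr}(\G_{m,X}^n))$. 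Hence the image of $R$ generates the target, and since $R$ preserves small sums it is essentially surjective.

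Full faithfulness is the heart. I would invoke the standard criterion: a triangulated, small-sum-preserving functor that induces an isomorphism on $\Hom$-groups between a set of compact generators, whose images are compact and generate the target, is an equivalence. The compact generators of $\Un\DM^{\eff}(k,\Lambda)$ are the $\Lambda_{\tr}(\G_{m,X}^n)(j)[m]$ with $X$ smooth affine over $k$, sent by $R$ to the $\Lambda_{\tr}(X_K^{\an})(j)[m]$, which are compact in $\RigDM^{\eff}(K,\Lambda)$; so one must check that $R$ induces an isomorphism
\[
\Hom_{\Un\DM^{\eff}(k)}\!\bigl(\Lambda_{\tr}(\G_{m,X}^n),\Lambda_{\tr}(\G_{m,Y}^n)(j)[q]\bigr)\;\longrightarrow\;\Hom_{\RigDM^{\eff}(K)}\!\bigl(\Lambda_{\tr}(X_K^{\an}),\Lambda_{\tr}(Y_K^{\an})(j)[q]\bigr).
\]
On the left, writing $\Lambda_{\tr}(\G_{m,X}^n)=f^*\Lambda_{\tr}(X)$ for the structure morphism $f\colon\G_{m,k}^n\to\Spec k$, the adjunction $(f_\#,f^*)$ (with $f$ smooth) and the Künneth isomorphism identify the source with $\Hom_{\DM^{\eff}(k)}\bigl(\Lambda_{\tr}(X)\otimes\Lambda_{\tr}(\G_{m,k}^n),\Lambda_{\tr}(Y)(j)[q]\bigr)$. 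On the right, passing to the formal models and using the adjunction $(\LL(\cdot)_\eta^*,\RR(\cdot)_{\eta*})$ together with the special-fibre equivalence, the target becomes $\Hom_{\DM^{\eff}(k)}\bigl(\Lambda_{\tr}(X),\Psi(\Lambda_{\tr}(Y)(j))[q]\bigr)$, where $\Psi$ is the endofunctor of $\DM^{\eff}(k,\Lambda)$ transporting the ``generic-fibre monad'' $\RR(\cdot)_{\eta*}\LL(\cdot)_\eta^*$. The core computation is that $\Psi$ is naturally equivalent to $\uhom\bigl(\Lambda_{\tr}(\G_{m,k}^n),-\bigr)$ — morally, passing from a good formal model to its generic fibre contributes precisely the tame cohomology of $\Spec K$, an exterior algebra on $\Lambda(1)^n$, i.e.\ a factor of $\Lambda_{\tr}(\G_{m,k}^n)$-cochains. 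Granting this, the adjunction $\bigl(-\otimes\Lambda_{\tr}(\G_{m,k}^n),\ \uhom(\Lambda_{\tr}(\G_{m,k}^n),-)\bigr)$ matches the two sides.

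The genuine obstacle is this computation of $\Psi$ — a motivic version of tame nearby cycles — together with the verification that $R$ really realises the resulting isomorphism, and not merely an abstract one: concretely, one must match the classes of $\pi_1,\ldots,\pi_n$ in $H^1(\G_{m,k}^n,\Lambda)$ with the tame monodromy operators acting on $\Lambda_{\tr}(X_K^{\an})$. It is exactly here that the hypothesis that the value group is free of rank $n$ over $\Z$ (pinning the tame cohomology down to $\bigwedge^\bullet\Lambda(1)^n$) and, in positive characteristic, the density of $k[\pi_1^{\pm1},\ldots,\pi_n^{\pm1}]$ in $K$ (ensuring that $\Rig^*$ genuinely identifies $X_K^{\an}$ with the generic fibre of $\widehat{X\times_k K^\circ}$, so that no good-reduction motives or morphisms between them are missed) are used. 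With monoidality observed at the start, this yields the monoidal triangulated equivalence in the effective case, and the stable equivalence follows from the first reduction.
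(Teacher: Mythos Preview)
The paper does not contain a proof of this theorem: it is quoted verbatim as \cite[Theorem 2.5.57]{ayoub-rig} and used as a black box, with no argument given. There is therefore nothing in the present paper to compare your proposal against.

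That said, your reconstruction is a plausible outline of Ayoub's original strategy. The reduction to the effective case, the identification of $R$ on generators $\Lambda_{\tr}(\G_{m,X}^n)\mapsto\Lambda_{\tr}(X_K^{\an})$ via the section $k\to K^\circ$ and \cite[Lemma 2.5.59]{ayoub-rig}, and the use of the special-fibre equivalence for essential surjectivity are all correct. The core of Ayoub's proof is indeed the computation of the monad $\chi\xi$ (your $\Psi$) on $\DM^{\eff}(k,\Lambda)$, and his method is to identify it with $\uhom(\Lambda_{\tr}(\G_{m,k}^n),-)$; this is carried out in \cite[\S2.5]{ayoub-rig} via an explicit cosimplicial resolution of the generic fibre (the ``motivic log'' construction), not by an abstract tame-cohomology argument. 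Your sketch correctly isolates this as the nontrivial step and correctly flags that one must check the \emph{specific} isomorphism induced by $R$ (tracking the classes of the $\pi_i$), which in Ayoub's treatment is the content of the compatibility between the cosimplicial resolution and the functor $(\pi_1,\ldots,\pi_n)^*$. The density hypothesis in positive characteristic enters exactly where you say, to ensure $Q^{\rig}$ agrees with $\Rig^*\circ(-\times_k K)$.
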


\begin{rmk}
We point out that the fact above has important applications also for arbitrary compact motives  (not necessarily of good reduction). Indeed, following Remark \ref{allpotgr} we deduce that for any pair of compact motives $M,M'$ we can calculate $\Hom_{\RigDM(K,\Lambda)}(M,M')$ in terms of the Galois-invariants in a Hom-group of $\DM(\G^n_{m,k'},\Lambda)$ for some finite Galois extension $K'/K$ (here $k'$ is the residue field of $K'$) see \cite[Remark 2.5.71]{ayoub-rig}.
\end{rmk}

\begin{rmk}\label{chi1}
We recall that we denote by $\chi$  the functor  $$\chi\colon \RigDM(K,\Lambda)\ra\DA(k,\Lambda)\cong\FormDA(K^\circ,\Lambda)$$ which is right adjoint to the functor $\LL(\cdot)_\eta\colon \FormDA(K^\circ,\Lambda)\ra\RigDM(K,\Lambda)$ induced by the generic fiber functor. The previous result allows to compute $\chi$ on motives of good reduction in an alternative way, as the following composition:
$$
\RigDM^{\gr}(K,\Lambda)\cong \Un\DM(k,\Lambda)\subset\DM(\G_{m,k},\Lambda)\stackrel{P_*}{\ra}\DM(k,\Lambda)
$$with $P\colon\G_{m,k}\ra\Spec k$ being the structure morphism. In particular $\chi\Lambda\cong \Lambda\oplus\Lambda(-1)[-2]$.
\end{rmk}

Ayoub conjectured that the  last two equivalences of the theorem above still hold true in the mixed characteristic case, under some technical hypotheses on $K$ (see Assumption \ref{assK}). 

Our strategy is simple, and we first sketch it  for the case of $\Q_p$ (the general case will be proved in Theorem \ref{ayoubgr+}). In this case, the conjecture takes the following form.
\begin{thm}
There are triangulated, monoidal equivalences of categories
\[
\UU\DM^{\eff}(\F_p,\Lambda)\cong \RigDM^{\eff,\gr}(\Q_p,\Lambda)\quad  \UU\DM(\F_p,\Lambda)\cong \RigDM^{\gr}(\Q_p,\Lambda)
\]
\end{thm}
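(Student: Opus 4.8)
The idea is to transport Ayoub's equal-characteristic theorem (Theorem \ref{ayoubgr}, case $n=1$) to $\Q_p$ through a perfectoid field. Fix $K\colonequals\widehat{\Q_p(p^{1/p^\infty})}$, a perfectoid field of mixed characteristic with residue field $\F_p$ and pseudo-uniformizer $\pi=p$, and its tilt $K^\flat=\widehat{\F_p(\!(t^{1/p^\infty})\!)}$, again with residue field $\F_p$. Base change along $\Q_p\hookrightarrow K$ and along $\F_p(\!(t)\!)\hookrightarrow K^\flat$, together with the motivic tilting equivalence, yield a chain
\[
\RigDM^{\eff,\gr}(\Q_p,\Lambda)\xrightarrow{\,b^*\,}\RigDM^{\eff,\gr}(K,\Lambda)\xrightarrow[\sim]{\,\mfG_{K,K^\flat}\,}\RigDM^{\eff,\gr}(K^\flat,\Lambda)\xleftarrow{\,c^*\,}\RigDM^{\eff,\gr}(\F_p(\!(t)\!),\Lambda),
\]
in which the middle arrow is an equivalence of good-reduction subcategories because, by Theorem \ref{main}, $\mfG_{K,K^\flat}$ carries each generator $\xi M$ to the generator $\xi^\flat M$ (the $\et$-versus-$\Frobet$ discrepancy in Theorem \ref{mottilteq} is harmless here, since over the perfect field $\F_p$ the two topologies produce the same rational good-reduction motives, see Remark \ref{DADMstable}). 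Once $b^*$ and $c^*$ are shown to be equivalences onto the good-reduction subcategories, the composite $(c^*)^{-1}\circ\mfG_{K,K^\flat}\circ b^*$ is a monoidal triangulated equivalence $\RigDM^{\eff,\gr}(\Q_p,\Lambda)\cong\RigDM^{\eff,\gr}(\F_p(\!(t)\!),\Lambda)$, and precomposing with the equivalence of Theorem \ref{ayoubgr} over $\F_p(\!(t)\!)$ gives the first displayed equivalence; the stable one follows in the same way, or by formally inverting the Tate twist, every functor involved commuting with $\Lambda(1)$.

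That $b^*$ (and similarly $c^*$) restricts to a functor between the good-reduction subcategories which is essentially surjective onto a set of generators is elementary: geometrically $(\mfX_\eta)\times_{\Q_p}K\cong(\mfX\times_{\Z_p}K^\circ)_\eta$, so $b^*$ commutes with $\LL(\cdot)_\eta^*$ up to the base-change functor $\FormDA^{\eff}(\Z_p,\Lambda)\to\FormDA^{\eff}(K^\circ,\Lambda)$; this last functor is compatible with the reduction equivalences to $\DA^{\eff}(\F_p,\Lambda)$ (Theorem \ref{dd} for $K^\circ$, and the analogous result of \cite{ayoub-rig} for $\Z_p$), hence is itself an equivalence (it leaves the special fibre unchanged), and therefore $b^*$ sends the generators $\LL a_{\tr}\LL(\cdot)_\eta^*\Lambda(\mfX)$ of $\RigDM^{\eff,\gr}(\Q_p,\Lambda)$ onto generators of $\RigDM^{\eff,\gr}(K,\Lambda)$. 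In particular $b^*$ intertwines the two copies of the functor $\xi$, so the whole chain is compatible with the functors $\xi$ and $\xi^\flat$ issued from $\DM^{\eff}(\F_p,\Lambda)$.

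It remains to prove that $b^*$ is \emph{fully faithful} on good-reduction motives, and this is where the real work lies. Since the $\xi M$ form a set of compact generators and $\chi=\LL(\cdot)_\sigma^*\RR(\cdot)_{\eta*}$ is right adjoint to $\xi$, full faithfulness of $b^*$ is equivalent, by the adjunctions over $\Q_p$ and over $K$ and a Yoneda argument, to the invertibility of the base-change comparison for $\chi$ — equivalently, since $\LL(\cdot)_\sigma^*$ is an equivalence compatible with base change, for $\RR(\cdot)_{\eta*}$ — on good-reduction motives. Now $K^\circ$ is the $p$-adic completion of $\varinjlim_m\Z_p[p^{1/p^m}]$, and the plan is to combine a \emph{continuity} statement — that $\RigDM^{\eff}$ and the functor $\RR(\cdot)_{\eta*}$ are compatible with the filtered colimit of the finite totally ramified extensions $\Q_p\subset\Q_p(p^{1/p^m})$ and with completion, in the spirit of the continuity results of \cite{ayoub-rig} and of the perfectoid continuity of \cite{vezz-fw} — with the observation that each individual step leaves $\RR(\cdot)_{\eta*}$ unchanged on good-reduction motives, which one reads off from the description of $\chi$ on $\RigDM^{\gr}$ as pushforward along $\G_{m}\to\Spec\F_p$ (Remark \ref{chi1}) together with the constancy of the residue field along the tower. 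I expect this continuity statement in \emph{mixed} characteristic to be the genuine obstacle: it is precisely what forces one to introduce the auxiliary semi-perfectoid categories of Section \ref{secsp}, whereas its equal-characteristic analogue is already at hand. Granting it, full faithfulness holds on a set of compact generators and hence — as $b^*$ commutes with small sums and those generators remain compact inside $\RigDM^{\eff,\gr}$ by Proposition \ref{cpt2} — on the whole subcategory; the identical argument treats $c^*$, completing the proof.
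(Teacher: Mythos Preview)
Your overall strategy—the square \eqref{diagQp} with a perfectoid field in the middle, the tilting equivalence on the bottom, and Ayoub's equal-characteristic theorem at the end—is exactly the paper's. The treatment of the bottom arrow (Proposition~\ref{tiltgr}, via Theorem~\ref{main}) and of the right vertical arrow $c^*$ (Proposition~\ref{K=Kperf}, via invertibility of relative Frobenius in $\RigDM$) is also in line with the paper. The genuine gap is in your argument for the \emph{left} vertical arrow $b^*$.

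First, you take $\widehat{K}=\widehat{\Q_p(p^{1/p^\infty})}$, which is \emph{not} the completion of a Galois extension of $\Q_p$. The paper deliberately uses a Galois completed perfection (here $\widehat{\Q_p(\mu_{p^\infty})}$; see Proposition~\ref{sugiu}), because the full-faithfulness proof for $b^*$ in Proposition~\ref{grgr+} goes through Galois descent: one writes $\Hom_{\Q_p}(-,\Lambda)$ as the $\Gal(\widehat{K}/\Q_p)$-invariants of $\Hom_{\widehat{K}}(-,\Lambda)$, and then shows that this Galois action is trivial. Triviality is checked \emph{after} transporting to $\widehat{K}^\flat$ via the already-established equivalences (tilting and Proposition~\ref{K=Kperf}), where Remark~\ref{chi1} is available and identifies $\chi\Lambda\cong\Lambda\oplus\Lambda(-1)[-2]$; since the residue extension is purely inseparable, the action is trivial. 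Your non-Galois tower does not give access to this descent step.

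Second, your substitute argument—continuity along $\Q_p\subset\Q_p(p^{1/p^m})$ together with ``each individual step leaves $\RR(\cdot)_{\eta*}$ unchanged, by Remark~\ref{chi1}''—is circular. Remark~\ref{chi1} computes $\chi$ via $P_*$ only \emph{as a consequence of} Theorem~\ref{ayoubgr}, which is an equal-characteristic statement; invoking it for the mixed-characteristic fields $\Q_p(p^{1/p^m})$ presupposes exactly the kind of equivalence you are trying to prove. The paper avoids this circularity by first establishing the other three arrows of the square, which makes Remark~\ref{chi1} available \emph{at the perfectoid level} $\widehat{K}$, and then using Galois descent to get down to $\Q_p$—not by descending $\chi$ through the tower. (Your aside that the continuity in mixed characteristic ``is precisely what forces one to introduce the auxiliary semi-perfectoid categories of Section~\ref{secsp}'' is a misattribution: those categories are used to prove Theorem~\ref{main}, which you are already taking for granted; they play no role in Proposition~\ref{grgr+}.)

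In short: keep the square, but switch to a Galois perfectoid extension and replace the circular continuity-plus-$\chi$ argument for $b^*$ by Galois descent as in Proposition~\ref{grgr+}.
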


Let $\widehat{K}$ be the completion of the field $\Q_p(\mu_{p^\infty})$. It is a perfectoid field with tilt $\widehat{K}^\flat$ isomorphic to the completion of $\F_p(\!(t)\!)(t^{1/p^\infty})$. This field is the perfection of $K^\flat\colonequals \F_p(\!(t)\!)$ which is a field that satisfies the assumptions of Theorem \ref{ayoubgr} and has the same residue field $\F_p$ as $\Q_p$. The analogue of the theorem above for $K=\Q_p$ will then be reached via the following diagram of equivalences:%
\begin{equation}\label{diagQp}
\xymatrix{
	\RigDM^{\gr}(K,\Lambda)\ar[d]_{\sim} & \RigDM^{\gr}(K^\flat,\Lambda)\ar[d]^{\sim}\\
		\RigDM^{\gr}(\widehat{K},\Lambda)\ar@{<->}[r]^{\sim} & \RigDM^{\gr}(\widehat{K}^\flat,\Lambda)\\	
	}
	\end{equation}

We now introduce Ayoub's conjecture for a general field $K$ of mixed characteristic (not just $\Q_p$). We recall that in the case of equal characteristic $p$  we assumed that $k[\pi_1^{\pm1},\ldots,\pi_n^{\pm1}]$ is dense in $K$. Similarly, in the case of a mixed characteristic field we  need to consider some special assumptions on the valuation, that we list below:

\begin{assu}\label{assK}
	From now on, we fix a complete, non-archimedean field $K$ over $\Q_p$  with the following extra properties:
	\begin{enumerate}[(i)]
		\item There are $n$ elements $\pi_1,\ldots,\pi_n$ of $K^\circ$ with $\pi_1$ algebraic over $\Q_p$ such that the group $|K^\times|$ is free of rank $n$ generated by the valuations of $\pi_i$.
		\item There is a complete discrete valued subfield $K_0$ having $\pi_1$ as uniformizer and the same residue field $k$ as $K$.
		\item The ring $K_0[\pi_1^{\pm1},\ldots,\pi_n^{\pm1}]$ is dense in $K$.
	\end{enumerate}
\end{assu}

\begin{rmk}
	Any finite field extension of $\Q_p$ obviously satisfies Assumption \ref{assK}.%
\end{rmk}

In the case of $\Q_p$ we introduced the intermediate, auxiliary fields $\Q_p(\mu_{p^\infty})$ and $\F_p(\!(t)\!)$. We now try to generalize this situation. 
\begin{dfn}If $\car K=0$ we say $\widehat{K}$ is a [Galois] \emph{completed perfection} of $K$ if it is a perfectoid field obtained as the completion of a [Galois] algebraic extension  of $K$ inducing purely inseparable extensions of residue fields. %
\end{dfn}

\begin{prop}\label{sugiu}
	Let $K$ satisfy Assumption \ref{assK}. There exists a Galois completed perfection $\widehat{K}$ of $K$ and a complete valued field  $K^\flat$ of positive characteristic with the following properties:
	\begin{enumerate}[(i)]
		\item  $\widehat{K}$ has a value group which is a free module over $\Z[1/p]$ of rank $n$.
		\item  $\widehat{K}^\flat$ is the completed perfection of  $K^\flat$.
		\item $K^\flat$ has a free value group of rank $n$ over $\Z$ and residue field $k$.%
		\item  We can choose  some elements $\pi_1^\flat,\ldots,\pi_n^\flat$ in $K^\flat$ such that their values $|\pi_i^\flat|$ generate the value group, and such that  %
		the ring $k[(\pi_i^{\flat})^{\pm1}]$ is dense in $K^\flat$.
	\end{enumerate}
\end{prop}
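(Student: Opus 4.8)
The plan is to construct $\widehat K$ and $K^\flat$ explicitly, generalizing the auxiliary fields $\Q_p(\mu_{p^\infty})$ and $\F_p(\!(t)\!)$ of the sketch for $K=\Q_p$, and then to verify the four properties, of which only (ii) is delicate. \emph{Construction of $\widehat K$ and property (i).} By Assumption \ref{assK} we have $\pi_1\in K_0$, and $K$ is the completion of $K_0[\pi_2^{\pm1},\dots,\pi_n^{\pm1}]$. Fix a $p$-basis $\{t_i\}_{i\in I}$ of $k$ and lifts $\tilde t_i\in (K_0^\circ)^\times$. Let $L/K$ be the algebraic extension generated by all $p$-power roots of unity, by $\tilde t_i^{1/p^\infty}$ for $i\in I$, and by $\pi_j^{1/p^\infty}$ for $2\le j\le n$; put $\widehat K:=\widehat L$. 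Since $L$ is the splitting field over $K$ of the cyclotomic polynomials together with the $X^{p^m}-\tilde t_i$ and the $X^{p^m}-\pi_j$ (once the $p$-power roots of unity are present each of these polynomials splits over $L$), the extension $L/K$ is Galois, and its residue extension, being $k^{1/p^\infty}/k$, is purely inseparable. The value group of $\widehat K$ is $\sum_i\Z[1/p]\,v(\pi_i)$, which is free of rank $n$ over $\Z[1/p]$ since the $v(\pi_i)$ stay independent after inverting $p$; it is non-discrete and $p$-divisible and the residue field $k^{1/p^\infty}$ is perfect, so $\widehat K$ is a perfectoid field. Hence $\widehat K$ is a Galois completed perfection of $K$ with the value group required in (i).

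\emph{Construction of $K^\flat$ and properties (iii), (iv).} Let $\widehat K^\flat$ be the tilt of $\widehat K$; it is a perfectoid field of characteristic $p$ with residue field $k^{1/p^\infty}$ and value group free of rank $n$ over $\Z[1/p]$. For $2\le j\le n$ the compatible system $(\pi_j^{1/p^m})_m$ defines an element $\pi_j^\flat\in\widehat K^\flat$ with $v(\pi_j^\flat)=v(\pi_j)$; fix also a pseudo-uniformizer $\varpi_1\in\widehat{K_0}$ admitting a compatible system of $p$-power roots and put $\pi_1^\flat:=\varpi_1^\flat$, chosen so that $v(\pi_1^\flat),\dots,v(\pi_n^\flat)$ generate a free subgroup of rank $n$ whose $\Z[1/p]$-span is the whole value group. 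As $\widehat K^\flat$ is perfect and complete it contains its residue field through the canonical section, hence contains $k$. Define $K^\flat$ to be the closure of $k[(\pi_1^\flat)^{\pm1},\dots,(\pi_n^\flat)^{\pm1}]$ in $\widehat K^\flat$. Then $K^\flat$ is a complete valued field of characteristic $p$; its value group is the free rank-$n$ group $\bigoplus_i\Z\,v(\pi_i^\flat)$, its residue field is $k$ (an element of nonnegative valuation reduces to its constant term), and $k[(\pi_i^\flat)^{\pm1}]$ is dense in it by construction. This proves (iii) and (iv).

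\emph{Property (ii), the main obstacle.} It remains to identify $\widehat K^\flat$ with the completed perfection of $K^\flat$. One inclusion is clear; for the converse one computes the tilt of $\widehat K$ directly. Tilting is compatible with adjoining a compatible system of $p$-power roots of a pseudo-uniformizer, with adjoining $p$-power roots of a unit, and with completion, so the computation reduces to identifying the tilt of the arithmetic core $\widehat{K_0}$: one must show that $(\widehat{K_0})^\flat$ is the completed perfection of $k(\!(\pi_1^\flat)\!)$. This is the residue-imperfect refinement of Scholze's computation that $\widehat{\Q_p(\mu_{p^\infty})}$ tilts to the completion of $\F_p(\!(t^{1/p^\infty})\!)$: the cyclotomic tower contributes the pseudo-uniformizer direction, while the roots $\tilde t_i^{1/p^\infty}$ contribute exactly the purely inseparable growth of the residue field, the two contributions being essentially orthogonal. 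Granting this, $\widehat K^\flat$ is the completion of $k^{1/p^\infty}[(\pi_i^\flat)^{\pm1/p^\infty}:1\le i\le n]$, which is precisely the completed perfection of $K^\flat$. The bookkeeping in this tilt computation --- simultaneously tracking the wild ramification coming from $\mu_{p^\infty}$ and the inseparable residue extension, and checking that no extra immediate extension is introduced --- is where the real work lies.
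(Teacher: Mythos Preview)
Your construction of $\widehat K$ and $K^\flat$ is essentially the same as the paper's (adjoin $p$-power roots of lifts of residue field elements, then $\mu_{p^\infty}$, then $\pi_j^{1/p^\infty}$ for $j\geq 2$; take $K^\flat$ as the closure of $k[(\pi_i^\flat)^{\pm1}]$ in $\widehat K^\flat$), and your verifications of (i), (iii), (iv) are fine.

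The gap is exactly where you say it is: property (ii). You reduce to computing the tilt of the ``arithmetic core'' $\widehat{K_0}$ and then write ``Granting this\ldots'' and ``the bookkeeping\ldots is where the real work lies'' --- i.e.\ you do not actually prove it. Moreover, the step you invoke to reduce to that core (``tilting is compatible with adjoining a compatible system of $p$-power roots of a pseudo-uniformizer / of a unit'') is itself a nontrivial assertion about how tilting interacts with infinite totally ramified / totally inseparable towers, and you do not justify it either.

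The paper bypasses both issues with a short density argument. One does not try to compute $\widehat K^\flat$ directly; instead, since the completed perfection of $K^\flat$ obviously embeds in $\widehat K^\flat$, it suffices to show that $k^{\Perf}[(\pi_i^\flat)^{\pm 1/p^\infty}]^\circ$ is dense in $\widehat K^{\flat\circ}$. This is checked modulo $\pi_1^\flat$ (and then inductively modulo $(\pi_1^\flat)^n$), using the basic tilting isomorphism $\widehat K^{\flat\circ}/\pi_1^\flat \cong \widehat K^\circ/\widetilde\pi_1$ together with the density of $L_0[\pi_i^{\pm 1/p^\infty}]_{i\geq 2}$ in $\widehat K$, which holds by construction. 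The identification of the tilt of $L_0=F_0(\mu_{p^\infty})^\wedge$ (your ``$\widehat{K_0}$'') with the completed perfection of $k^{\Perf}(\!(t)\!)$ is still used, but only in the classical CDVF-with-perfect-residue-field form; the paper simply states it as known. So the piece you flagged as hard is in fact the standard Scholze computation over a perfect residue field, and the genuinely new work --- passing from $L_0$ to $\widehat K$ --- is handled by the mod-$\pi_1^\flat$ density check rather than by any compatibility principle for tilting.
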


\begin{proof}	
		Choose some representatives of a finite subset of  $k^\times=(K^\circ_0/\pi_1)^\times$ in $K_0$ and some $p$-th roots of them in a complete algebraic closure $C$ of $K$. The extensions of $K$ and $K_0$ generated by these elements satisfy Assumption \ref{assK} with respect to the same elements $\pi_i$ and the same value group, and they induce a finite purely inseparable map on the residue fields. %
	With a direct limit process, we obtain a pair $(F,F_0)$  which is algebraic over $(K,K_0)$ and whose completion satisfies Assumption \ref{assK} with the same value group of $K$ and  residue field isomorphic to $k^{\Perf}$.  %
	
	We now take $L$ to be the completion of $F(\mu_{p^\infty})$. It is the completion of a Galois extension over $K$ and its subfield $L_0\colonequals F_0(\mu_{p^\infty})^\wedge$ is perfectoid, with tilt isomorphic to $k^{\perf}(\!(t)\!)(t^{1/p^\infty})^\wedge$ and value group isomorphic to $\Z[1/p]$ generated by the valuation of $\widetilde{\pi}_1\colonequals t^\sharp$ (for the definition of the multiplicative map $x\mapsto x^\sharp$, see \cite[Lemma 3.4]{scholze}). %
	
	 We finally consider $\widehat{K}$ to be the completion of $L(\pi_2^{1/p^\infty},\ldots,\pi_n^{1/p^\infty})$ for a choice of compatible $p$-th roots of each $\pi_i$ with $i>1$. Its value group is free over $\Z[1/p]$ generated by the values of $\widetilde{\pi}_1$ and the $\pi_i$'s with $i>1$. In order to prove that it is perfectoid, it suffices to show that the Frobenius is surjective on the ring $\widehat{K}^{\circ}/\pi_1$  which by density coincides with $L_0[\pi_i^{\pm1/p^\infty}]^\circ/\widetilde{\pi}_1$. This follows from the fact that  $L_0^\circ/\widetilde{\pi}_1$ is perfect, and  any element of the ring $L_0[\pi_i^{\pm1/p^\infty}]^\circ$  is a sum of terms of the form $u\widetilde{\pi}_1^{q_1}\pi_2^{q_2}\cdot\ldots\cdot \pi_n^{q_n}$ with $u\in L_0^{\circ\times}$ and $q_i\in\Z[1/p]$. Also the field $\widehat{K}$ is the completion of a Galois extension over $K$.%
	
	We now put $K^\flat$ to be the completion of the subfield  $k(\pi_1^\flat,\ldots,\pi_n^\flat)$ of $\widehat{K}^{\flat}$ where $\pi_1^\flat=t$ and the other elements $\pi_i^\flat$ are chosen to satisfy $|\pi_i^{\flat\sharp}|=|\pi_i|$. In particular, the values $|\pi_i^\flat|$ generate the value group of $K^\flat$ which is a free $\Z$-module of rank $n$ and the residue field of $K^\flat$ is $k$. %
	
	In order to see that the completed perfection of $K^\flat$ coincides with $\widehat{K}^\flat$ it suffices to show that $k^{\Perf}[(\pi_i^{\flat})^{\pm1/p^\infty}]^\circ$ is dense in $\widehat{K}^{\flat\circ}$ i.e. that for any $n$ and any $\lambda\in \widehat{K}^{\flat\circ}$ there exists an element $\xi\in k^{\Perf}[(\pi_i^{\flat})^{\pm1/p^\infty}]^\circ$ such that $\lambda-\xi\in\pi_1^{\flat n}\widehat{K}^{\flat\circ}$. By induction on $n$ it suffices to prove this statement for $n=1$. %
	We obtain the following isomorphisms:
	\[
	k^{\Perf}[(\pi_i^{\flat})^{\pm1/p^\infty}]_{i\geq1}^{\circ}/\pi_1^\flat\cong L_0[\pi_i^{\pm1/p^\infty}]_{i\geq2}^{\circ}/\widetilde{\pi}_1\cong \widehat{K}^\circ/\widetilde{\pi}_1\cong \widehat{K}^{\flat\circ}/\pi_1^\flat
	\]
	proving our claim.
\end{proof}

We start by proving that the tilting equivalence respects motives of good reduction: that is, we prove the bottom horizontal equivalence of the diagram \eqref{diagQp}. In order to do this, we use the results of the previous sections.%

\begin{prop}\label{tiltgr}
Let $\widehat{K}$ be a perfectoid field. %
	The motivic tilting equivalence of Theorem \ref{mottilteq} restricts to  equivalences \[\RigDM^{\eff,\gr}(\widehat{K},\Lambda)\cong\RigDM^{\eff,\gr}(\widehat{K}^\flat,\Lambda)\quad \RigDM^{\gr}(\widehat{K},\Lambda)\cong\RigDM^{\gr}(\widehat{K}^\flat,\Lambda)\]
\end{prop}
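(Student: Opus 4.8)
The plan is to exhibit the good-reduction subcategory on each side as the localizing subcategory (triangulated, closed under small sums) generated by the essential image of the functor $\xi$ of Theorem \ref{main}, and then to transport this description across the tilting equivalence using the commutativity established there. If $\widehat K$ has characteristic $p$ it coincides with its own tilt and there is nothing to prove, so I would assume $\widehat K$ has mixed characteristic and set $K=\widehat K$, $K^\flat=\widehat K^\flat$ throughout, so that Theorem \ref{main} is available.

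First I would check that $\RigDM^{\eff,\gr}(\widehat K,\Lambda)$ is exactly the localizing subcategory of $\RigDM^{\eff}(\widehat K,\Lambda)$ generated by the essential image of $\xi\colon\DM^{\eff}(k,\Lambda)\to\RigDM^{\eff}(\widehat K,\Lambda)$. By the remark following the definition of motives of good reduction it is generated by the objects $\LL a_{\tr}\LL(\cdot)_\eta^* M$ with $M\in\FormDA^{\eff}(\widehat K^\circ,\Lambda)$; by Theorem \ref{dd} the functor $\RR(\cdot)_{\sigma*}$ identifies $\DA^{\eff}(k,\Lambda)$ with $\FormDA^{\eff}(\widehat K^\circ,\Lambda)$, so these are the objects $\LL a_{\tr}\LL(\cdot)_\eta^*\RR(\cdot)_{\sigma*}N$, which — via the compatibility of adding transfers with the geometric functors recorded in Remark \ref{chi} and Theorem \ref{DADM} — are the images under $\xi$ of the (representable) generators of $\DM^{\eff}(k,\Lambda)$. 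Since $\xi$ is triangulated and preserves small sums (it is the composite of the left adjoint $\LL(\cdot)_\eta^*$ with the equivalence $\RR(\cdot)_{\sigma*}$), this yields the description, and the same argument identifies $\RigDM^{\eff,\gr}(\widehat K^\flat,\Lambda)$ with the localizing subcategory generated by the essential image of $\xi^\flat$. If one prefers a concrete statement, Proposition \ref{chiflat} shows that $\xi^\flat$ sends a generator $\Lambda_{\tr}(\bar X)$ to $\Lambda_{\tr}(Q^{\rig}(\bar X))$, the motive of the generic fibre of a smooth affine formal $\widehat K^{\flat\circ}$-scheme, which manifestly lies in $\RigDM^{\eff,\gr}(\widehat K^\flat,\Lambda)$.

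Then I would invoke Theorem \ref{main}: there is an invertible natural transformation $\mfG_{\widehat K,\widehat K^\flat}\circ\xi\cong\xi^\flat$, and correspondingly $\mfG_{\widehat K^\flat,\widehat K}\circ\xi^\flat\cong\xi$ for the quasi-inverse. Consequently $\mfG_{\widehat K,\widehat K^\flat}$ carries a generating set of $\RigDM^{\eff,\gr}(\widehat K,\Lambda)$ into $\RigDM^{\eff,\gr}(\widehat K^\flat,\Lambda)$, and being a triangulated equivalence it preserves all small sums; hence it carries the whole localizing subcategory generated by that set into $\RigDM^{\eff,\gr}(\widehat K^\flat,\Lambda)$, while symmetrically its quasi-inverse carries $\RigDM^{\eff,\gr}(\widehat K^\flat,\Lambda)$ back into $\RigDM^{\eff,\gr}(\widehat K,\Lambda)$. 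These two restricted functors are mutually quasi-inverse, which gives the effective equivalence. For the stable statement I would repeat the argument with the stable tilting equivalence $\mfG'$ (the corollary following Theorem \ref{main}), which preserves Tate twists, together with the fact that $\RigDM^{\gr}(\widehat K,\Lambda)$ is by definition the localizing subcategory generated by the Tate twists of the effective good-reduction generators.

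The whole argument is formal once Theorem \ref{main} is in hand; the only delicate points are bookkeeping — namely that moving between motives with and without transfers, and between the $\et$- and $\Frobet$-topologies on the characteristic $p$ side, does not disturb the generating sets (guaranteed by Theorem \ref{DADM} and $\Q\subset\Lambda$), and that the representable objects already generate $\FormDA^{\eff}(\widehat K^\circ,\Lambda)$ so that the $\xi$-image genuinely generates the whole good-reduction category. I do not anticipate any substantive obstacle beyond this.
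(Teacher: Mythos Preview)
Your proposal is correct and follows essentially the same approach as the paper: both arguments reduce to showing that the tilting equivalence carries the good-reduction generators $\LL(\cdot)_\eta^* M$ (equivalently, the essential image of $\xi$) to objects of the same form on the other side, and both deduce this from the commutative triangle of Theorem \ref{main}. The paper's proof is simply a compressed version of what you wrote, invoking the diagram directly without spelling out the bookkeeping about localizing subcategories, transfers, or the stable case.
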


\begin{proof}
	It suffices to show that motives of the form $\LL(\cdot)_\eta M$ are sent by $\mfG$ to motives of the same form, in the two directions. 
	
	This follows from the following commutative diagram (up to a natural transformation) which appears in Theorem \ref{main}.
	\[
	\xymatrix{
	\DA^{\eff}(k,\Lambda)\ar@{=}[d]&\FormDA^{\eff}(K^\circ,\Lambda)\ar[l]_-{\sim}\ar[r]^-{\LL (\cdot)_\eta}&\RigDA^{\eff}(K,\Lambda)\ar@<-6ex>[d]^{\sim}\cong\RigDM^{\eff}(K,\Lambda)\\
		\DA^{\eff}(k,\Lambda)&\FormDA^{\eff}(K^{\flat\circ},\Lambda)\ar[l]_-{\sim}\ar[r]^-{\LL (\cdot)_\eta}&\RigDA^{\eff}_{\Frobet}(K^\flat,\Lambda)\cong\RigDM^{\eff}(K^\flat,\Lambda)\\
	}
	\]
\end{proof}

We now show how to obtain the right vertical equivalence of the diagram \eqref{diagQp}.

\begin{prop}\label{K=Kperf}
	Suppose $\car K=p$ and let $\widehat{K}$ be its completed perfection. The base-change functor $(\widehat{K}/K)^*$ induces triangulated, monoidal equivalences:  \[\RigDM^{\eff}(K,\Lambda)\cong\RigDM^{\eff}(\hat{K},\Lambda)\qquad \RigDM(K,\Lambda)\cong\RigDM(\widehat{K},\Lambda).\] These equivalences restrict to the subcategories of motives of good reduction.
\end{prop}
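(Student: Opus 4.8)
The plan is to factor the base-change functor $(\widehat{K}/K)^*$ through a \emph{continuity} equivalence together with the invertibility of the \emph{relative Frobenius} for $\Q$-linear motives. I would first reduce to the effective case: since $(\widehat{K}/K)^*$ commutes with the Tate twist and sends standard generators to standard generators, the statement for $\RigDM(K,\Lambda)$ follows from the one for $\RigDM^{\eff}(K,\Lambda)$ in the same way the stable refinement of Theorem \ref{main} follows from its effective version. Set $K_h\colonequals K^{1/p^h}\subset K^{\Perf}$, so that $\widehat{K}=(\varinjlim_h K_h)^{\wedge}$ and $\widehat{K}^\circ$ is the $\pi$-adic completion of $\varinjlim_h K_h^\circ$.

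The key input is the continuity property of rigid analytic motives along this tower: a smooth affinoid over $\widehat{K}$ (resp. a smooth affine formal scheme of finite type over $\widehat{K}^\circ$) together with an \'etale chart over a torus is finitely presented, hence descends to some $K_h$, and the $\widehat{\B}^1$-localisation is compatible with filtered colimits of sites; it is enough to check this for the variant without transfers and then transport it. One thus obtains canonical equivalences
\[
\RigDM^{\eff}(\widehat{K},\Lambda)\;\cong\;\colim_h \RigDM^{\eff}(K_h,\Lambda),\qquad
\FormDA^{\eff}(\widehat{K}^\circ,\Lambda)\;\cong\;\colim_h \FormDA^{\eff}(K_h^\circ,\Lambda),
\]
with transition functors the respective base-change functors, compatibly with $\LL(\cdot)_\eta$. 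Now raising coordinates to the $p^h$-th power gives an isomorphism of valued fields $K_h\xrightarrow{\sim}K$ under which $K_h\hookrightarrow K_{h+1}$ becomes the absolute Frobenius of $K$; hence each transition functor is identified with $\Frob^*\colon\RigDM^{\eff}(K,\Lambda)\to\RigDM^{\eff}(K,\Lambda)$, $\Lambda_{\tr}(X)\mapsto\Lambda_{\tr}(X\times_{K,\Frob}K)$.

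Since $\car K=p$ and $\Q\subset\Lambda$, this functor is an equivalence, canonically isomorphic to the identity: for a smooth rigid variety $X$, the graph of the relative Frobenius $\mfF_X\colon X\to X^{(p)}$ and its transpose give maps $\Lambda_{\tr}(X)\leftrightarrows\Lambda_{\tr}(X^{(p)})=\Frob^*\Lambda_{\tr}(X)$ whose composites are multiplication by $p^{\dim X}$, invertible in $\Lambda$ — this is the rigid analogue of the Frobenius-invertibility underlying Theorem \ref{DADM} (see \cite{vezz-DADM}), and it assembles into a natural isomorphism $\id\Rightarrow\Frob^*$. Therefore the colimits above collapse to $\RigDM^{\eff}(K,\Lambda)$ and $\FormDA^{\eff}(K^\circ,\Lambda)$ respectively, and the resulting equivalence $\RigDM^{\eff}(K,\Lambda)\cong\RigDM^{\eff}(\widehat{K},\Lambda)$ is exactly $(\widehat{K}/K)^*$. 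Monoidality follows from $(\widehat{K}/K)^*\Lambda_{\tr}(X)=\Lambda_{\tr}(X_{\widehat{K}})$ and $(X\times_K Y)_{\widehat{K}}=X_{\widehat{K}}\times_{\widehat{K}}Y_{\widehat{K}}$, and triangulatedness is automatic. Finally the equivalence restricts to good reduction: it carries the generator $\Lambda_{\tr}(\mfX_\eta)$ ($\mfX$ smooth affine formal over $K^\circ$) to $\Lambda_{\tr}((\mfX_{\widehat{K}^\circ})_\eta)$, again such a generator over $\widehat{K}^\circ$, and conversely, by the formal-scheme continuity above every good-reduction generator over $\widehat{K}^\circ$ descends — up to a harmless Frobenius twist of the base — to one over $K^\circ$. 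Hence $(\widehat{K}/K)^*$ induces $\RigDM^{\eff,\gr}(K,\Lambda)\cong\RigDM^{\eff,\gr}(\widehat{K},\Lambda)$, and likewise after stabilising.

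The step I expect to be the main obstacle is precisely the continuity equivalence: one must verify that rigid motives, and formal motives over the valuation ring, are genuinely continuous along this tower of \emph{completed} purely inseparable extensions — that \'etale charts over (perfectoid, resp. formal) tori spread out to a finite level, and that passing to the $\pi$-adic completion of $\varinjlim_h K_h^\circ$, which is non-Noetherian, does not interfere with the $\widehat{\B}^1$-localisation. Once transfers and $\Q$-coefficients are in place, the invertibility of the relative Frobenius and the bookkeeping with Frobenius twists are essentially formal.
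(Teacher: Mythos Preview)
Your proposal is correct and follows essentially the same route as the paper: write $\widehat{K}$ as the completed colimit of the purely inseparable tower $K_h=K^{1/p^h}$, invoke a continuity equivalence (this is exactly the content of Lemmas~\ref{2lim} and~\ref{2lim2}, which address the obstacle you flag), and then use that each transition functor is an equivalence because the relative Frobenius becomes invertible as a correspondence with $\Q$-coefficients (the paper packages this step as the citation of \cite[Proposition~2.2.22]{ayoub-rig}). The only cosmetic difference is that for the good-reduction statement you appeal to continuity of \emph{formal} motives over $K_h^\circ$, whereas the paper stays on the generic fiber and argues directly that $\Frob^*$ is essentially surjective on good-reduction generators via the invertibility of $\Lambda_{\tr}(\mfX_\eta)\to\Lambda_{\tr}((\mfX\times_{\Frob}K'^\circ)_\eta)$; both arguments are equivalent once one uses $\FormDA^{\eff}(K_h^\circ)\cong\DA^{\eff}(k_h)$.
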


\begin{proof}
	Write $\widehat{K}$ as a completion of the direct limit of some finite, purely inseparable extensions of $K$. It suffices to show the equivalence for compact motives, which follows from Lemmas \ref{2lim} and \ref{2lim2} and the remark that any finite, purely inseparable extension $K'/K$ induces an equivalence $\RigDM^{\eff}(K',\Lambda)\cong\RigDM^{\eff}({K},\Lambda)$ (see \cite[Proposition 2.2.22]{ayoub-rig}). The fact that these equivalences  preserve motives of good reduction follows from the proof of \cite[Propositions 2.2.22 and 2.2.37]{ayoub-rig}: it suffices to prove that the pullback along Frobenius $\Frob\colon K'\ra K'$ induces an essentially surjective functor on compact motives of good reduction, and this follows from the fact that for any smooth formal scheme $\mfX$ over $K'^\circ$, the relative Frobenius map $\mfX_\eta\ra(\mfX{\times}_{\Frob}K'^\circ)_\eta$ is invertible as a correspondence with $\Q$-coefficients.
\end{proof}

\begin{rmk}
	We point out that Proposition \ref{K=Kperf} is the motivic version of the well known isomorphism between the absolute Galois group of a  field of characteristic $p$ and the one of its completed perfection.
\end{rmk}

The lemmas below were used in the previous proof. %

\begin{lemma}\label{2lim}If $\car K=p$ we let $\hat{K}$ be its completed perfection obtained by completing the direct limit of Frobenius maps $K_i\subset K_{i+1}$. If $\car K=0$ we fix a completion $C$ of some algebraic closure of $K$ and we 
	let $\widehat{K}\subset C$ be the completion of the union $\varinjlim K_h$ of a small directed system of  inclusions $K_h\subset K_{h'}$ of complete subfields. 
	
	We obtain the following canonical triangulated, monoidal equivalences:
	\[
	\RigDM^{\eff}(\widehat{K},\Lambda)\cong \RigDM^{\eff}(2-\varinjlim_i\RigSm/{K}_i,\Lambda)
	\]
	\[
	\RigDM(\widehat{K},\Lambda)\cong \RigDM(2-\varinjlim_i\RigSm/{K}_i,\Lambda)
	\]
	where the categories on the right are constructed by $(\Frobet,\B^1)$-localization out of the \'etale site  on the limit category defined by the base change functors  $\RigSm/K_i\ra \RigSm/K_{i+1}$. %
\end{lemma}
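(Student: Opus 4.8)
The plan is to prove this \emph{continuity} statement — that the formation of $\RigDM$ commutes with the passage to the completed colimit $\widehat K=(\varinjlim_i K_i)^\wedge$ — at the level of sites, and then transport the comparison through the presentation of motives as Bousfield localizations recalled in Remarks \ref{modcatmot} and \ref{modcatmotDM}. First I would note that base change along the transition maps $K_i\subset K_{i+1}$ (which in characteristic $p$ are the Frobenius inclusions) gives functors $\RigSm/K_i\ra\RigSm/K_{i+1}$ preserving the étale topology, the relative Frobenius morphisms, the interval $\B^1$ and finite products, hence a functor $\Phi\colon 2\text{-}\varinjlim_i\RigSm/K_i\ra\RigSm/\widehat K$, $X_i\mapsto X_i\times_{K_i}\widehat K$, compatibly extending to the associated correspondence categories. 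The goal then becomes to show that $\Phi$ induces an equivalence between the $(\Frobet,\B^1)$-localized derived categories of $\Lambda$-linear sheaves. Since both sides are compactly generated by the motives $\Lambda_{\tr}(X)$ of quasi-compact (equivalently, smooth affinoid) objects, and since the Bousfield localization is ``finitary'' — so that $\Hom$-groups in the localized category between such generators are computed by resolutions involving only finitely many geometric data, hence are filtered colimits of their analogues at the finite stages — it suffices to check that $\Phi$ (i) is essentially surjective onto quasi-compact objects, (ii) induces isomorphisms $\varinjlim_i\Hom_{\RigSm/K_i}(X_i,Y_i)\cong\Hom_{\RigSm/\widehat K}(\Phi X_i,\Phi Y_i)$ between them, and (iii) is compatible with étale covers and relative Frobenius morphisms. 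Note that the $\Frobet$-localization is what reconciles the two sides: although the transition functors themselves do not invert relative Frobenius, the localization inverts it on both sides.

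The geometric content of (i)--(iii) is an \emph{approximation/spreading-out} argument. A smooth affinoid variety over $\widehat K$ is $\Spa B$ with $B=\widehat K\langle x_1,\dots,x_m\rangle/(f_1,\dots,f_r)$; since $\varinjlim_i K_i$ is dense in $\widehat K$ one may choose the finitely many coefficients of the $f_j$ in $\varinjlim_i K_i$, hence in some $K_i$, so that $B\cong B_i\widehat\otimes_{K_i}\widehat K$ with $B_i=K_i\langle x\rangle/(f_1,\dots,f_r)$, and smoothness of $B_i$ over $K_i$ follows from that of $B$ over $\widehat K$ by faithfully flat descent along $K_i\subset\widehat K$ (after enlarging $i$ if necessary); thus $\Spa B\cong\Phi(\Spa B_i)$. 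The same density argument descends morphisms — a map of affinoid $\widehat K$-algebras is prescribed by the finitely many images of the generators, which may be taken at a finite stage where the defining relations are also satisfied — and shows that two morphisms agreeing after $\otimes_{K_i}\widehat K$ already agree over some $K_j$; likewise for étale covers and, in characteristic $p$, for relative Frobenius morphisms. In the characteristic-$0$ case $\widehat K$ is the completion of a union of \emph{complete} subfields rather than of a plain algebraic colimit, and it is precisely here that completeness of the $K_h$ enters: topological finite presentation of affinoid algebras lets one run the same spreading-out with respect to the dense subfield $\varinjlim_h K_h\subset\widehat K$. These are standard manipulations in the spirit of \cite[\S1.1, \S2.2]{ayoub-rig}.

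Granting (i)--(iii), $\Phi$ is an equivalence of $\Frobet$-sites endowed with the interval $\B^1$, so by the functoriality of the construction of motives (Remark \ref{modcatmot}) it induces the asserted triangulated equivalence $\RigDM^{\eff}(\widehat K,\Lambda)\cong\RigDM^{\eff}(2\text{-}\varinjlim_i\RigSm/K_i,\Lambda)$; monoidality is automatic, since $\Phi$ commutes with products and $\Lambda_{\tr}(X)\otimes\Lambda_{\tr}(Y)\cong\Lambda_{\tr}(X\times Y)$. The non-effective statement should then reduce to the effective one, since inverting the Tate twist $\Lambda(1)$ commutes with the $2$-colimit and the same compact-generation argument applies to the stabilized categories. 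I expect the main obstacle to be the approximation step — making precise that every quasi-compact smooth rigid variety, and every morphism between two such, over the completed colimit $\widehat K$ is already defined over a finite stage $K_i$ with smoothness preserved — together with the routine but bookkeeping-heavy verification that this descent is compatible with the $\Frobet$-topology; once these are in place, the motivic conclusion is formal.
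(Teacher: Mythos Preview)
Your overall architecture is reasonable, but there is a genuine gap at step (ii): the assertion
\[
\varinjlim_i\Hom_{\RigSm/K_i}(X_i,Y_i)\;\cong\;\Hom_{\RigSm/\widehat K}(\Phi X_i,\Phi Y_i)
\]
is \emph{false} in general, so $\Phi$ is not an equivalence of sites. A morphism of affinoid $\widehat K$-algebras is determined by the images of finitely many generators, but each such image is an element of a \emph{completed} tensor product $\mcO(X_i)\widehat{\otimes}_{K_i}\widehat K$, i.e.\ a convergent power series with infinitely many coefficients in $\widehat K$; there is no reason these coefficients all lie in a single $K_j$. Concretely, for $X_i=Y_i=\B^1_{K_0}$ and $K_h=\Q_p(\zeta_{p^h})$, the endomorphism $t\mapsto\sum_{n\geq0}\pi^n\zeta_{p^n}t^n$ of $\B^1_{\widehat K}$ lies in no $\Hom_{K_j}(\B^1,\B^1)$. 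The same objection applies to your justification of (i): the defining relations $f_j\in\widehat K\langle x\rangle$ do not have ``finitely many coefficients'', so one cannot simply choose them in some $K_i$; one needs an honest approximation argument (as in \cite[Lemma 1.1.52]{ayoub-rig}) to produce a model, and even then only locally.

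The paper does \emph{not} attempt to compare the sites directly. It first reduces, via the interpretation of $\LL a_{\tr}$ as a Bousfield localization inverting relative Frobenius, to proving the analogous statement for $\RigDA^{\eff}_{\et}$. It then shows that the right adjoint $\iota_*$ preserves both $\et$-local and $\B^1$-local weak equivalences (the latter because $\iota_*$ commutes with the explicit singular functor $\Sing^{\B^1}$), so derives trivially. Full faithfulness of $\LL\iota^*$ on generators is then obtained \emph{only after} $\B^1$-localization: the key input is that the map of cubical objects
\[
\Lambda\Hom_{2\text{-}\varinjlim}(Y_{K_i}\times\square^\bullet_{K_i},X_{K_i})\longrightarrow
\Lambda\Hom_{\widehat K}(Y_{\widehat K}\times\square^\bullet_{\widehat K},X_{\widehat K})
\]
becomes a quasi-isomorphism on total complexes, which is proved (following \cite[Proposition 4.2]{vezz-fw}) using the approximation Lemma~\ref{approxch} on algebraic separable elements. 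In other words, the missing idea in your argument is that a morphism over $\widehat K$ cannot be descended to some $K_j$ on the nose, but it \emph{can} be deformed, up to an explicit $\B^1$-homotopy, to one that descends; this is where the interval object does real work, and it is what Lemma~\ref{approxch} and the $\Sing^{\B^1}$ machinery are for.
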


\begin{proof}
	We only prove the statement for the effective category, for brevity. Also, we remark that  
	the adjunction $$\LL a_{\tr}\colon\RigDA^{\eff}_{\et}(\widehat{K},\Lambda)\rightleftarrows\RigDM^{\eff}_{\et}(\widehat{K},\Lambda)\colon\RR o_{\tr}$$%
	 induced by the  ``adding transfers'' operation can also be interpreted (by means of  \cite{vezz-DADM}) as a Bousfield localization over the set of relative Frobenius maps. Since Bousfield localizations preserve Quillen equivalences (\cite[Proposition 2.3]{hovey-sp}) it suffices to prove the following equivalence
	 	\[
	 \RigDA^{\eff}_{\et}(\widehat{K},\Lambda)\cong \RigDA^{\eff}_{\et}(2-\varinjlim_i\RigSm/{K}_i,\Lambda)
	 \]
	 where the category on the right is constructed by $(\et,\B^1)$-localization out of the \'etale site  on the limit category.

	 First, we remark that motives $\Lambda(X)$  associated to smooth, quasi-compact varieties defined over some $K_h$  form a set of compact generators of  $\RigDA^{\eff}(\hat{K},\Lambda)$. Indeed, every smooth variety $X$ is locally \'etale over the ball $\B^n_{\hat{K}}$ and therefore it locally admits a model over some $K_h$ by density and \cite[Lemma 1.1.52]{ayoub-rig}.%
	
	For the same reason, any $\et$-covering of such an $X$ can be refined with a covering coming from $2-\varinjlim\RigSm/K_h$. We deduce that the functor $\iota_*$ coming from the following Quillen adjunction
	\[
	\iota^*\colon \Ch\Psh(2-\varinjlim\RigSm/K_i,\Lambda)\leftrightarrows
	\Ch\Psh(\RigSm/\hat{K},\Lambda)\colon \iota_*
	\]
	commutes with $\et$-sheafification and hence preserves $\tau_{\et}$-weak equivalences. 
	
	We now consider $\B^1$-weak equivalences. We recall that the localization with respect to $\B^1$ has an explicit description, which is a generalization of (a cubical version of) Voevodsky's construction, see \cite[Appendix A]{ayoub-h1} and \cite[Section 3]{vezz-fw}.  It sends a presheaf $\mcF$ to $\Sing^{\B^1}\mcF$ which is the simple  complex associated to the cubical presheaf $\mcF(\cdot\times\square^\bullet)$ where $\square^n=\B^n$ and transition maps are defined by the usual projections, and by the $0$ and $1$ sections.  Since $\iota_*$ commutes with $\Sing^{\B^1}$ we deduce it also preserves $\B^1$-weak equivalences and hence it derives trivially.
	
	We now prove that $\LL\iota^*$ is fully faithful on a set of generators, say varieties $X$ with an \'etale map over a poly-disc $\B^n$ defined over some $K_h$. From Lemma \ref{approxch} and following exactly the same proof as \cite[Proposition 4.2]{vezz-fw} or \cite[Proposition 4.22]{vezz-rigidreal}  we obtain that for any smooth affinoid varieties $X,Y$   which are \'etale over some poly-discs over $K$, the canonical map between the following cubical $\Lambda$-modules %
	\[
	\Lambda\Hom_{2-\varinjlim\RigSm/K_i}(Y_{K_i}\times\square_{{K_i}}^\bullet,X_{K_i}){\ra} \Lambda\Hom_{\RigSm/\hat{K}}(Y_{\hat{K}}\times\square_{\hat{K}}^\bullet,X_{\hat{K}})
	\]
	induces a quasi-isomorphism on the associated simple complexes. 
	In other words, we obtain an isomorphism in the derived category $\Sing^{\B^1}\iota_*\LL\iota^*\Lambda(X)\cong\Sing^{\B^1}\Lambda(X)$ which implies $\iota_*\LL\iota^*\Lambda(X)\cong\Lambda(X)$. It follows that $(\iota^*,\iota_*)$ is a Quillen equivalence. We conclude that $\RigDA^{\eff}_{\et}(2-\varinjlim\RigSm/K_i,\Lambda)\cong\RigDA_{\et}^{\eff}(\hat{K},\Lambda)$. %
\end{proof}

\begin{lemma}\label{approxch}
	Under the same hypotheses of Lemma \ref{2lim}, we let  $X=\Spa R$ be a smooth affinoid variety over $K$ and we denote $R_h\colonequals R\otimes K_h$. Any element $\xi$ of $R\widehat{\otimes}\hat{K}$ which is algebraic separable over each generic point of  $\Spec\varinjlim R_h$ lies in $\varinjlim R_h$.
\end{lemma}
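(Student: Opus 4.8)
The plan is to reduce the statement to an application of Hensel's lemma. The phenomenon behind it is the affinoid analogue of a classical fact: the field $\varinjlim_h K_h$, being a filtered union of the \emph{complete} (hence Henselian) valued fields $K_h$, is itself Henselian, and is therefore relatively separably algebraically closed in its completion $\widehat{K}$. We are after exactly this, one dimension up, with $\varinjlim_h R_h$ --- a filtered union of the complete affinoid algebras $R_h=R\otimes_K K_h$ (the extensions $K_h/K$ being finite in the cases at hand, so that $R\otimes_K K_h=R\widehat\otimes_K K_h$) --- playing the role of $\varinjlim_h K_h$, and $\widehat R:=R\widehat\otimes_K\widehat K$ playing the role of $\widehat{K}$.

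First I would reduce to the case where $R$, hence each $R_h$, hence $\varinjlim_h R_h$ and $\widehat R$, is a normal domain. Indeed $R$ is smooth, hence a finite product of normal domains, and base change along any field extension preserves regularity; a connected regular affinoid is irreducible, so a normal domain. (If $\Spec R$ is geometrically disconnected one gets several generic points, which only costs some routine bookkeeping below.) Then $\Frac(\varinjlim_h R_h)$ is a field, $\xi$ is separably algebraic over it, and I let $P\in\Frac(\varinjlim_h R_h)[T]$ be its monic minimal polynomial. Choosing $0\ne b\in R_{h_1}$ (for a suitable $h_1$) clearing all denominators of the coefficients of $P$, the element $\eta:=b\,\xi$ is a root of the monic polynomial $Q(T):=b^{\deg P}P(T/b)\in R_{h_1}[T]$, and $Q'(\eta)=b^{\deg P-1}P'(\xi)$ is nonzero because $P$ is separable --- hence, $\widehat R$ being a domain, $Q'(\eta)$ has strictly positive spectral seminorm.

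The heart of the argument is then Hensel's lemma together with the density of $\varinjlim_h R_h$ in $\widehat R$. Pick $\sigma_n\in\varinjlim_h R_h$ with $\sigma_n\to\eta$, say $\sigma_n\in R_{h_n}$. Since $\|Q(\sigma_n)\|\to 0$ while $\|Q'(\sigma_n)\|\to\|Q'(\eta)\|>0$, for $n\gg 0$ one has $\|Q(\sigma_n)\|<\|Q'(\sigma_n)\|^{2}$; crucially, this inequality already holds inside the \emph{complete} affinoid algebra $R_{h_n}$, because $R_{h_n}\hookrightarrow\widehat R$ is isometric for the spectral norms ($\Spa\widehat R\to\Spa R_{h_n}$ being surjective). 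Hensel's lemma in $R_{h_n}$ then produces a unique root $\eta_n\in R_{h_n}$ of $Q$ with $\|\eta_n-\sigma_n\|<\|Q'(\sigma_n)\|$; its image in $\widehat R$ is a root of $Q$ obeying the same estimate, hence equals $\eta$ by the uniqueness part of Hensel's lemma over $\widehat R$. Thus $\eta\in R_{h_n}\subseteq\varinjlim_h R_h$, and since $\xi=\eta/b\in\Frac(\varinjlim_h R_h)\cap\widehat R$ with both rings normal, one concludes $\xi\in\varinjlim_h R_h$, using that $\varinjlim_h R_h\to\widehat R$ is flat, so that every height-one valuation of $\varinjlim_h R_h$ is induced from one of $\widehat R$.

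The main obstacle, I expect, is not any single estimate but the structural groundwork: that $\varinjlim_h R_h$ genuinely behaves as an increasing union of complete affinoid algebras, and that the facts invoked about these non-Noetherian colimit rings --- normality, flatness of $\varinjlim_h R_h\to\widehat R$, the comparison of spectral norms under complete base change --- together with the reduction to the irreducible case, all hold. These are precisely the places where the innocuous analogy ``$\varinjlim_h K_h$ is Henselian in $\widehat{K}$'' has to be upgraded into an actual argument in the affinoid setting.
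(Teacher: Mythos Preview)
Your approach via Hensel's lemma is different from the paper's (which runs a Krasner-type argument on conjugates), and the overall strategy is natural, but there is a real gap at the Hensel step. The inequality $\|Q(\sigma_n)\|_{\sup}<\|Q'(\sigma_n)\|_{\sup}^{2}$ in the affinoid algebra $R_{h_n}$ does \emph{not} suffice to produce a root: Newton's iteration $x\mapsto x-Q(x)/Q'(x)$ only makes sense when $Q'(\sigma_n)$ is a \emph{unit} of $R_{h_n}$, and in an affinoid algebra ``nonzero'' (equivalently, positive sup-norm in a reduced domain) is strictly weaker than ``invertible''. You only know $Q'(\eta)=b^{\deg P-1}P'(\xi)\neq 0$ in the domain $\widehat R$, hence $\|Q'(\eta)\|_{\sup}>0$; but $b$ and $P'(\xi)$ may well vanish along a proper analytic subset of $\Spa\widehat R$, and over those points your sup-norm estimate gives no control. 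Equivalently, $\|Q(\sigma_n)\|_{\sup}<\|Q'(\sigma_n)\|_{\sup}^{2}$ does not imply the \emph{pointwise} inequality $|Q(\sigma_n)(x)|<|Q'(\sigma_n)(x)|^{2}$ that one would need to run Hensel fibre by fibre and then glue.

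The paper handles exactly this difficulty by first proving a localisation principle (its Step~1): a Schauder-basis argument shows that it is enough to check $\xi\in\varinjlim R_h$ after restricting to an arbitrary non-empty rational subdomain of $\Spa R$. After shrinking, one arranges that the sup-norm is multiplicative on $R$, $R[\xi]$, etc., and then a Krasner argument finishes: approximate $\xi$ by some $\beta\in\varinjlim R_h$ with $|\xi-\beta|<\min_i|\xi-\xi_i|$, and use that each $R$-isomorphism $R[\xi]\cong R[\xi_i]$ is a sup-norm isometry to obtain $|\xi-\xi_i|\leq|\xi-\beta|$, a contradiction. Your Hensel strategy can likely be salvaged with the same localisation step --- shrink until $Q'(\eta)$ becomes a unit, at which point Newton's method does converge --- but as written, without that reduction, the appeal to ``Hensel's lemma in $R_{h_n}$'' is not justified.
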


\begin{proof}
	We may and do assume that $X$ is geometrically connected. The proof is a simplified version of \cite[Proposition A.3]{vezz-fw} which we reproduce here briefly for the convenience of the reader. \\
	Step 1. We prove that we can restrict to an arbitrary non-empty rational subset $U$ of $\Spa R$. Fix a countable set $I$ of linearly independent elements in $R$ generating a dense subset, and complete it to a countable set $I\sqcup J$ of linearly independent elements in $R'\colonequals \mcO(U)$ generating a dense subset. By \cite[Proposition 2.7.1/3]{BGR} we can choose $I$ and $J$ in a way that the projection maps induce $K$-linear homeomorphisms  $R\cong\widehat{\bigoplus}_IK$ and $R' \cong\widehat{\bigoplus}_{I\sqcup J}K$.  Suppose now that $\xi$ lies in the intersection of $R'\cong\widehat{\bigoplus}_{I\sqcup J}K$ and $R\widehat{\otimes}\hat{K}\cong\widehat{\bigoplus}_{I} \hat{K}$ inside $R'\widehat{\otimes} \hat{K}\cong\widehat{\bigoplus}_{I\sqcup J}\hat{K}$. By the explicit description of the complete direct sum in terms of the direct product \cite[Proposition 2.1.5/7]{BGR} we deduce that $\xi$ lies in $\widehat{\bigoplus}_IK\cong R$ as wanted.
	
	Step 2. By the previous step we can make the following assumptions. The details are made explicit in \cite{vezz-fw}.
	\begin{enumerate}
		\item $\xi$ is algebraic over $R$.
		\item The sup-norm is multiplicative on $R$, $\varinjlim R_h$, $R\widehat{\otimes}\widehat{K}$, $R[\xi]$.
		\item  The sup-norm on $R[\xi]$ coincides with the norm induced by its inclusion in $R\widehat{\otimes}\widehat{K}$.
	\end{enumerate}
	Step 3. Let $\{\xi_i\}$ be the conjugates of $\xi$ different from $\xi$ in the separable closure $S$  of the completion of $\Frac R\widehat{\otimes}\hat{K}$ with respect to the sup-norm. We want to prove that this set is empty and we argue by contradiction. By a density argument and by the identity of norms of the previous step, we can fix an element $\beta$ in $\varinjlim R_h$ such that $|\xi-\beta|<\epsilon\colonequals\min\{|\xi_i-\xi|\}$ where we consider  the sup-norm in $S$. It restricts to the sup-norm on $R$ by \cite[Lemma 3.8.1/6]{BGR}. %
	Up to rescaling indexes, we can assume $\beta\in R$. 
	Any choice of an element $\xi_i$ induces a $R$-linear isomorphism $\tau_i\colon R[\xi]\cong R[\xi_i]$ which is an isometry with respect to the sup-norm. Therefore one has $$|\xi-\xi_i| \leq\max\{|\xi-\beta|, |\xi_i - \beta|\} = \max\{|\xi-\beta|, |\tau_i(\xi-\beta)|\} = |\xi-\beta| < \epsilon$$ leading to a contradiction. 
\end{proof}

\begin{lemma}\label{2lim2}Fix a directed system of strict inclusions of complete valued fields $K_i$. The canonical functors induce equivalences
	\[
	\RigDM^{\eff,\cp}( 2-\varinjlim\RigSm/{K_i},\Lambda)\cong 2-\varinjlim\RigDM^{\eff,\cp}({K}_i,\Lambda)
	\]
	\[
	\RigDM^{\cp}(2-\varinjlim\RigSm/{K_i},\Lambda)\cong 2-\varinjlim\RigDM^{\cp}({K}_i,\Lambda)
	\]
\end{lemma}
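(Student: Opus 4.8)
The statement asserts that the category of compact rigid analytic motives over the $2$-colimit site $2\textrm{-}\varinjlim\RigSm/K_i$ agrees with the $2$-colimit of the categories of compact rigid motives over the individual $K_i$. This is a purely formal statement about how triangulated categories of motives interact with filtered colimits of (geometric) sites, once one knows the basic structural properties of these categories, so the plan is to reduce everything to a continuity statement for the underlying model categories (or for the presentable $\infty$-categories, if one prefers that language) and the known behaviour of compact generators. I will only treat the effective case; the stable one is identical after inverting the Tate twist, since that operation commutes with filtered $2$-colimits.

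The first step is to observe that on each side the relevant categories are compactly generated, with an explicit set of compact generators: on the $K_i$-side these are the motives $\Lambda(X_i)$ of smooth quasi-compact (indeed affinoid) rigid varieties over $K_i$, by \cite[Proposition 1.2.34]{ayoub-rig}, and on the limit side these are the motives $\Lambda(X)$ with $X$ smooth quasi-compact over some $K_h$ — exactly the generators already used in the proof of Lemma \ref{2lim}, since by density and \cite[Lemma 1.1.52]{ayoub-rig} every smooth quasi-compact variety over the limit site descends to a finite level. Thus the full subcategory of compact objects is, on both sides, the idempotent completion of the thick triangulated subcategory generated by these generators. The second step is the heart of the matter: one must show that the canonical functors
\[
2\textrm{-}\varinjlim_i \RigDM^{\eff,\cp}(K_i,\Lambda)\;\longrightarrow\;\RigDM^{\eff,\cp}\bigl(2\textrm{-}\varinjlim_i\RigSm/K_i,\Lambda\bigr)
\]
are essentially surjective and fully faithful. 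Essential surjectivity is immediate from the description of generators above: every generator of the target is the image of a generator at some finite level, and the target's compact objects are generated (as a thick, idempotent-complete subcategory) by these, so they all lie in the image of the (triangulated, idempotent-complete) $2$-colimit. For full faithfulness one must compute $\Hom$-groups: given $X_h, Y_h$ smooth affinoid over some $K_h$, one needs
\[
\varinjlim_{i\geq h}\Hom_{\RigDM^{\eff}(K_i,\Lambda)}\bigl(\Lambda(X_{K_i}),\Lambda(Y_{K_i})[n]\bigr)\;\cong\;\Hom_{\RigDM^{\eff}(2\textrm{-}\varinjlim\RigSm/K_i,\Lambda)}\bigl(\Lambda(X),\Lambda(Y)[n]\bigr).
\]
This is where one invokes the explicit $\mathrm{Sing}^{\B^1}$-model for the $\B^1$-localisation together with the fact, already established in the proof of Lemma \ref{2lim} via Lemma \ref{approxch}, that the cubical $\Lambda$-modules of morphisms $\Lambda\Hom(Y_{K_i}\times\square^\bullet, X_{K_i})$ compute these $\Hom$-groups and their formation commutes with the filtered colimit in $i$ (the colimit of affinoid algebras being controlled by density, and $\mathrm{Sing}^{\B^1}$, étale sheafification, and filtered colimits all commuting with one another). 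Concretely: one shows $\iota_*$ commutes with $\et$-sheafification and with $\mathrm{Sing}^{\B^1}$, hence derives trivially, and then $\mathrm{Sing}^{\B^1}\iota_*\LL\iota^*\Lambda(X)\cong\mathrm{Sing}^{\B^1}\Lambda(X)$ in the colimit, exactly as in the proof of Lemma \ref{2lim}, giving full faithfulness on generators and hence on all compact objects.

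The main obstacle, as usual in these "continuity of motives" arguments, is the full-faithfulness / $\Hom$-computation step, and specifically the commutation of the $\B^1$-localised mapping complexes with the filtered colimit of base fields. This is precisely the content of Lemma \ref{approxch} (which guarantees that separably algebraic elements of $R\,\widehat{\otimes}\,\widehat K$ already live at a finite level), so the proof of the present lemma really amounts to packaging that approximation result together with the compact-generation statements; there is no genuinely new difficulty beyond what was already surmounted for Lemma \ref{2lim}, but one does need to be careful that passing from the $\B^1$-localised site without transfers to $\RigDM$ with transfers — which by \cite{vezz-DADM} is itself a further Bousfield localisation over the relative Frobenius maps — is compatible with the filtered colimit, which it is because Bousfield localisations preserve Quillen equivalences \cite[Proposition 2.3]{hovey-sp} and the relative Frobenius maps at the limit level all come from finite levels. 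Assembling these pieces yields both displayed equivalences.
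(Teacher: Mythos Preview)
Your reduction to the $\Hom$-isomorphism on generators is correct and matches the paper's approach exactly. However, your proposed argument for that isomorphism is misdirected: Lemma~\ref{approxch} and the quasi-isomorphism of cubical complexes extracted from the proof of Lemma~\ref{2lim} compare the $2$-colimit site $2\text{-}\varinjlim\RigSm/K_i$ with the site $\RigSm/\widehat{K}$ over the \emph{completion}; they say nothing about comparing the motivic category over the $2$-colimit site to the $2$-colimit of the motivic categories over the individual $K_i$, which is what Lemma~\ref{2lim2} asserts (and indeed no $\widehat{K}$ appears in its statement). The identity ``$\Sing^{\B^1}\iota_*\LL\iota^*\Lambda(X)\cong\Sing^{\B^1}\Lambda(X)$'' that you quote concerns the adjunction $(\iota^*,\iota_*)$ between the $2$-colimit site and $\RigSm/\widehat{K}$, not the transition maps $\RigSm/K_i\to 2\text{-}\varinjlim\RigSm/K_j$; it cannot simply be transplanted.

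What is actually needed is a \emph{continuity} statement: that the $(\et,\B^1)$-localisation and the passage to transfers commute with the filtered $2$-colimit of the underlying sites. The paper does not unwind this via $\Sing^{\B^1}$; instead it invokes the general continuity result \cite[Proposition~1.A.1]{ayoub-rig}, adapted to the rigid-with-transfers setting as in \cite[Proof of Proposition~2.5.52]{ayoub-rig}, which yields the required isomorphism
\[
\varinjlim_i\Hom_{\RigDM(K_i,\Lambda)}(\LL\iota_{0i}^*M,\LL\iota_{0i}^*N)\;\cong\;\Hom_{\RigDM(2\text{-}\varinjlim\RigSm/K_i,\Lambda)}(\LL\iota_0^*M,\LL\iota_0^*N)
\]
directly. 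Your parenthetical remark that $\Sing^{\B^1}$, \'etale sheafification and filtered colimits all commute is closer to the real content, and a hands-on proof along those lines is feasible; but it requires tracking the $(\et,\B^1)$-local model structures through the $2$-colimit of presheaf categories (this is essentially what \cite[Proposition~1.A.1]{ayoub-rig} does in the algebraic setting), and Lemma~\ref{approxch} plays no role in it.
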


\begin{proof}
	We let $\iota_{ij}$ be the base change functors $\RigSm/K_i\ra\RigSm/K_j$ induced by the arrows of the directed system, and $\iota_i$ be the one induced by $\RigSm/K_i\ra \varinjlim\RigSm/K_i$.
	
	Fix an index $0$ of the directed system. It suffices to prove that for any two compact motives $M,N$ in a set of generators for $\RigDM(K_0,\Lambda)$ the following isomorphism holds
	\[
	\varinjlim\Hom_{\RigDM(K_i,\Lambda)}(\LL\iota_{0i}^*M,\LL\iota^*_{0i}N)\cong\Hom_{\RigDM(\varinjlim\RigSm/K_i,\Lambda)}(\LL\iota_0^*M,\LL\iota_0^*N).
	\]	
	We can finally conclude by means of \cite[Proposition 1.A.1]{ayoub-rig} to be adapted to the rigid analytic setting with transfers (see \cite[Proof of Proposition 2.5.52]{ayoub-rig}).
\end{proof}

We finally show how to obtain the left vertical equivalence of the diagram \eqref{diagQp}. We will again use the main result of the previous sections.

\begin{prop}\label{grgr+}
Suppose $K$ satisfies Assumption \ref{assK} and let $\widehat{K}$ be a Galois completed perfection of $K$ constructed in Proposition \ref{sugiu}.  The base-change functor $(\widehat{K}/K)^*$ induces triangulated, monoidal equivalences:
	\[
	\RigDM^{\eff,\gr}(K,\Lambda)\cong\RigDM^{\eff,\gr}(\widehat{K},\Lambda)\quad \RigDM^{\gr}(K,\Lambda)\cong\RigDM^{\gr}(\widehat{K},\Lambda).
	\]
\end{prop}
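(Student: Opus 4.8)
The plan is to imitate the proof of Proposition~\ref{K=Kperf}, using finite Galois descent (available because $\Q\subset\Lambda$) in place of the insensitivity of rigid motives to purely inseparable extensions, and to pass to the limit with Lemmas~\ref{2lim} and~\ref{2lim2}. Recall from Proposition~\ref{sugiu} that $\widehat{K}$ is the completion of a Galois extension $\widehat{K}^{\mathrm{alg}}/K$ inducing purely inseparable extensions of residue fields; write $\widehat{K}^{\mathrm{alg}}=\varinjlim_h K_h$ as the union of a cofinal family of finite Galois subextensions, and set $G_h\colonequals\Gal(K_h/K)$, with $k_h$ the residue field of $K_h$. The key point is that $G_h$ acts trivially on the good-reduction subcategory $\RigDM^{\gr}(K_h,\Lambda)$: since $k_h/k$ is purely inseparable it has no nontrivial $k$-automorphisms, so $G_h$ acts trivially on $k_h$, and therefore --- through the equivalence $\RR(\cdot)_{\sigma*}\colon\FormDA^{\eff}(K_h^\circ,\Lambda)\cong\DA^{\eff}(k_h,\Lambda)$ (Theorem~\ref{dd}; the formal part holds over any complete valued field, cf.\ \cite{ayoub-rig}) and the functoriality of the generic-fibre functor $\LL(\cdot)_\eta$ --- the pullback along any $g\in G_h$ restricts to a functor isomorphic to the identity on $\RigDM^{\gr}(K_h,\Lambda)$. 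In particular $G_h$ acts trivially on every $\Hom$-group between objects of $\RigDM^{\gr}(K_h,\Lambda)$. This is exactly where the purely-inseparable-residue hypothesis of Proposition~\ref{sugiu} intervenes, and it is what distinguishes the good-reduction subcategory from the whole of $\RigDM(K,\Lambda)$.

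\emph{Full faithfulness.} Granting this, by Proposition~\ref{cpt2} it suffices to check that $(\widehat{K}/K)^*$ is fully faithful on compact objects $M,N$ of $\RigDM^{\eff,\gr}(K,\Lambda)$. Galois descent for $\Q$-linear rigid motives gives, for each $h$, a natural isomorphism $\Hom_{\RigDM^{\eff}(K,\Lambda)}(M,N)\cong\Hom_{\RigDM^{\eff}(K_h,\Lambda)}(M_{K_h},N_{K_h})^{G_h}$; by the previous paragraph the $G_h$-action is trivial, so base change is already an isomorphism at each finite level, all transition maps being base changes and hence equal to the identity of $\Hom_{\RigDM^{\eff}(K,\Lambda)}(M,N)$. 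Passing to the colimit over $h$ and invoking Lemmas~\ref{2lim} and~\ref{2lim2}, which identify $\RigDM^{\eff,\cp}(\widehat{K},\Lambda)$ with $2\text{-}\varinjlim_h\RigDM^{\eff,\cp}(K_h,\Lambda)$, we get
\[
\Hom_{\RigDM^{\eff}(\widehat{K},\Lambda)}(M_{\widehat{K}},N_{\widehat{K}})\;\cong\;\varinjlim_h\Hom_{\RigDM^{\eff}(K_h,\Lambda)}(M_{K_h},N_{K_h})\;\cong\;\Hom_{\RigDM^{\eff}(K,\Lambda)}(M,N).
\]
Repeating this with all shifts shows that $(\widehat{K}/K)^*$ is fully faithful on compact objects, hence --- being a left adjoint and so coproduct-preserving --- on all of $\RigDM^{\eff,\gr}(K,\Lambda)$.

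\emph{Essential surjectivity and conclusion.} For surjectivity on objects I would observe that the base change $\FormDA^{\eff}(K^\circ,\Lambda)\to\FormDA^{\eff}(\widehat{K}^\circ,\Lambda)$ corresponds, under the equivalences $\RR(\cdot)_{\sigma*}$, to pullback along the purely inseparable residue extension $k\to k^{\Perf}$, which is an equivalence since $\Q\subset\Lambda$; as this base change commutes with $\LL a_{\tr}\circ\LL(\cdot)_\eta$, the essential image of $(\widehat{K}/K)^*$ on $\RigDM^{\eff,\gr}$ contains all the generators $\Lambda_{\tr}(\mfX'_\eta)$ with $\mfX'$ a smooth affine formal scheme over $\widehat{K}^\circ$. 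A coproduct-preserving triangulated functor between compactly generated categories that is fully faithful and whose essential image contains a set of compact generators is an equivalence, giving $\RigDM^{\eff,\gr}(K,\Lambda)\cong\RigDM^{\eff,\gr}(\widehat{K},\Lambda)$; the stable statement follows likewise, $(\widehat{K}/K)^*$ being compatible with Tate twists. The main point to be verified with care is that Galois descent really holds for $\RigDM$ along finite Galois base changes --- which follows from étale descent for $\Q$-linear rigid motives --- and that the equivalences used restrict to the good-reduction subcategories; modulo this, the whole argument hinges on the triviality of the Galois action established in the first paragraph, and that is the step I would regard as the crux.
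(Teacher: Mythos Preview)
Your overall architecture (finite Galois descent at each level, then pass to the limit via Lemmas~\ref{2lim} and~\ref{2lim2}) matches the paper's, but the step you flag as the crux has a genuine gap. From the fact that $g\in G_h$ acts trivially on $k_h$ you correctly deduce that $g^*$ is isomorphic to the identity on $\FormDA^{\eff}(K_h^\circ,\Lambda)\cong\DA^{\eff}(k_h,\Lambda)$, and hence that $g^*\circ\LL(\cdot)_\eta^*\cong\LL(\cdot)_\eta^*$. But this only pins down $g^*$ on \emph{objects} (and on morphisms coming from the formal category): since $\LL a_{\tr}\circ\LL(\cdot)_\eta^*\colon\FormDA^{\eff}(K_h^\circ,\Lambda)\to\RigDM^{\eff,\gr}(K_h,\Lambda)$ is essentially surjective on generators but \emph{not full}, a natural isomorphism $g^*\circ\xi\cong\xi$ does not extend to a natural isomorphism $g^*\cong\id$ on $\RigDM^{\gr}(K_h,\Lambda)$. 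Concretely, the failure of fullness is exactly measured by the fact that $\chi\Lambda\not\cong\Lambda$; a morphism $f\colon\xi M\to\xi N$ not in the image of $\xi$ has no reason to satisfy $g^*(f)=f$ under the identifications you set up. So ``$G_h$ acts trivially on every $\Hom$-group'' does not follow from your argument.

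The paper closes precisely this gap, and does so by importing the results of Section~\ref{secmain} together with Ayoub's equi-characteristic theorem. It first uses that compact objects of $\RigDM^{\gr}(K,\Lambda)$ are \emph{dualizable} (via the monoidal functor $\DM(k,\Lambda)\to\RigDM(K,\Lambda)$) to reduce full faithfulness to the single computation of $\Hom_{\RigDM(\widehat{K})}(\xi M_{\widehat{K}^\circ},\Lambda)$. By adjunction this is $\Hom_{\DM(\widehat{k})}(M_{\widehat{k}},\chi\Lambda)$, and the key input is an explicit identification $\chi\Lambda\cong\Lambda\oplus\Lambda(-1)[-2]$ in $\DM(\widehat{k},\Lambda)$: this is obtained by transporting the computation of Remark~\ref{chi1} (valid over $K^\flat$ by Theorem~\ref{ayoubgr}) through the equivalences $\RigDM^{\gr}(\widehat{K})\cong\RigDM^{\gr}(\widehat{K}^\flat)\cong\RigDM^{\gr}(K^\flat)$ of Propositions~\ref{tiltgr} and~\ref{K=Kperf}. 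Only then does the Galois action visibly reduce to the action on $M_{\widehat{k}}$, which is trivial because $\widehat{k}/k$ is purely inseparable. In short, the triviality of the Galois action on good-reduction Hom-groups is not a formal consequence of the special-fibre equivalence; it requires the main theorem of the paper and Ayoub's Theorem~\ref{ayoubgr} as input.
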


\begin{proof}
	We only prove the stable version of the statement for brevity. The category $\RigDM^{\gr}(K,\Lambda)$ is generated, as a triangulated category with small sums, by the motives associated to smooth affinoid varieties of good reduction, which are compact. In particular, we deduce that it suffices to show that the subcategories of compact objects (see Proposition \ref{cpt2}) $\RigDM^{\cp,\gr}(\widehat{K},\Lambda)$ and $\RigDM^{\cp,\gr}(K,\Lambda)$ are equivalent. %
	
	Suppose that $\widehat{K}$ is the completion of a Galois extension $L/K$ inducing a purely inseparable extension of residue fields. We can write $L$ as a union $\varinjlim K_h$ of finite algebraic extensions $K_h/K$ in a fixed algebraic closure containing $\widehat{K}$. %
	We denote by $\widehat{k}$ the residue field of $\widehat{K}$ which is the union of the residue fields $k_h$ of each $K_h$.
	
	Let $\mfX$ be a formal scheme over $\widehat{K}^\circ$ which is \'etale over $\mfB^n$. From  \cite[Theorem 18.1.2]{EGAIV4} and the equality $\widehat{k}\cong\varinjlim k_h$ we deduce the following equivalences of categories:
	\[
	\Et\Aff/\mfB^n_{\widehat{K}^\circ}\cong \Et\Aff/\A^n_{\widehat{k}}\cong 2-\varinjlim_h  \Et\Aff/\A^n_{{k}_h}\cong 2-\varinjlim_h \Et\Aff/\mfB^n_{{K^\circ_h}}
	\]
	so that $\mfX$ has a model over some $K^\circ_h$. In particular, we obtain that $\Lambda(\mfX_\eta)(n)$ lies in the triangulated category generated by the image of $2-\varinjlim_h\RigDM^{\cp,\gr}(K_h,\Lambda)$.  On the other hand, the set of motives of the form $\Lambda_{\tr}(\mfX_\eta)(n)$ generate the category $\RigDM^{\cp,\gr}(\widehat{K},\Lambda)$. We then deduce from Lemmas \ref{2lim} and \ref{2lim2} the following equivalence:
	\[
	2-\varinjlim_h\RigDM^{\cp,\gr}(K_h,\Lambda)\cong \RigDM^{\cp,\gr}(\widehat{K},\Lambda).
	\]
	From the diagram 
		\begin{equation} %
			\xymatrix{
				\DM(k)\cong\DA(k)\ar^{\sim}[d]&\FormDA(K^\circ)\ar[l]_-{\sim}\ar[r]^-{\LL (\cdot)_\eta}\ar[d]^{(K_h^\circ/K^\circ)^*}&\RigDA(K)\ar[r]^-{\LL a_{\tr}}&\RigDM(K)\ar[d]^{(K_h/K)^*}\\
			\DM({k_h})\cong	\DA({k_h})&\FormDA({K_h}^\circ)\ar[l]_-{\sim}\ar[r]^-{\LL (\cdot)_\eta}&\RigDA({K_h})\ar[r]^-{\LL a_{\tr}}&\RigDM({K_h})\\
			}
			\end{equation} 
			we deduce that each functor $\RigDM^{\cp,\gr}(K,\Lambda)\ra\RigDM^{\cp,\gr}(K_h,\Lambda)$ sends a set of generators to a set of generators. From the previous equivalence, we then conclude that the functor $\RigDM^{\cp,\gr}(K,\Lambda)\ra\RigDM^{\cp,\gr}(\widehat{K},\Lambda)$ also sends a set of generators to a set of generators. We now prove it is also fully faithful. To this aim, we first remark that  all compact objects in $\RigDM^{\gr}(K,\Lambda)$ are dualizable. 
	Indeed, by considering the following triangulated, monoidal functor preserving compact objects (the first equivalence follows from \cite[Appendix B]{ayoub-etale}) $$\DM(k,\Lambda)\cong\DA(k,\Lambda)\stackrel{\xi}{\ra}\RigDA(K,\Lambda)\stackrel{\LL a_{\tr}}{\ra}\RigDM(K,\Lambda)$$
	it suffices to remark that compact objects of the first category are dualizable (see for example \cite[Lecture 20]{mvw}).
	
	 We are then left to prove that $$\Hom_{\RigDM(K)}(\LL(\cdot)_\eta^*M,\Lambda)\cong \Hom_{\RigDM(\widehat{K})}(\LL(\cdot)_\eta^*M_{\widehat{K}^\circ},\Lambda)$$
	for each $M$ in $\FormDA^{\cp}(K^\circ)$. From Lemma \ref{2lim} the right hand side can be computed in the category $\RigDM(\varinjlim \RigSm/K_h)$ and by Galois descent we deduce 
	$$\Hom_{\RigDM(K)}(\LL(\cdot)_\eta^*M,\Lambda)\cong \Hom_{\RigDM(\widehat{K})}(\LL(\cdot)_\eta^*M_{\widehat{K}^\circ},\Lambda)^{\Gal(\widehat{K}/K)}.$$
	We now prove that $\Gal(\widehat{K}/K)$ acts trivially on $$\Hom_{\RigDM(\widehat{K})}(\LL(\cdot)_\eta^*M_{\widehat{K}^\circ},\Lambda)\cong \Hom_{\DM(\widehat{k})}(\LL(\cdot)_{\sigma}^*M_{\widehat{K}^\circ},\chi\Lambda).$$

	Let $K^\flat$ be the field constructed in Proposition \ref{sugiu}. By Propositions \ref{tiltgr} and Remarks \ref{chi} and \ref{chi1} we deduce that $\chi\Lambda\cong\Lambda\oplus\Lambda(-1)[-2]$ so that $$\Hom_{\DM(\widehat{k})}(\LL(\cdot)_{\sigma}^*M_{\widehat{K}^\circ},\chi\Lambda)\cong\Hom_{\DM(\widehat{k})}(M_{\widehat{k}},\Lambda\oplus\Lambda(-1)[-2])$$
	with  Galois action induced by the action on $M_{\widehat{k}}$. Since the extension $\widehat{k}/k$ is totally unseparable, we have $\DM(k)\cong\DM(\widehat{k})$ and the Galois action is therefore trivial,  proving our claim.
\end{proof}

We can then finally state and prove Ayoub's conjecture in the following form.

\begin{thm}\label{ayoubgr+}
Let $K$ be a field satisfying Assumption \ref{assK} and $\Lambda$ be a $\Q$-algebra.
\begin{enumerate}
\item Let  $\widehat{K}$ and $K^\flat$ be as in the statement of Proposition \ref{sugiu}. The canonical functors induce triangulated, monoidal equivalences:
\[
\RigDM^{\eff,\gr}(K,\Lambda)\cong\RigDM^{\eff,\gr}(\widehat{K},\Lambda)\cong\RigDM^{\eff,\gr}({K^\flat},\Lambda)
\]
\[
\RigDM^{\gr}(K,\Lambda)\cong\RigDM^{\gr}(\widehat{K},\Lambda)\cong\RigDM^{\gr}({K^\flat},\Lambda)
\]
\item Let $k$ be the residue field of $K$. There are  equivalences of categories:
\[
\Un\DM^{\eff}(k,\Lambda)\cong\RigDM^{\eff,\gr}(K,\Lambda)\qquad \Un\DM(k,\Lambda)\cong\RigDM^{\gr}({K},\Lambda)\
\]
induced by a choice of elements $\pi_i^\flat$ as in Proposition \ref{sugiu}.
\end{enumerate}
\end{thm}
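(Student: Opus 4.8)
The plan is to deduce both parts of the statement by splicing together equivalences that are already in hand; no genuinely new argument should be needed at this stage.

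For part (1), I would chain the three equivalences furnished by Propositions \ref{grgr+}, \ref{tiltgr} and \ref{K=Kperf}. Starting from $K$ as in Assumption \ref{assK}, fix the Galois completed perfection $\widehat{K}$ and the equal-characteristic field $K^\flat$ produced by Proposition \ref{sugiu}. Proposition \ref{grgr+} gives $\RigDM^{\eff,\gr}(K,\Lambda)\cong\RigDM^{\eff,\gr}(\widehat{K},\Lambda)$ through the base change $(\widehat{K}/K)^*$; Proposition \ref{tiltgr} gives $\RigDM^{\eff,\gr}(\widehat{K},\Lambda)\cong\RigDM^{\eff,\gr}(\widehat{K}^\flat,\Lambda)$ by restricting the motivic tilting equivalence of Theorem \ref{mottilteq}, where $\widehat{K}^\flat$ is the tilt of $\widehat{K}$ and, by Proposition \ref{sugiu}(ii), the completed perfection of $K^\flat$; and Proposition \ref{K=Kperf}, applied to the characteristic-$p$ field $K^\flat$ and its completed perfection $\widehat{K}^\flat$, gives $\RigDM^{\eff,\gr}(K^\flat,\Lambda)\cong\RigDM^{\eff,\gr}(\widehat{K}^\flat,\Lambda)$. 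Composing these three monoidal triangulated equivalences proves part (1); since each of the three propositions also provides the stable analogue, the stable statement follows identically.

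For part (2), the plan is to observe that the field $K^\flat$ delivered by Proposition \ref{sugiu} is precisely of the kind to which Ayoub's equal-characteristic description (Theorem \ref{ayoubgr}) applies. Indeed, by Proposition \ref{sugiu}(iii), $K^\flat$ is a complete valued field of equal characteristic $p$ with residue field $k$ and value group free of rank $n$ over $\Z$; the inclusion $k\subset K^{\flat\circ}$ supplies a section of the reduction; and by Proposition \ref{sugiu}(iv) the chosen elements $\pi_1^\flat,\ldots,\pi_n^\flat$ have values generating $|K^{\flat\times}|$ and make $k[(\pi_1^\flat)^{\pm1},\ldots,(\pi_n^\flat)^{\pm1}]$ dense in $K^\flat$. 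Theorem \ref{ayoubgr} then yields monoidal triangulated equivalences $\Un\DM^{\eff}(k,\Lambda)\cong\RigDM^{\eff,\gr}(K^\flat,\Lambda)$ and $\Un\DM(k,\Lambda)\cong\RigDM^{\gr}(K^\flat,\Lambda)$, induced by $\Rig^*\circ(\pi_1^\flat,\ldots,\pi_n^\flat)^*$; composing with the equivalences of part (1) gives the advertised description of $\RigDM^{\eff,\gr}(K,\Lambda)$ and $\RigDM^{\gr}(K,\Lambda)$ in terms of unipotent motives over $k$.

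Since all the genuine difficulties have already been isolated in the preparatory results — the construction of the auxiliary fields and their compatible root towers in Proposition \ref{sugiu}, the good-reduction refinement of the tilting equivalence in Proposition \ref{tiltgr}, and the descent along (Galois) completed perfections in Propositions \ref{K=Kperf} and \ref{grgr+} — I do not expect a real obstacle at this final stage. The only things that need care are, first, threading the hypotheses of Assumption \ref{assK}, Proposition \ref{sugiu} and Theorem \ref{ayoubgr} through consistently, so that the $K^\flat$ and the elements $\pi_i^\flat$ handed to us are exactly the data Theorem \ref{ayoubgr} demands; second, checking that each intermediate functor is monoidal and triangulated so the composite is too; and third, reducing the stable statement to the effective one by testing on Tate twists of effective motives, exactly as in the proof of Theorem \ref{main}.
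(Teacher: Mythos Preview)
Your proposal is correct and follows essentially the same route as the paper: combine Propositions \ref{grgr+}, \ref{tiltgr} and \ref{K=Kperf} to get part (1), then invoke Theorem \ref{ayoubgr} for $K^\flat$ and compose with part (1) to get part (2). The only minor deviation is that the paper treats the effective and stable cases as parallel (``analogous'') rather than reducing one to the other via Tate twists, but this is inessential.
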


\begin{proof}
The effective and the stable case are analogous, and we only consider here the latter. From our assumptions on the fields $\widehat{K}$ and $K^\flat$ (see Proposition \ref{sugiu})  Propositions \ref{K=Kperf} and \ref{grgr+} show that there exist canonical equivalences $\RigDM^{\gr}(K,\Lambda)\cong\RigDM^{\gr}(\widehat{K},\Lambda)$ and $\RigDM^{\gr}(K^\flat,\Lambda)\cong\RigDM^{\gr}(\widehat{K}^\flat,\Lambda)$. If we combine them with the tilting equivalence of Proposition \ref{tiltgr} we obtain the first claim.

The equivalence of the second claim follows  by combining the first point with  the equivalence of Ayoub $
\Un\DM(k,\Lambda)\cong\RigDM^{\gr}(K^\flat,\Lambda)$ (see Theorem \ref{ayoubgr}) induced by a choice of elements in $K^\flat$ generating its value  group.
\end{proof}

\section*{Acknowledgements}
This paper was prepared during my stay as a postdoctoral researcher at the Institut de Math\'ematiques de Jussieu-Paris Rive Gauche, financed by the Fondation Sciences Math\'ematiques de Paris. I thank these institutions for their support. I also thank Bernard Le Stum and an anonimous referee for their helpful remarks.

  \bibliographystyle{plain}

 \end{document}